\documentclass[11pt,reqno,a4paper]{amsart}
\usepackage[latin1]{inputenc}
\usepackage[english]{babel}
\usepackage[T1]{fontenc}
\usepackage{amsmath,amsthm,amssymb, color,graphicx,bm,mathtools}
\usepackage{amsmath} 
\usepackage{amsfonts}
\usepackage{amssymb}
\usepackage{amsthm}
\usepackage{dsfont}
\usepackage{stmaryrd}
\usepackage{tikz}
\usepackage{todonotes}
\usetikzlibrary{matrix,arrows.meta}
\usepackage[title,titletoc,toc]{appendix}
\usepackage[colorlinks=true,linkcolor=blue]{hyperref}
\providecommand{\doi}[1]{{\hypersetup{urlcolor=black}\href{#1}{#1}}}
\usepackage[normalem]{ulem}

\allowdisplaybreaks

\usepackage{xcolor}

\usepackage[a4paper,top=2.4cm,bottom=3cm,left=2.5cm,right=2.4cm]{geometry}

\theoremstyle{plain}
\newtheorem{theorem}{Theorem}[section]
\newtheorem{lemma}[theorem]{Lemma}

\theoremstyle{definition} 
\newtheorem{remark}[theorem]{Remark}
\newtheorem{definition}[theorem]{Definition}

\newcommand{\R}{\mathbb{R}}

\renewcommand{\d}[1]{\ensuremath{\operatorname{d}\!{#1}}}

\begin{document}

\author{Luca Di Persio}
\address[L.\,Di Persio]{Dipartimento di Informatica, Universit\`a degli Studi di Verona, Strada le Grazie 15, 37134 Verona, Italy}
\email{luca.dipersio@univr.it}
\author{Riccardo Molinarolo}
\address[R.\,Molinarolo]{Dipartimento di Informatica, Universit\`a degli Studi di Verona, Strada le Grazie 15, 37134 Verona, Italy}
\email{riccardo.molinarolo@univr.it}

\title[]{Shape analysis in Schauder spaces of the energy of heat problems in perturbed annular domains}

\date{\today}

\maketitle

\noindent
{\textbf{Abstract:}} This paper is devoted to the shape analysis of the energy of a caloric family of Schauder functions defined on a bounded perforated domain $\Omega^o \setminus \overline{\Omega^i[\phi]}$ of $\R^n$, where the outer boundary is fixed, and the inner boundary is obtained by a $C^{1,\alpha}$-perturbation $\phi$ of the boundary of a reference cavity $\Omega^i$. Without imposing any boundary conditions, we prove that in a suitable neighborhood of the identity $\phi_0$, the domain-to-energy map is of class $C^{\infty}$. The proof is based on the construction of a global diffeomorphism, smoothly depending on $\phi$, from the reference annulus onto the perturbed one and on suitable regularity and smoothness assumptions on the pull-back family onto the reference domain.

We then apply our main result to two boundary value problems: a nonlinear mixed Robin-type problem and a linear Dirichlet problem. After recalling some known existence and shape analysis results for the solutions, we prove that the corresponding domain-to-energy map is of class $C^{\infty}$. The proof is based on a decomposition of the fixed domain into near, intermediate, and far regions relative to the cavity, and on the smooth dependence of the layer heat potentials upon support perturbations.

\vspace{9pt}

\noindent
{\textbf{Keywords:}} Heat equation, Shape perturbation, Layer heat potentials, Shape sensitivity of the solution, Shape analysis of the energy. 
\vspace{9pt}

\noindent   
{\textbf{2020 Mathematics Subject Classification:}}  35K20; 31B10; 35B20;  45A05.

\tableofcontents

\section{Introduction} 
The present work lies at the intersection of parabolic potential theory, shape sensitivity analysis, and boundary integral methods for time-dependent problems. For the general theory of parabolic equations and classical Schauder regularity, we refer the reader to the classical monographs by Lady\v{z}enskaja, Solonnikov, and Ural'tseva \cite{LaSoUr68}, Friedman \cite{Friedman2008}, Lieberman \cite{Li96}, and Lunardi \cite{Lu95}.

The paper aims to study the dependence of the energy of caloric Schauder functions in bounded perforated domains upon perturbation of the internal boundary. 

Generally speaking, understanding how certain quantities depend on the shape of the domain is a fundamental quest with significant relevance in many real-world scenarios, especially when one searches for optimal shapes or configurations. In mathematical terms, this leads to the well-established field of \textit{Shape Optimization}, for which we refer the reader to the classical monographs by Henrot and Pierre \cite{HePi18} and by Soko{\l}owski and Zolesio \cite{SoZo92}, along with the references therein.

From a mathematical perspective, the analysis of the shape optimization problem is often closely related to studying how the solutions - or other related quantities - to some boundary value problems are affected by the perturbation of the domain of definition. In particular, this requires the understanding of the regularity of the map that associates the shape of the domain to the solution, or to other related quantities, of the problems.
Continuity gives stability under small perturbations, differentiability yields first-order optimality conditions, and higher regularity provides higher-order expansions and a robust functional analytic framework for perturbative arguments.

Many authors have used potential theoretic methods to study the perturbation sensitivity of solutions of boundary value problems, exploiting integral operators to transform the original problem into a system of boundary integral equations. We mention the classical monograph on integral equations by Kress \cite{Kr14}, as well as the systematic analysis of boundary integral operators for the heat equation by Brown \cite{Br89,Br90} and by Costabel \cite{Co90,Co04}. In particular, analyzing how layer potentials and, more generally, integral operators depend on perturbation parameters (such as densities or the support of integration) becomes a crucial first step.

This strategy has been performed for the sensitivity analysis of the solutions of boundary value problems, especially in the context of elliptic equations. We mention Potthast \cite{Pot94,Pot96} for the study of layer potential for the Helmholtz equation, Haddar and Kress \cite{HaKr04}, Kirsch \cite{Kir93}, Charalabopoulos \cite{Chara95}, Costabel and Le Louer \cite{CoLeLo12I,CoLeLo12II} for applications to scattering theory, fluid dynamics, and elasticity theory.  

Few results prove regularity beyond differentiability. Notably, exceptions are the works of Lanza de Cristoforis and collaborators. The method developed was called \textit{Functional Analytic Approach}, and it heavily relies on potential-theoretic techniques: for a detailed description of the method, see the monograph \cite{DaLaMuBook} and reference therein. This method has been used for both regular and singular perturbations and, generally speaking, it aims to prove smooth or even analytical results for the layer potential and for the domain-to-solution map. It has been extensively adopted: far from being exhaustive, we refer to the seminal papers \cite{La07, LaRo04,LaRo08} for the smoothness of classical layer potentials, and the more general case of elliptic differential operators studied in \cite{DaLa10}.

For the sake of completeness, we mention that, apart from the results stemming from Lanza de Cristoforis's work, it is possible to obtain analyticity results through \textit{Shape Holomorphy}. We refer to Henr\'iquez and Schwab \cite{HeSc21} for the study of the Calder\'on projector, to Pinto, Henr\'iquez, and Jerez-Hanckes \cite{PiHeJe24} and D\"olz and Henr\'iquez \cite{DoHe24} for boundary integral operators, and to Jerez-Hanckes, Schwab, and Zech \cite{JeScZe17} for the electromagnetic wave scattering problem.

As regards shape analysis for parabolic problems, we mention the theoretical results for parabolic shape optimization problems with time-independent shapes obtained by Soko{\l}owski \cite{So1988} (see also \cite{SoZo92, ElSo1996}). In particular, nowadays, shape calculus and shape optimization for parabolic problems are already well developed: in the context of inverse and identification problems for the
heat equation, relevant contributions include Bacchelli,
Di Cristo, Sincich and Vessella \cite{BaDiSiVe14}, Chapko, Kress, and Yoon \cite{ChKrYoo98,ChKrYoo99}, Hettlich and Rungell \cite{HeRu01}, and Nakamura and Wang \cite{NaWa15,NaWa17}. We also mention the monograph by Isakov \cite{Is98} on the topic.

The work of Harbrecht and Tausch \cite{HaTa2013} is especially relevant. They considered the numerical solution of a shape identification problem for the heat equation with the aim to determine the shape of a void or an inclusion of zero temperature from measurements of the temperature and the heat flux at the exterior boundary through different shape optimization problems (see also \cite{HaTa2011}).
The subsequent works of Br\"ugger, Harbrecht and Tausch \cite{BrHaTa2021,BrHaTa2022} address an additional level of complexity (that is not present in our paper), namely time-dependent shapes. In \cite{BrHaTa2021}, a time-dependent shape identification problem for the heat equation is formulated and treated by a gradient-based method. In \cite{BrHaTa2022}, the authors develop a functional analytic Sobolev framework for heat boundary integral equations on non-cylindrical space-time domains. 

More recently, Djurdjevac, Kaarnioja, Schillings and Zepernick \cite{DjKaScZe2026} study uncertainty quantification for stationary and time-dependent PDEs under Gevrey regular random domain deformations. In particular, the model adopted to describe the random domain follows the approach of Harbrecht, Schmidlin, and Schwab \cite{HaScSc2024}. 

Finally, a very recent work by Harbrecht and Schwab \cite{HaSc2026} provides error bounds for operator surrogates of solution operators for partial differential and boundary integral equations on families of
domains which are diffeomorphic to one common reference domain. Furthermore, they obtain constructive proofs of the existence of neural and spectral operator surrogates for the shape-to-solution maps with error bounds and convergence rate guarantees uniform on the collection of admissible shapes.

Within this broad research framework, the present paper provides an abstract result on the smoothness of the energy of a family of caloric functions on perforated domains under static and deterministic shape perturbations of the cavity. In particular, in our main result, we deal with classical space-time Schauder functions rather than Sobolev functions. 

There are several reasons for this choice. First, our work contributes to and further develops an established line of research. The closest antecedents of the present paper are the recent works \cite{DaLuMoMu24.2,DaLuMu25}, where shape analysis was carried out for a nonlinear mixed Robin boundary value problem for the heat equation and for the Dirichlet-to-Neumann map, respectively, in perturbed perforated domains. Related ideas were also developed for periodic transmission problems in \cite{Lu20,DaLuMoMu24}. All these results rely crucially on the sharp regularity results for the pullbacks of layer heat potentials established by Luzzini and Dalla Riva \cite{DaLu23} (see also \cite{LaLu17,LaLu19}). In all of these works, the analysis is carried out within the framework of Schauder spaces.

The novelty of the present manuscript is the shape analysis of the energy associated with a family of caloric functions in perturbed perforated domains. As we will describe below, in the second part of the paper, we propose some applications of our result to the analysis of the corresponding energy of solutions of two parabolic boundary value problems. For elliptic boundary value problems and for singular perturbation, the asymptotic behavior of the energy of solutions of a two-parameter homogenization problem was analyzed by Lanza de Cristoforis and Musolino \cite{LaMu19}. To the best of our knowledge, the shape analysis of the energy of perturbed parabolic problems by means of potential-theoretic techniques and in Schauder spaces is new.

\subsection{Main result}
In order to present our main result, we describe the geometric framework by fixing once and for all 
\begin{equation*}
    n \in \mathbb{N} \setminus \{0,1\},
\end{equation*}
which denotes the dimension of the Euclidean space $\mathbb{R}^n$. Then, we fix a regularity parameter and a final time, namely
\begin{equation*}
    \alpha \in ]0,1[ \text{ and } T>0,
\end{equation*}
and we consider two sets $\Omega^o$ and $\Omega^i$ that satisfy the following conditions:
\begin{equation}\label{introsetconditions}
    \begin{split}
        &\Omega^o, \Omega^i  \text{ are bounded open connected subsets of } \R^n \text{ of class } C^{1,\alpha}, 
        \\
        &\text{with connected exteriors } \R^n\setminus \overline{\Omega^o} \text{ and } \R^n \setminus \overline{\Omega^i} \text{ and } \overline{\Omega^i} \subseteq\Omega^o.
    \end{split}
\end{equation}

Then, we define the perforated domain obtained by removing from the fixed set $\Omega^o$ a perturbed copy of the set $\Omega^i$. More precisely, we will perturb the domain $\Omega^i$ by means of a diffeomorphism $\phi$ in the class
\begin{equation*}
    \mathcal{A}_{\partial\Omega^i} := \left\{ \phi \in C^{1,\alpha}(\partial\Omega^i,\R^n) : \phi \text{ injective}, \, \d \phi(y) \text{ injective for all } y \in \partial\Omega^i \right\}.
\end{equation*}

Now, it is well-known that $\phi(\partial\Omega^i)$ splits $\R^n$ into exactly two open connected components, one bounded, which we denote by $\Omega^i[\phi]$, and one unbounded, $\Omega^i[\phi]^- := \R^n \setminus \Omega^i[\phi]$ (see the explanation at the beginning of Subsection \ref{subsec: class of diffeo}). Moreover, in order to generate a proper perturbed perforated domain, we introduce the subclass
\begin{equation*}
    \mathcal{A}^{\Omega^o}_{\partial\Omega^i} := \left\{ \phi \in \mathcal{A}_{\partial\Omega^i} \colon  \overline{\Omega^i[\phi]} \subseteq \Omega^o \right\}.
\end{equation*}

Then, for $\phi \in \mathcal{A}^{\Omega^o}_{\partial\Omega^i}$, we consider the perturbed perforated domain $\Omega^o \setminus \overline{\Omega^i[\phi]}$ (see Figure \ref{fig: perturbed domains}). We observe that, by assumption \eqref{introsetconditions}, $\phi_0 := \mathrm{id}_{\partial\Omega^i} \in  \mathcal{A}^{\Omega^o}_{\partial\Omega^i}$ and $\Omega^o \setminus \overline{\Omega^i[\phi_0]} = \Omega^o \setminus \overline{\Omega^i}$.

\begin{figure}[ht!]
    \centering
    \begin{tikzpicture}

    \fill[gray!30]
      plot[smooth cycle, tension=0.8]
      coordinates {(0,0) (4,0.5) (5,2.5) (3.5,4) (1,3.5) (-0.5,2)};
    \draw[thick, black]
      plot[smooth cycle, tension=0.8]
      coordinates {(0,0) (4,0.5) (5,2.5) (3.5,4) (1,3.5) (-0.5,2)};
    
    \def\inner{
      (1,1) (3,1.2) (3.5,2.2) (2.5,3) (1.2,2.5)
    }
    \fill[white] plot[smooth cycle, tension=0.8] coordinates \inner;
    \draw[thick, black] plot[smooth cycle, tension=0.8] coordinates \inner;
    \node at (4.2,3) {\tiny$\Omega^o$};
    \node at (2.3,1.9) {\tiny$\Omega^i$};
    \node at (0.9,0.75) {\tiny$\partial\Omega^i$};
    \draw[->, thick, >={Stealth[scale=0.5]}] (5.5,2.2) to[bend left=30] node[midway, above] {\small$\phi$} (6.8,2.2);
    \begin{scope}[xshift=7.5cm]
        \fill[gray!30] plot[smooth cycle, tension=0.8] coordinates {(0,0) (4,0.5) (5,2.5) (3.5,4) (1,3.5) (-0.5,2)};
        \draw[thick, black] plot[smooth cycle, tension=0.8] coordinates {(0,0) (4,0.5) (5,2.5) (3.5,4) (1,3.5) (-0.5,2)};
        \def\inner{(1,1.1) (3.1,1.1) (3.4,2.3) (2.6,3.1) (1.2,2.5)}
        \fill[white] plot[smooth cycle, tension=0.8] coordinates \inner;
        \draw[thick, black] plot[smooth cycle, tension=0.8] coordinates \inner;
        \node at (4.2,3) {\tiny$\Omega^o$};
        \node at (2.3,1.9) {\tiny$\Omega^i[\phi]$};
        \node at (0.9,0.75) {\tiny$\phi(\partial\Omega^i)$};
    \end{scope}
    \end{tikzpicture}
    \caption{The domains $\Omega^o$, $\Omega^i$, the diffeomorphism $\phi$, the sets $\phi(\partial \Omega^i)$ and $\Omega^i[\phi]$.}
    \label{fig: perturbed domains}
\end{figure}

Lanza de Cristoforis and Rossi \cite{LaRo08} proved that every diffeomorphism $\phi \in \mathcal{A}^{\Omega^o}_{\partial\Omega^i}$ in a neighbourhood of $\phi_0$ can be extended in a suitably defined uniform neighborhood of $\partial\Omega^i$ by means of a real-analytic extension operator $\mathbf{E}$. Building upon that result, as a first step, we prove the existence of an open neighborhood $\mathcal{W}^\ast_0$ of $\phi_0$ in $\mathcal{A}^{\Omega^o}_{\partial\Omega^i}$ and of a map $\Psi$ such that, for every $\phi \in \mathcal{W}^\ast_0$, $\Psi[\phi]$ is a regular global diffeomorphism from $\overline{\Omega^o}\setminus\Omega^i$ onto $\overline{\Omega^o} \setminus \Omega^i[\phi]$, which coincides with the identity map in a neighborhood of $\partial\Omega^o$ and with $\mathbf{E}[\phi]$ in a neighborhood of $\partial\Omega^i$. More importantly, $\Psi[\phi]$ depends real analytically on $\phi$. We refer to Lemma \ref{lemma diffeo} in Section \ref{sec: abstract result} for the precise and technical statement of that result. With this notation in hand, we can state our main result.

\begin{theorem}\label{thm energy dependence abstract}
    Assume that there exist an open neighborhood $\widehat{\mathcal{W}}_0 \subseteq \mathcal{W}^\ast_0$ of $\phi_0$ in $\mathcal{A}^{\Omega^o}_{\partial\Omega^i}$ and a family of functions $\{ u_\phi \}_{\phi \in \widehat{\mathcal{W}}_0}$ such that the following holds:
    \begin{itemize}
        \item[(i)] for all $\phi \in \widehat{\mathcal{W}}_0$, $u_\phi \in C_{0}^{\frac{1+\alpha}{2}; 1+\alpha}([0,T] \times (\overline{\Omega^o} \setminus \Omega^i[\phi] ))$ and satisfy
        \begin{equation}\label{princeqpertu family}
            \partial_t u - \Delta u = 0  \quad \text{in } ]0,T] \times (\Omega^o \setminus \overline{\Omega^i[\phi]});
        \end{equation}

        \item[(ii)] the map from $\widehat{\mathcal{W}}_0$ to $C_{0}^{\frac{1+\alpha}{2}; 1+\alpha}([0,T] \times (\overline{\Omega^o} \setminus \Omega^i))$, which takes $\phi$ to the function $U_\phi$ given by 
        \begin{equation}\label{eq: U_phi := u_phi Psi phi}
        U_\phi(t,x):= \left( u_\phi \circ \Psi[\phi]^T \right) (t,x) \quad \text{for } (t,x) \in \overline{\Omega^o} \setminus \Omega^i,
        \end{equation}
        where $\Psi[\phi]^T(t,x):=(t,\Psi[\phi](x))$, is of class $C^\infty$.
    \end{itemize}
    Then, the map from $\widehat{\mathcal{W}}_0$ to $C^1([0,T])$, which takes $\phi$ to the function $e_\phi$ given by
    \begin{equation*}
    e_\phi(t) := \frac{1}{2} \int_{\Omega^o \setminus \overline{\Omega^i[\phi]}} (u_\phi(t,y))^2 \d y \quad \forall t \in [0,T],
    \end{equation*}
    is of class $C^\infty$. In particular, for every $t\in(0,T)$,
    \begin{equation}\label{eq: derivative of e general}
    \begin{aligned}
        \frac{\d{}}{\d t}e_\phi(t) &= - \int_{\Omega^o \setminus \overline{\Omega^i[\phi]}} |\nabla u_\phi(t,y)|^2 \d y + \int_{\partial\Omega^o} 
        \nabla u_\phi(t,y)\cdot \nu_{\Omega^o}(y) \, u_{\phi}(t,y) \d \sigma_y 
        \\
        & \quad
        - \int_{\partial\Omega^i} \nabla u_\phi(t,\phi(y))\cdot \nu_{\Omega^i[\phi]}(\phi(y)) \, u_\phi(t,\phi(y)) \, \tilde\sigma_n[\phi](y) \d\sigma_y,
    \end{aligned}
    \end{equation}
    and the right-hand side extends continuously to $[0,T]$.
\end{theorem}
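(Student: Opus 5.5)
The plan is to pull everything back to the fixed reference annulus $\Omega^o\setminus\overline{\Omega^i}$ via the global diffeomorphism $\Psi[\phi]$ of Lemma \ref{lemma diffeo}. By the change of variables $y=\Psi[\phi](x)$ we get
\begin{equation*}
e_\phi(t)=\frac12\int_{\Omega^o\setminus\overline{\Omega^i}} (U_\phi(t,x))^2\,\bigl|\det D\Psi[\phi](x)\bigr|\,\d x .
\end{equation*}
Since $\Psi[\phi]$ is a diffeomorphism of class $C^{1,\alpha}$ depending real analytically on $\phi$ (with values in $C^{1,\alpha}(\overline{\Omega^o}\setminus\Omega^i,\R^n)$), the Jacobian $J[\phi]:=\det D\Psi[\phi]$ depends real analytically on $\phi$ with values in $C^{0,\alpha}(\overline{\Omega^o}\setminus\Omega^i)$. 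Pointwise products are bounded multilinear, hence analytic, in Schauder spaces. We also have $J[\phi_0]=1>0$, and by shrinking $\widehat{\mathcal{W}}_0$ if needed (or because $\Psi[\phi]$ is orientation preserving) we may drop the absolute value. The map
\begin{equation*}
\phi\mapsto U_\phi^2\,J[\phi]\in C^{0}([0,T]\times(\overline{\Omega^o}\setminus\Omega^i))
\end{equation*}
is then $C^\infty$ by assumption (ii) together with the continuous embedding of $C_0^{\frac{1+\alpha}{2};1+\alpha}$ into continuous functions. Composing with the bounded linear map $v\mapsto \frac12\int v(\cdot,x)\,\d x$ gives smoothness of $\phi\mapsto e_\phi$ into $C^0([0,T])$.

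To reach $C^1([0,T])$ I would compute the time derivative on the reference domain. Differentiating under the integral sign in the pulled-back form requires $\partial_t U_\phi$, which is not available in $C_0^{\frac{1+\alpha}{2};1+\alpha}$ up to $t=0$ or up to the boundary. So I would first work on the physical domain for $t\in(0,T)$. There $u_\phi$ is caloric and hence smooth in the interior. Approximating the annulus from inside, integrating by parts via Green's formula, and passing to the limit using that $\nabla u_\phi$ is continuous up to the boundary gives
\begin{equation*}
e_\phi'(t)=\int u_\phi\Delta u_\phi=-\int|\nabla u_\phi|^2+\int_{\partial}u_\phi\,\partial_\nu u_\phi .
\end{equation*}
The inner boundary term carries the normal $\nu_{\Omega^i[\phi]}$ outward from the cavity, hence the minus sign. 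Changing variables on $\partial\Omega^i$ via $\phi$ produces the surface Jacobian $\tilde\sigma_n[\phi]$. This yields \eqref{eq: derivative of e general}.

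The key step, and the one I expect to be the main obstacle, is justifying this derivative rigorously near $t=0$ and near the boundaries with only $C^{\frac{1+\alpha}{2};1+\alpha}$ regularity. My approach is to define $g_\phi(t)$ as the right-hand side of \eqref{eq: derivative of e general}. I would then check that $g_\phi$ extends continuously to $[0,T]$, and that $e_\phi(t)-e_\phi(s)=\int_s^t g_\phi$ for $0<s<t<T$; letting $s\to0$ then shows $e_\phi\in C^1([0,T])$ with $e_\phi'=g_\phi$. Continuity of $g_\phi$ follows because $u_\phi$ and $\nabla u_\phi$ are continuous on the closed cylinder.

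For smoothness in $\phi$ with values in $C^1$, it then suffices to show that $\phi\mapsto g_\phi\in C^0([0,T])$ is $C^\infty$. I would rewrite each term on the reference configuration. The volume term becomes
\begin{equation*}
-\int_{\Omega^o\setminus\overline{\Omega^i}}\bigl|(D\Psi[\phi])^{-T}\nabla U_\phi\bigr|^2 J[\phi]\,\d x .
\end{equation*}
Here $D\Psi[\phi]^{-1}$ is analytic in $\phi$, by the analyticity of matrix inversion in the Banach algebra $C^{0,\alpha}$, and $\nabla U_\phi$ is smooth in $\phi$ with values in $C^0$. The outer boundary term involves $U_\phi$ only, since $\Psi[\phi]$ is the identity near $\partial\Omega^o$. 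In the inner term, $u_\phi(t,\phi(y))=U_\phi(t,y)$ because $\Psi[\phi]=\mathbf{E}[\phi]$ extends $\phi$. Moreover $\nabla u_\phi\circ\Psi[\phi]^T=(D\Psi[\phi])^{-T}\nabla U_\phi$, and $\nu_{\Omega^i[\phi]}\circ\phi$ and $\tilde\sigma_n[\phi]$ are real analytic in $\phi$ by the known results of Lanza de Cristoforis and Rossi \cite{LaRo08}. All these compositions are bounded multilinear or analytic operations, so $g_\phi$ is $C^\infty$ in $\phi$. Combining this with the smoothness of $e_\phi(0)$ gives the $C^\infty$ dependence of $\phi\mapsto e_\phi$ into $C^1([0,T])$.
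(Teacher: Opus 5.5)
Your proposal is correct and is essentially the paper's proof. You pull back by $\Psi[\phi]$ to get smoothness into $C^0([0,T])$, derive the energy identity on the physical domain, and show $\phi\mapsto g_\phi$ is $C^\infty$ into $C^0([0,T])$: the volume term via $D\Psi[\phi]^{-\top}\nabla U_\phi$ and $J[\phi]$, the outer term via $\Psi[\phi]=\mathrm{id}$ near $\partial\Omega^o$, and the inner term via $\Psi[\phi]=\mathbf{E}[\phi]$ with the analyticity of $\tilde\sigma_n[\phi]$ and $\nu_{\Omega^i[\phi]}\circ\phi$; you then integrate in time. Your integrated identity $e_\phi(t)-e_\phi(s)=\int_s^t g_\phi$ with $s\to0$ simply makes explicit the approximation argument the paper cites from \cite{Lu20}. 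Of your two reasons for dropping the absolute value of the Jacobian, use the orientation one: $\|D\Psi[\phi]-I_n\|<\frac12$ on $\mathcal W^\ast_0$ gives $J[\phi]>0$ on all of $\widehat{\mathcal W}_0$, whereas shrinking $\widehat{\mathcal W}_0$ would weaken the conclusion.
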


Our main result provides sufficient conditions for the smoothness of the domain-to-energy map for a family of Schauder functions. In particular, the novelty of the paper lies in the construction of the global diffeomorphism $\Psi[\phi]$ which maps the reference fixed annulus onto the perturbed
annulus, depending real analytically on the perturbation parameter $\phi$ (see Lemma \ref{lemma diffeo}), and using it to pull back the energy to the fixed domain. Furthermore, for the sake of clarity, in the proof of Theorem \ref{thm energy dependence abstract}, we first prove the $C^\infty$ dependence upon the perturbation $\phi$ of the energy $e_\phi$ as a $C^0([0,T])$-valued map. We then prove the $C^\infty$ dependence of its time derivative as a $C^0([0,T])$-valued map by using the pulled-back energy identity. This yields the stated smoothness of the energy as a map with values in $C^1([0,T])$. Both steps make precise use of the assumptions on the family $\{ u_\phi \}_{\phi \in \widehat{\mathcal{W}}_0}$ and on the pull-back family $\{ U_\phi \}_{\phi \in \widehat{\mathcal{W}}_0}$, see assumptions (i)-(ii) in Theorem \ref{thm energy dependence abstract}.

In the final part of the paper, we present two applications of our main result to boundary value problems: a nonlinear mixed Robin-type problem and a linear Dirichlet problem. We first briefly recall the existence and smoothness of the domain-to-solution map established in \cite{DaLuMoMu24.2} and \cite{DaLuMu25}, respectively, using the \textit{Functional Analytic Approach} and potential-theoretic techniques. We then apply Theorem \ref{thm energy dependence abstract} to prove the smoothness of the corresponding domain-to-energy map. The proof relies on the smooth dependence, with respect to perturbations of both the support and the density, of the $\phi$-pullbacks of the layer heat potentials established by Dalla Riva and Luzzini in \cite{DaLu23}. These fundamental results are recalled in Theorems \ref{thm dependence V Phi W Phi} and \ref{thm dependence V phi W phi} and are central to prove that the family of solutions provided by \cite{DaLuMoMu24.2} and \cite{DaLuMu25} satisfy assumption (ii) of Theorem \ref{thm energy dependence abstract}.

\subsection{Structure of the paper} The paper is organized as follows. In Section \ref{s:prel} we fix some notations, we recall the definition of parabolic Schauder spaces, the properties of the single layer heat potential, the relative boundary integral operators, the class of diffeomorphisms used to model the shape of the cavity and the smooth dependence of pullback layer heat potentials on perturbations of the support. Section \ref{sec: abstract result} contains the novelty of the paper and the proof of the main results: Lemma \ref{lemma diffeo} on the construction of the global diffeomorphism depending real analytically on the perturbation parameter and the proof of Theorem \ref{thm energy dependence abstract} via the pull-back of the energy to the fixed domain. Finally, in Section \ref{sec: application} we present two applications of our abstract result to boundary value problems, a nonlinear mixed Robin-type problem and a linear Dirichlet problem, proving that the corresponding domain-to-energy map associated to suitable family of solutions of those problems is of class $C^\infty$. 

\section{Preliminaries}\label{s:prel}
\subsection{Notation and parabolic Schauder spaces}
The symbol $\mathbb{N}$ denotes the set of natural numbers that includes $0$. Throughout the paper,
\[
n \in \mathbb{N} \setminus \{0, 1\}
\]
denotes the dimension of the Euclidean ambient space $\mathbb{R}^n$. The ball of center $x \in \mathbb{R}^n$ and radius $r\geq 0$ is denoted by $B(x,r)$. The inverse of an
invertible function $f$ is denoted by $f^{(-1)}$, while the reciprocal of function $g$ is denoted by $g^{-1}$. The transpose of a matrix $A \in \R^{n\times n}$ is denoted by $A^\top$, and the identity matrix by $I_n$.

If $\Omega$ is an open subset of $\mathbb{R}^n$, then $\overline{\Omega}$ denotes
the closure of $\Omega$, $\partial \Omega$ denotes the boundary of $\Omega$ and $\Omega^- := \mathbb{R}^n \setminus \overline{\Omega}$ denotes the exterior of $\Omega$. 

Let $m\in \mathbb{N}$ and $\alpha \in ]0,1[$. For the definition of open subsets of $\mathbb{R}^n$ of class $C^m$
and $C^{m,\alpha}$, and of the Schauder spaces $C^{m,\alpha}(\overline{\Omega})$ and $C^{m,\alpha}(\partial \Omega)$, we refer to Gilbarg and
Trudinger \cite[p.\,52\,\&\,p.\,94]{GiTr83}. In particular, we use the subscript ``$b$'' to denote the subspace consisting of those functions that are bounded.

For a comprehensive introduction to parabolic Schauder spaces, we refer the reader to classical monographs on the field, for example, Lady\v{z}enskaja, Solonnikov, and Ural'ceva
\cite[Ch.\,1]{LaSoUr68}. For completeness and the reader's convenience, we recap some useful definitions.

Let $\alpha \in ]0,1[$, $T>0$, and let $\Omega$ be an open subset of $\mathbb{R}^n$. Then we set 
\begin{equation*}
    C^{\frac{\alpha}{2},\alpha}([0, T] \times \overline{\Omega}) := \Big\{ u \in C^{0}_{b}([0, T] \times \overline{\Omega}) \, \colon \|u\|_{C^{\frac{\alpha}{2};\alpha}([0, T] \times \overline{\Omega})} < +\infty \Big\},
\end{equation*}
where
\begin{align*}
    \|u\|_{C^{\frac{\alpha}{2};\alpha}([0, T] \times \overline{\Omega})} :=& \sup_{[0, T] \times \overline{\Omega}} |u| + \sup_{\substack{t_1,t_2 \in [0,T] \\ t_1 \neq t_2}} \,\sup_{x \in \overline{\Omega}} \frac{|u(t_1,x) - u(t_2,x)|}{|t_1-t_2|^\frac{\alpha}{2}}
    \\
    & + \sup_{t \in [0,T]} \,\sup_{\substack{x_1,x_2 \in \overline{\Omega} \\ x_1 \neq x_2}} \frac{|u(t,x_1) - u(t,x_2)|}{|x_1-x_2|^\alpha} .
\end{align*}
Then we set
\begin{align*}
    C^{\frac{1+\alpha}{2}; 1+\alpha}([0, T] \times \overline{\Omega}) := \Big\{ u \in C^{0}_{b}([0, T] \times \overline{\Omega}) \, \colon \partial_{x_i} u \in C^{0}_b([0, T] \times \overline{\Omega}) \text{ for all } i \in \{1,\dots,n\}, 
    \\
    \text{and such that }
    \|u\|_{C^{\frac{1+\alpha}{2}; 1+\alpha}([0, T] \times \overline{\Omega})}
    < +\infty \Big\},
\end{align*}
where
\begin{align*}
    \|u\|_{C^{\frac{1+\alpha}{2}; 1+\alpha}([0, T] \times \overline{\Omega})} := \sup_{[0, T] \times \overline{\Omega}} |u| + \sum_{i=1}^{n} \|\partial_{x_i} u\|_{C^{\frac{\alpha}{2}; \alpha}([0, T] \times \overline{\Omega})} 
     + \sup_{\substack{t_1,t_2 \in [0,T] \\ t_1 \neq t_2}} \,\sup_{x \in \overline{\Omega}} \frac{|u(t_1,x) - u(t_2,x)|}{|t_1-t_2|^{\frac{1+\alpha}{2}}} .
\end{align*}
For open subset $\Omega \subset \mathbb{R}^n $ of class $C^{1,\alpha}$, we define the spaces
$C^{\frac{j+\alpha}{2}; j+\alpha}([0, T] \times \partial \Omega)$, $j \in \{0,1\}$, in the natural way by local parametrization of $\partial \Omega$. Similarly, we define the spaces $C^{j,\alpha}(\mathrm{M})$ and $C^{\frac{j+\alpha}{2}; j+\alpha}([0, T] \times \mathrm{M})$, $j \in \{0,1\}$, on a manifold $\mathrm{M}$ of class $C^{j,\alpha}$ embedded in $\mathbb{R}^n$ (see \cite[App.\,A]{DaLu23}). Finally, for $j \in \{0,1\}$, we set
\[
C_0^{\frac{j+\alpha}{2}; j+\alpha}([0, T] \times \overline{\Omega}):= 
\Big\{ u \in C^{\frac{j+\alpha}{2}; j+\alpha}([0, T] \times \overline{\Omega}) \colon u(0,x) = 0 \text{ for all } x \in \overline{\Omega} \Big\},
\]
and similarly for the spaces $C_0^{\frac{j+\alpha}{2}; j+\alpha}([0, T] \times \partial \Omega)$ and $C_0^{\frac{j+\alpha}{2}; j+\alpha}([0, T] \times \mathrm{M})$.

For functions in parabolic Schauder spaces, the partial derivative with respect to the space variable $x$ will be denoted by $\nabla$. At the same time, we will use the notation $\partial_t$ for the derivative with respect to the time variable $t$.

\subsection{Layer heat potentials}
In this section, we present some well-known facts about layer heat potentials. For proofs, we refer to Lady\v{z}enskaja, Solonnikov, and Ural'ceva
\cite{LaSoUr68}. 

\begin{definition}
    Let $S_n : \mathbb{R}^{1+n} \setminus
    \{(0,0)\}\to \mathbb{R}$ be the function defined by
    \[
    S_n(t,x):=
    \left\{
    \begin{array}{ll}
    \frac{1}{(4\pi t)^{\frac{n}{2}} }e^{-\frac{|x|^{2}}{4t}}&{\mathrm{if}}\ (t,x)\in ]0,+\infty[ \times{\mathbb{R}}^{n}\,, 
    \\
    0 &{\mathrm{if}}\ (t,x)\in (]-\infty,0]\times{\mathbb{R}}^{n})\setminus\{(0,0)\}.
    \end{array}
    \right.
    \]
    Then, $S_n$ is the fundamental solution of the heat operator $\partial_t-\Delta$ in $\mathbb{R}^{1+n} \setminus \{(0,0)\}$.
\end{definition}

We fix once and for all 
\[
\alpha \in ]0,1[, \, T>0 \text{ and } \Omega \text{ be an open bounded subset of }\mathbb{R}^n \text{ of class }C^{1,\alpha}.
\] 
Then, we introduce the single layer heat potential and the related boundary integral operator.

\begin{definition}\label{def single double potential}
    Let $\mu \in L^\infty\big([0,T] \times \partial\Omega\big)$. We define the single layer heat potential $v_{\Omega}[\mu]$ as the function of $[0,T]\times \mathbb{R}^n$ given by
    \begin{equation*} 
    v_{\Omega} [\mu](t,x) := \int_{0}^{t} \int_{\partial \Omega} S_n(t-\tau,x-y) \mu(\tau, y) \d \sigma_y \d \tau \quad \forall(t,x) \in [0, T] \times \mathbb{R}^n.
    \end{equation*}
    Moreover, we set 
    \begin{equation*}
        V_{\partial\Omega} [\mu] := \left( v_{\Omega} [\mu] \right)_{|[0,T]\times \partial\Omega},
    \end{equation*}
    and
    \begin{equation*}
    W^\ast_{\partial\Omega} [\mu](t,x) := \int_{0}^{t} \int_{\partial \Omega} \nabla S_n(t-\tau,x-y) \cdot \nu_\Omega(x) \mu(\tau, y) \d \sigma_y \d \tau \quad \forall(t,x) \in [0, T] \times \partial\Omega.
    \end{equation*}
\end{definition}

We stress that, in general, layer heat potentials satisfy properties similar to those of their standard elliptic counterparts. 
In the following, we collect some well-known properties of the single layer heat potential. For the proof of the following result, we refer to \cite{DaLu23,LaSoUr68,LaLu17,LaLu19}. In particular, a proof of Theorem \ref{thmsl}\,(iv) can be found in \cite{Mo24}.

\begin{theorem}\label{thmsl}
The following statements hold.
\begin{itemize}

\item[(i)] Let $\mu \in L^\infty([0,T] \times \partial\Omega)$. Then the function $v_{\Omega}[\mu]$ is continuous, $v_{\Omega}[\mu]$ solves the heat equation 
in $]0,T]\times (\mathbb{R}^n \setminus \partial\Omega)$ and 
$v_{\Omega}[\mu] \in C^\infty(]0,T[ \times (\mathbb{R}^n \setminus \partial\Omega))$. 

\item[(ii)] Let $v_\Omega^+[\mu]$ and $v_\Omega^-[\mu]$ denote 
the restrictions of $v_\Omega[\mu]$ to $[0,T] \times \overline{\Omega}$ and to $[0,T]\times \overline{\Omega^-}$, respectively. Then, the map from  $C_0^{\frac{\alpha}{2};  \alpha}([0,T] \times \partial\Omega)$ to  $C_{0}^{\frac{1+\alpha}{2}; 1+\alpha}([0,T] \times \overline{\Omega})$ that takes $\mu$ to $v_{\Omega}^+[\mu]$ is linear and continuous. If $R>0$ is such that $\overline{\Omega}$ is contained in the ball $B(0,R)$ of center $0$ and radius $R$, then the map from  $C_0^{\frac{\alpha}{2};  \alpha}([0,T] \times \partial\Omega)$ to  $C_{0}^{\frac{1+\alpha}{2}; 1+\alpha}([0,T] \times (\overline{B(0,R)}\setminus \Omega^-))$ that takes $\mu$ to $v_{\Omega}^-[\mu]_{|[0,T] \times (\overline{B(0,R)}\setminus \Omega^-)}$ is also linear and continuous.

\item[(iii)] Let $\mu \in C_0^{\frac{\alpha}{2};  \alpha}([0,T] \times \partial\Omega)$. Then the following jump relations hold: 
\[
\frac{\partial}{\partial \nu_\Omega}v_{\Omega}^\pm[\mu](t,x)  = \pm \frac{1}{2}\mu(t,x)
 +W_{\partial\Omega}^*[\mu](t,x), \qquad \forall (t,x) \in [0,T] \times \partial\Omega.
\]

\item[(iv)] The operator $V_{\partial \Omega}$ is an isomorphism from $C_0^{\frac{\alpha}{2}; \alpha}([0,T] \times \partial\Omega)$ to $C_0^{\frac{1+\alpha}{2}; 1+\alpha}([0,T] \times \partial\Omega)$.

\item[(v)] The operator $W^\ast_{\partial \Omega}$ is compact from $C_0^{\frac{\alpha}{2}; \alpha}([0,T] \times \partial\Omega)$ into itself.
\end{itemize}
\end{theorem}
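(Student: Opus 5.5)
We treat the five items of Theorem \ref{thmsl} separately: items (i)--(iii) are local properties of the kernel, while (iv)--(v) are Fredholm-type statements built on top of them.

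For (i), we start from the Gaussian bounds
\[
|\partial_t^k\partial_x^\beta S_n(t,x)| \le C\, t^{-\frac{n+|\beta|}{2}-k} e^{-\frac{|x|^2}{8t}}.
\]
Since $\partial\Omega$ is $(n-1)$-dimensional, these give $\int_{\partial\Omega} S_n(t-\tau,x-y)\,\d\sigma_y \le C (t-\tau)^{-1/2}$ uniformly in $x$. That bound is integrable in $\tau$, so $v_\Omega[\mu]$ is continuous on $[0,T]\times\R^n$ for bounded $\mu$, with modulus of continuity controlled by dominated convergence. Away from $\partial\Omega$, the kernel and all its derivatives are smooth and uniformly bounded, because $|x-y|\ge \mathrm{dist}(x,\partial\Omega)>0$. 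Hence we may differentiate under the integral sign. Combined with $(\partial_t-\Delta)S_n=0$ for $t>0$, and with the vanishing of the boundary term at $\tau=t$ off $\partial\Omega$, this gives the heat equation and $C^\infty$ regularity.

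For (ii) and (iii), we localize with a partition of unity and flatten $\partial\Omega$ by $C^{1,\alpha}$ charts. The model case is the half-space, where the single layer potential with a $C_0^{\frac{\alpha}{2};\alpha}$ density is known to lie in $C_0^{\frac{1+\alpha}{2};1+\alpha}$ on each closed side (Lady\v{z}enskaja--Solonnikov--Ural'ceva). The curvature errors produced by the chart are kernels with an extra factor $|x-y|^\alpha$. Such weakly singular kernels map bounded densities into $C^{\frac{1+\alpha}{2};1+\alpha}$, which handles the corrections.

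The jump relation follows the elliptic argument. We write $\nabla v_\Omega[\mu](t,x\pm h\nu_\Omega(x))$ as the direct-value integral plus a term close to the half-space double-layer-type kernel. Letting $h\to 0^+$, that term produces $\pm\frac12\mu(t,x)$, because $\int_0^t\int_{\R^{n-1}}\partial_{x_n}S_n\,\d y'\,\d\tau \to \mp\frac12$. Linearity is clear, and continuity follows from the estimates. The vanishing at $t=0$ is inherited from the integral over $[0,t]$.

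For (v), we show that $W^*_{\partial\Omega}$ maps $C_0^{\frac{\alpha}{2};\alpha}$ continuously into $C_0^{\frac{\beta}{2};\beta}$ for some $\beta>\alpha$. This gain comes from the cancellation $|(x-y)\cdot\nu_\Omega(x)|\le C|x-y|^{1+\alpha}$, which reduces the singularity of the kernel. Compactness then follows from the compact embedding of parabolic H\"older spaces on the compact set $[0,T]\times\partial\Omega$.

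For (iv), continuity of $V_{\partial\Omega}$ follows from (ii) by restriction. Injectivity is proved by a uniqueness argument. If $V_{\partial\Omega}[\mu]=0$, then $v^\pm_\Omega[\mu]$ solve the heat equation with zero data inside and outside; for the exterior problem we use decay at infinity. Uniqueness for these initial-boundary value problems gives $v_\Omega[\mu]\equiv 0$, and then the jump relation (iii) gives $\mu=\frac{\partial}{\partial\nu_\Omega}v^+_\Omega[\mu]-\frac{\partial}{\partial\nu_\Omega}v^-_\Omega[\mu]=0$.

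Surjectivity is the main obstacle, since $V_{\partial\Omega}$ is not of the form identity plus compact. We would try to reduce it to the interior Dirichlet problem. Given $g\in C_0^{\frac{1+\alpha}{2};1+\alpha}([0,T]\times\partial\Omega)$, we solve the interior Dirichlet problem and the exterior Dirichlet problem (truncated by a ball) with Schauder data $g$. Representing the solution through the Neumann traces gives a density $\mu$, the jump of the normal derivatives, which lies in $C_0^{\frac{\alpha}{2};\alpha}$, and Green's identity gives $V_{\partial\Omega}[\mu]=g$. Alternatively, one can compose $V_{\partial\Omega}$ with a parametrix, for instance $\partial_t^{1/2}$, to obtain a Fredholm operator of index zero and then use injectivity. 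The bounded inverse follows from the open mapping theorem. Here we would follow \cite{Mo24}.
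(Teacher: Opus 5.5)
The paper does not prove this theorem. It quotes it from \cite{DaLu23,LaSoUr68,LaLu17,LaLu19}, with (iv) from \cite{Mo24}, so your sketch has to stand on its own. Items (i)--(iii) follow the classical route, and the injectivity half of (iv) is correct. In (iii) the sign bookkeeping is reversed: $x+h\nu_\Omega(x)$ lies in $\Omega^-$, where the model integral tends to $-\frac12$, so it yields the relation for $v^-_\Omega$.

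\textbf{The gap is in (v).} You claim that $W^\ast_{\partial\Omega}$ maps $C_0^{\frac{\alpha}{2};\alpha}$ continuously into $C_0^{\frac{\beta}{2};\beta}$ for some $\beta>\alpha$. For a boundary that is only $C^{1,\alpha}$ this is false in general. The kernel $\nabla S_n(t-\tau,x-y)\cdot\nu_\Omega(x)$ depends on $x$ through $\nu_\Omega(x)$ and through the local graph of $\partial\Omega$, which are only $C^{0,\alpha}$ and $C^{1,\alpha}$. The cancellation $|(x-y)\cdot\nu_\Omega(x)|\le C|x-y|^{1+\alpha}$ makes the kernel integrable, but it does not give the output more regularity than the geometry allows.

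This is already visible in the planar elliptic analogue. If $\partial\Omega$ is locally the graph of $\gamma\in C^{1,\alpha}$, the adjoint double layer applied to the constant $1$ is, to leading order and up to a positive $C^{0,\alpha}$ factor, a truncated Hilbert transform of $\gamma'$. The Hilbert transform does not raise H\"older exponents, so the output is in general exactly $C^{0,\alpha}$. The heat operator applied to $\mu(t,y)=t$ has the same leading singularity at each fixed $t>0$. Indeed, $\int_0^t(t-s)\nabla S_n(s,z)\,ds$ equals $t$ times the gradient of the Laplace fundamental solution, up to less singular terms.

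The standard repair is to place the gain on the source side. Show that $W^\ast_{\partial\Omega}$ is bounded into $C_0^{\frac{\alpha}{2};\alpha}$ from a weaker space into which $C_0^{\frac{\alpha}{2};\alpha}$ embeds compactly, for instance $C_0^{\frac{\beta}{2};\beta}$ with $\beta<\alpha$. Then compose with that compact embedding.

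\textbf{Surjectivity in (iv)} needs three corrections.
\begin{itemize}
\item The $\partial_t^{1/2}$ alternative does not work. For a flat boundary, $V_{\partial\Omega}$ has symbol $\frac{1}{2\sqrt{p+|\xi'|^2}}$, with $p$ dual to $t$ and $\xi'$ dual to the tangential variables. So $\partial_t^{1/2}V_{\partial\Omega}$ has symbol $\frac{\sqrt{p}}{2\sqrt{p+|\xi'|^2}}$, which degenerates at $p=0$, $\xi'\neq0$. It is not elliptic, so $\partial_t^{1/2}$ is not a parametrix.
\item In the Dirichlet route, the exterior problem must be posed on all of $\Omega^-$, with bounded solutions and zero initial datum. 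If you solve in $\Omega^-\cap B(0,R)$, Green's identity produces extra layer potentials on $\partial B(0,R)$, and $V_{\partial\Omega}[\mu]=g$ no longer follows.
\item Whichever route you take, you need the interior Dirichlet problem to be solvable with solution in $C_0^{\frac{1+\alpha}{2};1+\alpha}([0,T]\times\overline{\Omega})$ on a merely $C^{1,\alpha}$ domain. This is a substantial input, usually obtained from the double layer heat potential, and must be quoted rather than treated as routine.
\end{itemize}

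You can avoid the exterior problem altogether. Once (v) holds, $\frac12 I+W^\ast_{\partial\Omega}$ is Fredholm of index zero. It is also injective: if $(\frac12 I+W^\ast_{\partial\Omega})[\mu]=0$, uniqueness for the interior Neumann problem gives $v^+_\Omega[\mu]=0$, hence $V_{\partial\Omega}[\mu]=0$, hence $\mu=0$ by your injectivity argument. Now let $u$ be the interior solution with trace $g$ and set $\mu:=(\frac12 I+W^\ast_{\partial\Omega})^{-1}[\partial_{\nu_\Omega}u]$. Neumann uniqueness gives $v^+_\Omega[\mu]=u$, and therefore $V_{\partial\Omega}[\mu]=g$.
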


\subsection{The class of diffeomorphisms and pullback layer potentials} \label{subsec: class of diffeo}
We introduce a class of diffeomorphisms that we use to model the shape of the perturbed domains. We recall some technical lemmas and smoothness (or even real analyticity) results related to the change of variables in integrals, extension operators in the neighborhood of the unperturbed boundary $\partial\Omega$, the pullback of the outer normal, and, finally, the smoothness of integral layer potential operators upon the parametrization of the support of integration.

So, let $\Omega$ be an open bounded connected subset of $\R^n$ of class $C^{1,\alpha}$, we define
\begin{equation*}
    \mathcal{A}_{\partial\Omega} := \left\{ \phi \in C^{1,\alpha}(\partial\Omega,\R^n) : \phi \text{ injective}, \, \d \phi(y) \text{ injective for all } y \in \partial\Omega \right\},    
\end{equation*}
and 
\begin{equation*}
    \mathcal{A}_{\overline{\Omega}} := \left\{ \phi \in C^{1,\alpha}(\overline{\Omega},\R^n) : \phi \text{ injective}, \, \d \phi(y) \text{ injective for all } y \in \overline{\Omega} \right\}.   
\end{equation*}

Then, it is well-known that the sets $\mathcal{A}_{\partial\Omega}$ and $\mathcal{A}_{\overline{\Omega}}$ are open in $C^{1,\alpha}(\partial\Omega,\R^n)$ and $C^{1,\alpha}(\overline{\Omega},\R^n)$,
respectively (see, e.g., Lanza de Cristoforis and Rossi \cite[Lem.\,2.2]{LaRo08}
and \cite[Lem.\,2.5]{LaRo04}). In particular, it is well established that if $\Omega$ has a connected exterior $\Omega^- = \R^n \setminus \overline{\Omega}$, then $\R^n \setminus \partial\Omega$ has two open connected components and thus the Jordan-Leray separation theorem ensures that $\R^n \setminus \phi(\partial\Omega)$ has exactly two open connected components for all $\phi \in \mathcal{A}_{\partial\Omega}$ (see, e.g., Deimling \cite[Thm.\,5.2]{Deimling85}). One of these open connected components is bounded, and we denote it by $\Omega[\phi]$, while the other one is unbounded, and we denote it by $\Omega[\phi]^-$. 

We start by recalling two technical lemmas that show that a diffeomorphism on $\partial\Omega$ can be extended in a suitably defined neighborhood of $\partial\Omega$ by means of a real-analytic extension operator (see \cite[Lems.\,2.2\,\&\,2.3]{DaLu23} and also \cite{LaRo08}).

\begin{lemma}\label{lemma Omega tub}
    Let $\Omega$ be an open bounded connected subset of $\R^n$ of class $C^{1,\alpha}$ such that $\Omega^-$ is also connected. There exists $\omega \in C^{1,\alpha}(\partial\Omega, \R^n)$, with $|\omega|=1$ and $\omega \cdot \nu_\Omega > \frac{1}{2}$ on $\partial\Omega$. Moreover, the following statements hold.
    \begin{enumerate}
        \item[(i)] There exists $\delta_{\Omega} \in (0,\infty)$ such that the sets
        \begin{equation}\label{eq: Omega omega delta}
        \begin{aligned}
            &\Omega_{\omega,\delta} := \{ x+s\omega(x) \colon x \in \partial\Omega, s \in (-\delta,\delta)\},
            \\
            &\Omega^+_{\omega,\delta} := \{ x+s\omega(x) \colon x \in \partial\Omega, s \in (-\delta,0)\},
            \\
            &\Omega^-_{\omega,\delta} := \{ x+s\omega(x) \colon x \in \partial\Omega, s \in (0,\delta)\},
        \end{aligned}
        \end{equation}
        are connected and of class $C^{1,\alpha}$. They have boundaries
        \begin{align*}
            &\partial\Omega_{\omega,\delta} := \{ x+s\omega(x) \colon x \in \partial\Omega, s \in \{-\delta,\delta\} \},
            \\
            &\partial\Omega^+_{\omega,\delta} := \{ x+s\omega(x) \colon x \in \partial\Omega, s \in \{-\delta,0\} \},
            \\
            &\partial\Omega^-_{\omega,\delta} := \{ x+s\omega(x) \colon x \in \partial\Omega, s \in \{0,\delta\} \},
        \end{align*}
        and we have $\Omega^+_{\omega,\delta} \subseteq \Omega$ and $\Omega^-_{\omega,\delta} \subseteq \Omega^-$ for all $\delta \in (0,\delta_{\Omega})$.
        
        \item[(ii)] Let $\delta \in (0,\delta_{\Omega})$. If $\Phi \in \mathcal{A}_{\overline{\Omega_{\omega,\delta}}}$, then $\phi := \Phi_{|\partial\Omega} \in \mathcal{A}_{\partial\Omega}$.

        \item[(iii)] If $\delta \in (0,\delta_{\Omega})$, then the set 
        \begin{equation*}
            \mathcal{A}^*_{\overline{\Omega_{\omega,\delta}}} := \left\{ \Phi \in \mathcal{A}_{\overline{\Omega_{\omega,\delta}}} \colon \Phi(\Omega^+_{\omega,\delta}) \subseteq \Omega[\Phi_{|\partial\Omega} ] \right\}
        \end{equation*}
        is open in $\mathcal{A}_{\overline{\Omega_{\omega,\delta}}}$ and $\Phi(\Omega^-_{\omega,\delta}) \subseteq \Omega[\Phi_{|\partial\Omega}]^-$ for all $\Phi \in \mathcal{A}^*_{\overline{\Omega_{\omega,\delta}}}$.

        \item[(iv)] If $\delta \in (0,\delta_{\Omega})$, then both $\Phi(\Omega^+_{\omega,\delta})$ and $\Phi(\Omega^-_{\omega,\delta})$ are open sets of class $C^{1,\alpha}$, and 
        \begin{equation*}
            \partial \Phi(\Omega^+_{\omega,\delta}) = \Phi(\partial \Omega^+_{\omega,\delta}), \quad \partial \Phi(\Omega^-_{\omega,\delta}) = \Phi(\partial \Omega^-_{\omega,\delta}).
        \end{equation*}
    \end{enumerate}
\end{lemma}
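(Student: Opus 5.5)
The plan is to construct $\omega$ by smoothing the normal, get (i) from a uniform tubular-neighbourhood argument for the map $(x,s)\mapsto x+s\omega(x)$, and then derive (ii)–(iv) from the fact that an injective $C^{1,\alpha}$ immersion of a compact set is a $C^{1,\alpha}$ diffeomorphism onto its image, together with Jordan–Leray separation and invariance of domain.

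\emph{Construction of $\omega$.} Since $\Omega$ is of class $C^{1,\alpha}$, the normal $\nu_\Omega$ is only of class $C^{0,\alpha}$ on $\partial\Omega$. I extend $\nu_\Omega$ to a continuous compactly supported field on $\R^n$ and mollify it, obtaining $N_\varepsilon\in C^\infty(\R^n,\R^n)$. For $\varepsilon$ small, $\sup_{\partial\Omega}|N_\varepsilon-\nu_\Omega|<1/4$, so $N_\varepsilon$ does not vanish on $\partial\Omega$. I then set
\[
\omega:=\frac{N_\varepsilon}{|N_\varepsilon|}\Big|_{\partial\Omega}.
\]
This field is of class $C^{1,\alpha}$ because it is a restriction of a smooth map, and it satisfies $|\omega|=1$. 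An elementary estimate gives $\omega\cdot\nu_\Omega>1/2$.

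\emph{Proof of (i).} Consider $F(x,s):=x+s\omega(x)$ on $\partial\Omega\times\R$. It is of class $C^{1,\alpha}$, and at $s=0$ its differential maps $T_x\partial\Omega\times\R$ injectively onto $\R^n$, since $\omega(x)$ is transversal to $T_x\partial\Omega$ (because $\omega\cdot\nu_\Omega>0$). By the inverse function theorem and compactness of $\partial\Omega$, $dF$ is invertible on $\partial\Omega\times[-\delta_1,\delta_1]$ and $F$ is locally injective there. Global injectivity for small $\delta$ follows by contradiction. Suppose $F(x_k,s_k)=F(x_k',s_k')$ with $(x_k,s_k)\neq(x_k',s_k')$ and $s_k,s_k'\to0$. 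Extracting subsequences, $x_k,x_k'\to x$, $x'$ with $x=x'$, because $F(x,0)=x$. This contradicts local injectivity near $(x,0)$.

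Hence $F$ is a $C^{1,\alpha}$ diffeomorphism from $\partial\Omega\times(-\delta,\delta)$ onto the open set $\Omega_{\omega,\delta}$, and it extends to the closed tube. The boundary description follows from this. Each boundary piece $\{x\pm\delta\omega(x)\}$ is the image of $\partial\Omega$ under a $C^{1,\alpha}$ injective immersion close to the identity, so the three sets are of class $C^{1,\alpha}$.

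For connectedness, I first note that $\partial\Omega$ is connected because $\Omega$ and $\Omega^-$ are both connected (Alexander duality / Mayer–Vietoris). Each tube is then the image of a product of connected sets. For the sides, $\omega\cdot\nu_\Omega>1/2$ gives $x+s\omega(x)\in\Omega^-$ for all small $s>0$, uniformly in $x$. The segment $s\mapsto x+s\omega(x)$, $s\in(0,\delta)$, never meets $\partial\Omega=F(\cdot,0)$ by injectivity of $F$, so it stays in $\Omega^-$. The case $s<0$ is symmetric and gives the inclusion in $\Omega$.

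\emph{Proof of (ii).} The restriction of an injective map with injective differential to the $C^{1,\alpha}$ submanifold $\partial\Omega$ is still injective. Its differential is injective on $T\partial\Omega$, and it is of class $C^{1,\alpha}$, so $\phi\in\mathcal{A}_{\partial\Omega}$.

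\emph{Proof of (iv).} By invariance of domain and compactness, $\Phi$ is a $C^{1,\alpha}$ diffeomorphism of $\overline{\Omega_{\omega,\delta}}$ onto its image, open in the interior. Therefore $\Phi$ maps the $C^{1,\alpha}$ open sets $\Omega^\pm_{\omega,\delta}$ onto open sets of class $C^{1,\alpha}$, with $\partial\Phi(\Omega^\pm_{\omega,\delta})=\Phi(\partial\Omega^\pm_{\omega,\delta})$.

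\emph{Proof of (iii).} The set $\Phi(\Omega^+_{\omega,\delta})$ is connected and disjoint from $\Phi(\partial\Omega)$, so it lies in exactly one component of $\R^n\setminus\Phi(\partial\Omega)$. Fix the point $p:=x_0-\tfrac{\delta}{2}\omega(x_0)$. Then the defining condition of $\mathcal{A}^*_{\overline{\Omega_{\omega,\delta}}}$ is equivalent to $\Phi(p)\in\Omega[\Phi_{|\partial\Omega}]$. This in turn is equivalent to the Brouwer degree of $\Phi_{|\partial\Omega}$ with respect to the point $\Phi(p)$ being nonzero (degree in the sense of Deimling). That degree is locally constant under $C^0$-small perturbations of $\Phi$, as long as $\Phi(p)$ stays away from $\Phi(\partial\Omega)$, which holds nearby. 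This gives openness.

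For the inclusion $\Phi(\Omega^-_{\omega,\delta})\subseteq\Omega[\Phi_{|\partial\Omega}]^-$, I localize near a point $\Phi(x)$ with $x\in\partial\Omega$. There $\Phi$ is a local $C^1$ diffeomorphism, so it maps the two local sides of $\partial\Omega$ onto the two local sides of the $C^1$ hypersurface $\Phi(\partial\Omega)$. The two local sides of a hypersurface near a boundary point lie in different components, the bounded one and the unbounded one. Since $\Phi(\Omega^+_{\omega,\delta})$ lies in $\Omega[\phi]$, connectedness of $\Phi(\Omega^-_{\omega,\delta})$ forces it into $\Omega[\phi]^-$.

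The main obstacle I expect is the uniform injectivity in (i), which needs a careful compactness argument, and making rigorous the claim in (iii) that opposite local sides of $\Phi(\partial\Omega)$ belong to different components. For the latter I would first try the degree jump across the hypersurface: the degree with respect to a point changes by $\pm1$ when the point crosses $\Phi(\partial\Omega)$ transversally.
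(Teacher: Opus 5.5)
The paper gives no proof of this lemma. It is imported from \cite[Lems.\,2.2\,\&\,2.3]{DaLu23} (going back to \cite{LaRo08}), so there is no in-paper argument to compare with, and your proposal has to stand on its own. It does.

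\begin{itemize}
\item Normalizing a mollified extension of $\nu_\Omega$ gives an admissible $\omega$.
\item The inverse function theorem plus your compactness/contradiction argument gives a uniform $\delta_\Omega$. Keep a slightly larger injectivity radius than $\delta_\Omega$, so that the boundary description and the $C^{1,\alpha}$ regularity of the tubes come for free.
\item Connectedness of $\partial\Omega$ does follow from Alexander duality, since $\R^n\setminus\partial\Omega$ has exactly the two components $\Omega$ and $\Omega^-$.
\item The segment argument puts $\Omega^\pm_{\omega,\delta}$ on the correct sides.
\end{itemize}

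Your degree route to openness of $\mathcal{A}^*_{\overline{\Omega_{\omega,\delta}}}$ is sound, but the equivalence ``$y\in\Omega[\phi]$ iff $\deg(\tilde\phi,\Omega,y)\neq 0$'' needs one more line. Here $\tilde\phi$ is any continuous extension of $\phi=\Phi_{|\partial\Omega}$ to $\overline{\Omega}$. The degree vanishes on the unbounded component, because it is constant there and zero far away. It equals $\pm1$ on $\Omega[\phi]$ by Leray's product formula applied to $g\circ\tilde\phi$, with $g$ a continuous extension of $\phi^{(-1)}$ to $\R^n$. This works because $g\circ\tilde\phi=\mathrm{id}$ on $\partial\Omega$ and $\Omega[\phi]$ is the only bounded component of $\R^n\setminus\phi(\partial\Omega)$.

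The step you flagged as the main obstacle in (iii) needs neither a degree-jump computation nor a local two-sides analysis. Suppose $\Phi(\Omega^-_{\omega,\delta})$ also lay in $\Omega[\phi]$. Then the open set $\Phi(\Omega_{\omega,\delta})=\Phi(\Omega^+_{\omega,\delta})\cup\phi(\partial\Omega)\cup\Phi(\Omega^-_{\omega,\delta})$ would contain $\phi(\partial\Omega)$ and miss $\Omega[\phi]^-$. But $\Omega[\phi]^-$ is open, nonempty and different from $\R^n$, hence not closed. Any point of $\overline{\Omega[\phi]^-}\setminus\Omega[\phi]^-$ lies in $\phi(\partial\Omega)$, and every neighbourhood of it meets $\Omega[\phi]^-$, a contradiction.

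In (iv), the local-diffeomorphism/invariance-of-domain argument only covers the boundary piece $\phi(\partial\Omega)$. The other pieces $\Phi(\{x\pm\delta\omega(x)\colon x\in\partial\Omega\})$ lie on $\Phi(\partial\Omega_{\omega,\delta})$, where $\Phi$ is defined on one side only. You get one-sidedness there in either of two ways:
\begin{itemize}
\item extend $\Phi$ to a $C^{1,\alpha}$ map on a neighbourhood of $\overline{\Omega_{\omega,\delta}}$; it stays injective nearby by the same compactness argument as in (i);
\item apply invariance of domain to $\Phi^{(-1)}$ to rule out that $\Phi(\overline{\Omega_{\omega,\delta}})$ contains a full ball around such a point.
\end{itemize}
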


For the reader's convenience, we sketch a local representation of the domains $\Omega^\pm_{\omega,\delta}$ (in particular the boundaries $\partial \Omega^\pm_{\omega,\delta}$) built upon $\partial\Omega$ through the vector field $\omega$, see Figure \ref{fig: domains Omega omega,delta pm} below.

\begin{figure}[ht!]
\centering
\begin{tikzpicture}

\def\f{0.2*sin(1.2*\x r) + 0.05*sin(2.5*\x r)}

\def\del{1.3}

\draw[thick, domain=-3:3, smooth, variable=\x]
  plot ({\x}, {\f});

\foreach \x in {-2.5,-1.5,-0.5,0.5,1.5,2.5} {

    \pgfmathsetmacro{\y}{0.2*sin(1.2*\x r) + 0.05*sin(2.5*\x r)}

    \pgfmathsetmacro{\wx}{0.4 + 0.2*cos(\x r)}
    \pgfmathsetmacro{\wy}{1 + 0.3*sin(\x r)}

    \pgfmathsetmacro{\norm}{sqrt(\wx*\wx + \wy*\wy)}

    \pgfmathsetmacro{\ux}{\wx/\norm}
    \pgfmathsetmacro{\uy}{\wy/\norm}

    \draw[->, thick] (\x,\y) -- ++({0.6*\ux},{0.6*\uy});
}

\draw[red!70, thick, domain=-3:3, smooth, variable=\x]
plot ({
    \x + \del*(0.4 + 0.2*cos(\x r))
           /sqrt((0.4 + 0.2*cos(\x r))^2 + (1 + 0.3*sin(\x r))^2)
},{
    \f + \del*(1 + 0.3*sin(\x r))
           /sqrt((0.4 + 0.2*cos(\x r))^2 + (1 + 0.3*sin(\x r))^2)
});

\draw[blue!70, thick, domain=-3:3, smooth, variable=\x]
plot ({
    \x - \del*(0.4 + 0.2*cos(\x r))
           /sqrt((0.4 + 0.2*cos(\x r))^2 + (1 + 0.3*sin(\x r))^2)
},{
    \f - \del*(1 + 0.3*sin(\x r))
           /sqrt((0.4 + 0.2*cos(\x r))^2 + (1 + 0.3*sin(\x r))^2)
});

\node at (3.5,0) {\small$\partial\Omega$};
\node at (3.7,1.3) {\small\color{red}$\partial\Omega^-_{\omega,\delta}$};
\node at (3.2,-1.3) {\small\color{blue}$\partial\Omega^+_{\omega,\delta}$};
\node at (-2.9,0.5) {\small$\omega(x)$};

\end{tikzpicture}
\caption{A local visualization of the boundaries of the sets $\Omega^\pm_{\omega,\delta}$.}
\label{fig: domains Omega omega,delta pm}
\end{figure}

\begin{lemma}\label{lemma extension operator}
    Let $\Omega, \omega,\delta_{\Omega}$ be as in Lemma \ref{lemma Omega tub}. Let $\tilde{\phi} \in \mathcal{A}_{\partial\Omega}$. Then the following statements hold.
    \begin{enumerate}
        \item[(i)] There exist $\tilde{\delta} \in (0,\delta_{\Omega})$ and $\tilde{\Phi} \in \mathcal{A}^*_{\overline{\Omega_{\omega,\tilde{\delta}}}}$ such that $\tilde{\phi} = {\tilde{\Phi}}_{|\partial\Omega}$.

        \item[(ii)] Let $\tilde{\delta}, \tilde{\Phi}$ be as in statement (i). Then there exist an open neighborhood $\tilde{\mathcal{W}}$ of $\tilde{\phi}$ in $\mathcal{A}_{\partial\Omega}$, and a real analytic extension operator $\mathbf{E}$ from $C^{1,\alpha}(\partial\Omega,\R^n)$ to $C^{1,\alpha}(\overline{\Omega_{\omega,\tilde{\delta}}}, \R^n)$ which maps $\tilde{\mathcal{W}}$ to $\mathcal{A}^*_{\overline{\Omega_{\omega,\tilde{\delta}}}}$ and such that $\mathbf{E}[\tilde{\phi}] = \tilde{\Phi}$ and $\mathbf{E}[\phi]_{|\partial\Omega} = \phi$, for all $\phi \in \tilde{\mathcal{W}}$.
    \end{enumerate}
\end{lemma}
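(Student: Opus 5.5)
The plan is to build the extension explicitly in the ``tubular coordinates'' given by the field $\omega$ of Lemma \ref{lemma Omega tub}, in such a way that the extension depends \emph{affinely} on the boundary datum. Real analyticity of $\mathbf{E}$ is then automatic, and statement (ii) reduces to openness arguments.

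\textbf{Step 1: tubular coordinates.} First I would show that, possibly after shrinking $\delta_\Omega$, the map
\[
P(x,s):=x+s\omega(x), \qquad (x,s)\in\partial\Omega\times[-\delta,\delta],
\]
is a $C^{1,\alpha}$ diffeomorphism onto $\overline{\Omega_{\omega,\delta}}$ for every $\delta\in(0,\delta_\Omega)$. Its differential at $s=0$ is injective because $\omega\cdot\nu_\Omega>\frac12$, so the inverse function theorem gives local invertibility. Global injectivity for small $\delta$ follows from the standard compactness argument: otherwise there would be pairs $(x_k,s_k)\neq(x_k',s_k')$ with $s_k,s_k'\to0$ and equal images, which contradicts local injectivity near $\partial\Omega$. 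Writing $P^{(-1)}=(\pi,\varsigma)$, both the ``projection'' $\pi$ and the ``height'' $\varsigma$ are then of class $C^{1,\alpha}$ on $\overline{\Omega_{\omega,\delta}}$.

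\textbf{Step 2: a transversal field on the target.} Next I would choose a field $\tilde\omega\in C^{1,\alpha}(\partial\Omega,\R^n)$ with $|\tilde\omega|=1$ and $\tilde\omega\cdot(\nu_{\Omega[\tilde\phi]}\circ\tilde\phi)>\frac12$ on $\partial\Omega$. Since the normal $\nu_{\Omega[\tilde\phi]}\circ\tilde\phi$ is only $C^{0,\alpha}$, this is done by uniformly approximating it with a smooth field and then normalizing.

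\textbf{Step 3: the operator.} I would then define
\[
\mathbf{E}[\phi](y):=\phi(\pi(y))+\varsigma(y)\,\tilde\omega(\pi(y)), \qquad y\in\overline{\Omega_{\omega,\tilde\delta}}.
\]
Since $\pi$ is a $C^{1,\alpha}$ map, the composition $\phi\mapsto\phi\circ\pi$ is linear and continuous from $C^{1,\alpha}(\partial\Omega,\R^n)$ to $C^{1,\alpha}(\overline{\Omega_{\omega,\tilde\delta}},\R^n)$. Hence $\mathbf{E}$ is affine and continuous, and therefore real analytic. Because $\varsigma=0$ on $\partial\Omega$, we have $\mathbf{E}[\phi]_{|\partial\Omega}=\phi$. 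Set $\tilde\Phi:=\mathbf{E}[\tilde\phi]$.

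\textbf{Step 4: $\tilde\Phi\in\mathcal{A}^*_{\overline{\Omega_{\omega,\tilde\delta}}}$, proving (i).} At points of $\partial\Omega$, $\d\tilde\Phi$ maps tangent vectors through the injective $\d\tilde\phi$ onto the tangent space of $\tilde\phi(\partial\Omega)$. It maps $\omega$ to $\tilde\omega$, which is transversal to that tangent space. So $\d\tilde\Phi$ is injective on $\partial\Omega$, and by uniform continuity of $\d\tilde\Phi$ it stays injective on $\overline{\Omega_{\omega,\tilde\delta}}$ for $\tilde\delta$ small.

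Global injectivity of $\tilde\Phi$ for small $\tilde\delta$ follows by the same compactness argument as in Step 1, now using injectivity of $\tilde\phi$.

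For the inclusion $\tilde\Phi(\Omega^+_{\omega,\tilde\delta})\subseteq\Omega[\tilde\phi]$, I would argue by connectedness. The set $\Omega^+_{\omega,\tilde\delta}$ is connected by Lemma \ref{lemma Omega tub}(i). Its image avoids $\tilde\phi(\partial\Omega)$ by injectivity of $\tilde\Phi$. Finally, for small $s<0$ the point $\tilde\phi(x)+s\tilde\omega(x)$ lies in the bounded component, because $\tilde\omega$ points outward.

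\textbf{Step 5: statement (ii).} The set $\mathcal{A}_{\overline{\Omega_{\omega,\tilde\delta}}}$ is open, and $\mathcal{A}^*_{\overline{\Omega_{\omega,\tilde\delta}}}$ is open in it by Lemma \ref{lemma Omega tub}(iii). Since $\mathbf{E}$ is continuous, the preimage $\mathbf{E}^{(-1)}\bigl(\mathcal{A}^*_{\overline{\Omega_{\omega,\tilde\delta}}}\bigr)$ is open. Intersecting it with $\mathcal{A}_{\partial\Omega}$ gives an open set containing $\tilde\phi$, which we take as $\tilde{\mathcal W}$.

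\textbf{Main obstacle.} I expect the main difficulty to be the uniform global injectivity in Steps 1 and 4, together with the regularity of $\pi$ and $\varsigma$. Both hinge on a careful compactness argument combined with the $C^{1,\alpha}$ inverse function theorem.
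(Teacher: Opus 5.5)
Your argument for (i) is correct. The tubular coordinates $P(x,s)=x+s\omega(x)$, the $C^{1,\alpha}$ unit field $\tilde\omega$ transversal to $\tilde\phi(\partial\Omega)$, the definition $\tilde\Phi(P(x,s))=\tilde\phi(x)+s\tilde\omega(x)$, the compactness argument for injectivity, and the use of the connectedness of $\Omega^+_{\omega,\tilde\delta}$ for the inclusion in $\Omega[\tilde\phi]$ all work. (The paper does not prove this lemma; it quotes it from \cite{DaLu23} and \cite{LaRo08}.)

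The gap is in (ii), which is quantified differently from what you prove. It says that for \emph{any} $\tilde\delta$ and \emph{any} $\tilde\Phi\in\mathcal{A}^*_{\overline{\Omega_{\omega,\tilde\delta}}}$ with $\tilde\Phi_{|\partial\Omega}=\tilde\phi$, there is a real analytic $\mathbf{E}$ on that tube with $\mathbf{E}[\tilde\phi]=\tilde\Phi$. Your operator $\mathbf{E}[\phi]=\phi\circ\pi+\varsigma\,\tilde\omega\circ\pi$ only returns the particular extension you built in Steps 3--4, on the particular $\tilde\delta$ you shrank to. So you have shown only that \emph{some} triple $(\tilde\delta,\tilde\Phi,\mathbf{E})$ exists. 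This matters in the paper. Remark \ref{remark Phi_0} and the proof of Lemma \ref{lemma diffeo} apply (ii) with $\tilde\phi=\phi_0$ and $\tilde\Phi=\mathrm{id}_{\overline{\Omega^i_{\omega,\tilde\delta}}}$, and the bound $\|D\Psi[\phi]-I_n\|<c$ there rests on $\mathbf{E}[\phi_0]=\mathrm{id}$. Your Step 2, which approximates the normal, does not produce the identity unless you deliberately take $\tilde\omega=\omega$.

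The repair is to decouple the two roles in your formula, which can be rewritten as $\mathbf{E}[\phi]=\tilde\Phi+(\phi-\tilde\phi)\circ\pi$. Given $(\tilde\delta,\tilde\Phi)$ as in (i), set $\mathbf{E}[\phi]:=\tilde\Phi+E_0[\phi-\tilde\phi]$, where $E_0$ is any bounded linear extension operator from $C^{1,\alpha}(\partial\Omega,\R^n)$ to $C^{1,\alpha}(\overline{\Omega_{\omega,\tilde\delta}},\R^n)$. Then:
\begin{itemize}
\item $\mathbf{E}$ is affine and continuous, hence real analytic;
\item $\mathbf{E}[\phi]_{|\partial\Omega}=\phi$ and $\mathbf{E}[\tilde\phi]=\tilde\Phi\in\mathcal{A}^*_{\overline{\Omega_{\omega,\tilde\delta}}}$;
\item your Step 5 then produces $\tilde{\mathcal W}$ verbatim.
\end{itemize}
Because $\tilde\delta$ is now prescribed, your ``possibly after shrinking $\delta_\Omega$'' is no longer available, so you cannot assume $P$ is injective on the whole tube. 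Build $E_0$ with a cutoff instead. Choose $\delta'\in(0,\tilde\delta]$ such that $P$ is a $C^{1,\alpha}$-diffeomorphism of $\partial\Omega\times[-\delta',\delta']$ onto $\overline{\Omega_{\omega,\delta'}}$, and choose $\chi\in C^\infty_c(\Omega_{\omega,\delta'})$ with $\chi=1$ near $\partial\Omega$. Define $E_0[\psi]$ as the restriction to $\overline{\Omega_{\omega,\tilde\delta}}$ of the function equal to $\chi\,(\psi\circ\pi)$ on $\Omega_{\omega,\delta'}$ and to $0$ elsewhere.
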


\begin{remark}\label{remark Phi_0}
    From a careful inspection of the proof of \cite[Props.\,2.7\,\&\,2.8]{LaRo08}, one can deduce that for $\tilde{\phi} = \phi_0=\mathrm{id}_{\partial\Omega^i}$ and under the assumption that $\overline{\Omega^i} \subset \Omega^o$, then it is always possible to choose the extension $\tilde{\Phi}$ provided by Lemma \ref{lemma extension operator} to be equal to $\mathrm{id}_{\overline{\Omega^i_{\omega,\tilde{\delta}}}}$. Moreover, possibly restricting the neighborhood $\tilde{\mathcal{W}}$ of $\mathrm{id}_{\partial\Omega^i}$ in $\mathcal{A}^*_{\overline{\Omega^i_{\omega,\tilde{\delta}}}}$, we can assume that $\mathbf{E}[\phi]( \overline{\Omega^i_{\omega,\tilde{\delta}}}) \subset \Omega^o$ for all $\phi \in \tilde{\mathcal{W}}$.
\end{remark}

Finally, we recall a real analytic result for change of variables in integrals and pullback of the outer normal vector field (see \cite[Prop.\,1]{La07}). 

\begin{lemma}\label{lemma change of variable}
    Let $\Omega$ be as in Lemma \ref{lemma Omega tub}. Then the following statements hold.
    \begin{enumerate}
        \item[(i)] For each $\phi \in \mathcal{A}_{\partial\Omega}$ there exists a unique positive $\tilde{\sigma}_n[\phi] \in C^{1,\alpha}(\partial\Omega)$ such that
        \begin{equation*}
            \int_{\phi(\partial\Omega)} f(s) \d \sigma_s = \int_{\partial\Omega} f \circ \phi(y) \,  \tilde{\sigma}_n[\phi](y) \d \sigma_y \quad \text{for all } f \in L^1(\phi(\partial\Omega)).
        \end{equation*}
        Moreover, the map that takes $\phi$ to $\tilde{\sigma}_n[\phi]$ is real analytic from $\mathcal{A}_{\partial\Omega}$ to $C^{0,\alpha}(\partial\Omega)$.

        \item[(ii)] The map from $\mathcal{A}_{\partial\Omega}$ to $C^{0,\alpha}(\partial\Omega, \R^n)$ that takes $\phi$ to $\nu_\phi \circ \phi$ is real analytic.
    \end{enumerate}
\end{lemma}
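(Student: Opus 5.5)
The plan is to write both $\tilde\sigma_n[\phi]$ and $\nu_{\Omega[\phi]}\circ\phi$ as explicit algebraic expressions in the first-order tangential derivatives of $\phi$. Real analyticity then follows from three facts: these derivatives depend linearly and continuously on $\phi$; $C^{0,\alpha}(\partial\Omega)$ is a Banach algebra; and square roots and reciprocals are real analytic on the open set of positive functions.

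\emph{Step 1 (local formula).} Cover $\partial\Omega$ by finitely many $C^{1,\alpha}$ charts $\gamma_j\colon U_j\subseteq\R^{n-1}\to\partial\Omega$. For $y=\gamma_j(\xi)$ set $G(\xi):=D\gamma_j(\xi)^\top D\gamma_j(\xi)$ and $G_\phi(\xi):=D(\phi\circ\gamma_j)(\xi)^\top D(\phi\circ\gamma_j)(\xi)$. Define
\[
\tilde\sigma_n[\phi](y):=\sqrt{\det G_\phi(\xi)}\,\big/\sqrt{\det G(\xi)} .
\]
This ratio does not depend on the chart, since both numerator and denominator pick up the same factor $|\det D(\gamma_j^{(-1)}\circ\gamma_k)|$ under a change of chart. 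Since $\phi$ is injective, $\phi\circ\gamma_j$ is a chart of $\phi(\partial\Omega)$. The classical surface-integral formula on each chart, glued with a partition of unity subordinate to the cover, gives the change of variables identity for all $f\in L^1(\phi(\partial\Omega))$.

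\emph{Step 2 (positivity and uniqueness).} Positivity holds because $\d\phi(y)$ is injective, so $D(\phi\circ\gamma_j)$ has rank $n-1$ and $\det G_\phi>0$. By compactness of $\partial\Omega$ and continuity, $\tilde\sigma_n[\phi]$ is bounded below by a positive constant. For uniqueness, suppose two continuous densities satisfy the identity for all $f$. Testing with $f=g\circ\phi^{(-1)}$ for arbitrary $g\in L^\infty(\partial\Omega)$ shows the two densities agree almost everywhere, hence everywhere by continuity. As for regularity, the formula only involves first derivatives of $\phi$, so it naturally yields $C^{0,\alpha}$. This is exactly the space in which analyticity is asserted, so I would prove everything in $C^{0,\alpha}(\partial\Omega)$.

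\emph{Step 3 (analyticity of $\tilde\sigma_n$).} Use the global tangential derivatives $M_{lk}[\phi]:=\nu_l\partial_k\phi-\nu_k\partial_l\phi$, or equivalently the chart derivatives localized by the partition of unity. These are bounded linear maps $C^{1,\alpha}(\partial\Omega,\R^n)\to C^{0,\alpha}(\partial\Omega)$. The map $\phi\mapsto\det G_\phi$ is a polynomial in them, hence real analytic into $C^{0,\alpha}$ because $C^{0,\alpha}$ is a Banach algebra. The set of functions in $C^{0,\alpha}$ with positive minimum is open, and the superposition maps $h\mapsto\sqrt h$ and $h\mapsto h^{-1}$ are real analytic on it: expand in power series around any $h_0$ with $\min h_0>0$. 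Composing these with the fixed positive factor $1/\sqrt{\det G}$ gives real analyticity of $\phi\mapsto\tilde\sigma_n[\phi]$ on $\mathcal{A}_{\partial\Omega}$.

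\emph{Step 4 (normal).} Let $N_\phi(y)$ be the generalized cross product of the columns of $D(\phi\circ\gamma_j)(\xi)$. Its components are the $(n-1)$-minors, so $N_\phi$ is again polynomial in the tangential derivatives. It is orthogonal to $T_{\phi(y)}\phi(\partial\Omega)$, and $|N_\phi|=\sqrt{\det G_\phi}>0$. Hence
\[
\nu_{\Omega[\phi]}\circ\phi=\varepsilon\,N_\phi/|N_\phi|,\qquad \varepsilon\in\{\pm1\},
\]
and $\phi\mapsto N_\phi/|N_\phi|$ is real analytic into $C^{0,\alpha}(\partial\Omega,\R^n)$ by the argument of Step 3.

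The main obstacle is fixing the sign $\varepsilon$ so that it is globally consistent and locally constant in $\phi$. I would fix orientations of the charts coherently with $\nu_\Omega$, so that for $\phi$ equal to the identity one gets $\varepsilon=1$. Then $\varepsilon$ is given by the orientation of $\phi$, namely the sign of the degree of $\phi$ with respect to points of $\Omega[\phi]$, which is continuous in $\phi$ and locally constant. This uses the Jordan--Leray separation recalled above: the outward direction for $\Omega[\phi]$ is determined by the degree. Since real analyticity is a local property, a locally constant sign suffices, and the statement follows on all of $\mathcal{A}_{\partial\Omega}$.
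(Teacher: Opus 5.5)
The paper gives no proof of this lemma; it quotes it from \cite[Prop.\,1]{La07}. Your argument is a correct, self-contained proof by an intrinsic, chart-based route: Gram determinants and generalized cross products of $D(\phi\circ\gamma_j)$, analyticity of polynomials and of $h\mapsto h^{-1}$ and $h\mapsto\sqrt{h}$ on positive elements of the Banach algebra $C^{0,\alpha}(\partial\Omega)$, and degree theory for the orientation.

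You are right to stop at $C^{0,\alpha}$, but you should say explicitly that the membership $\tilde\sigma_n[\phi]\in C^{1,\alpha}(\partial\Omega)$ written in (i) is false in general, because one derivative is lost. On the unit circle, $\phi(\theta)=(1+\epsilon g(\theta))(\cos\theta,\sin\theta)$ gives $\tilde\sigma_n[\phi]^2=(1+\epsilon g)^2+\epsilon^2 g'^2$. This is not even $C^1$ if $g'=1+|\theta|^\alpha$ near $\theta=0$. The correct statement is $C^{0,\alpha}$, which is also the target space of the analyticity claim and is what the rest of the paper needs.

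The only thin spot is Step 4. The sign of the degree with respect to points of a \emph{moving} domain $\Omega[\phi]$ is not obviously locally constant. The complete chain is as follows.
\begin{enumerate}
\item[(a)] $\varepsilon(y)$ is continuous with values in $\{\pm1\}$, being the product of two continuous unit normal fields. Moreover $\partial\Omega$ is connected, because $\Omega$ and $\Omega^-$ are. Hence $\varepsilon$ is constant in $y$.
\item[(b)] Pull back Gauss's identity $\int_{\phi(\partial\Omega)}(s-p)\cdot\nu_{\Omega[\phi]}(s)\,|s-p|^{-n}\,d\sigma_s=|\mathbb{S}^{n-1}|$, for $p\in\Omega[\phi]$, via your Step 1. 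Comparing with Kronecker's integral for the winding number $w(\phi,p)$ gives $\varepsilon\,w(\phi,p)=1$.
\item[(c)] Fix $p\in\Omega[\phi_1]$ and take $\|\phi-\phi_1\|_\infty<\mathrm{dist}(p,\phi_1(\partial\Omega))$. Homotopy invariance gives $w(\phi,p)=w(\phi_1,p)\neq0$. This forces $p\in\Omega[\phi]$, hence equal signs for $\phi$ and $\phi_1$.
\end{enumerate}

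The paper's own toolkit offers a way around degree theory. Locally around any $\tilde\phi$, take $\Phi=\mathbf{E}[\phi]$ from Lemma \ref{lemma extension operator}. Then on $\partial\Omega$ one has $\tilde\sigma_n[\phi]=|\det D\Phi|\,|(D\Phi)^{-\top}\nu_\Omega|$ (Nanson's formula) and $\nu_{\Omega[\phi]}\circ\phi=(D\Phi)^{-\top}\nu_\Omega/|(D\Phi)^{-\top}\nu_\Omega|$. The orientation is forced: $\Phi(\Omega^-_{\omega,\tilde\delta})\subseteq\Omega[\phi]^-$ (Lemma \ref{lemma Omega tub}(iii)) makes $D\Phi\,\omega$ point outward, while $(D\Phi)^{-\top}\nu_\Omega\cdot D\Phi\,\omega=\nu_\Omega\cdot\omega>\tfrac12$.

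That route hides the topology inside the extension lemma. Yours needs only charts and elementary degree theory, and it makes orientation-reversing $\phi$ (where $\varepsilon=-1$) explicit.
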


In order to shorten our notation, if $A$ is a subset of $\R^n$, and $h$ is a map from $A$ to $\R^n$, we will denote by $h^T$ the map from $[0,T] \times A$ to $[0,T] \times \R^n$ defined by
\begin{equation*}
    h^T(t,x) := (t, h(x)) \quad \forall (t,x) \in [0,T] \times A.
\end{equation*}
Let $\phi \in \mathcal{A}_{\partial\Omega}$ and $\mu \in C^0([0,T] \times \partial\Omega)$. To work with a space of densities that is not $\phi$-dependent, following \cite{DaLu23}, it makes sense to consider a density $\mu \circ (\phi^T)^{(-1)}$, and to define the single layer potential via the corresponding form given by Definition \ref{def single double potential}, namely,
\begin{align*}
    v_{\Omega}[\mu \circ (\phi^T)^{(-1)}](t,x) &:= \int_{0}^{t} \int_{\phi(\partial \Omega)} S_n(t-\tau,x-y) \mu \circ (\phi^T)^{(-1)} (\tau, y) \d \sigma_y \d \tau,
\end{align*}
for all $(t,x) \in [0, T] \times \mathbb{R}^n$. Moreover, we can define the boundary integral operator associated with the $\phi$-pullback of the single layer potential, that is 
\begin{equation*}
    V_{\phi,\partial\Omega}[\mu] := v_{\Omega}[\mu \circ (\phi^T)^{(-1)}] \circ \phi^T \quad \text{on } [0, T] \times \partial\Omega.
\end{equation*}
We also consider the operator defined by
\begin{equation*}
    W^\ast_{\phi,\partial\Omega}[\mu](t,x) :=  \int_{0}^{t} \int_{\phi(\partial \Omega)} \nabla S_n(t-\tau, \phi(x) -y) \cdot \nu_\phi(\phi(x)) \mu \circ (\phi^T)^{(-1)} (\tau, y) \d \sigma_y \d \tau,
\end{equation*}
for all $(t,x) \in [0, T] \times \partial\Omega$.

Finally, let $\Phi \in \mathcal{A}^*_{\overline{\Omega_{\omega,\tilde{\delta}}}}$ and let $\mu \in C^0([0,T] \times \partial\Omega)$. We define 
\begin{equation*}
     V^{\pm}_{\Phi,\partial\Omega}[\mu] := v^{\pm}_{\Omega}[\mu \circ (\Phi^T)^{(-1)}] \circ \Phi^T \quad \text{on } [0, T] \times \overline{\Omega^\pm_{\omega,\tilde{\delta}}}.
\end{equation*}

We now state two theorems on the dependence of layer heat potentials upon perturbation of the support and of the density, which will be used in the application in Section \ref{sec: application} of our main result to boundary value problems. For the first theorem, we refer to \cite[Thm.\,5.3]{DaLu23}, while for the second theorem, we refer to \cite[Thm.\,5.4]{DaLu23}.

\begin{theorem}\label{thm dependence V Phi W Phi}
    Let $\Omega, \omega,\delta_{\Omega}$ be as in Lemma \ref{lemma Omega tub}. Then, the map that takes $\Phi \in \mathcal{A}^*_{\overline{\Omega_{\omega,\tilde{\delta}}}}$ to 
    \begin{equation*}
        V_{\Phi,\partial\Omega}^{\pm} \in \mathcal{L} \left( C^{\frac{\alpha}{2}, \alpha}_0([0,T] \times \partial\Omega), C^{\frac{1+\alpha}{2}, 1+\alpha}_0([0,T] \times \overline{\Omega^{\pm}_{\omega,\tilde{\delta}}}) \right)
    \end{equation*}
    is of class $C^\infty$.
\end{theorem}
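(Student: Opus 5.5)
The plan is to write the pulled-back potential as an integral on the fixed set $[0,T]\times\partial\Omega$ and to move all the $\Phi$-dependence into the kernel. By Lemma \ref{lemma change of variable}\,(i), applied with $\phi=\Phi_{|\partial\Omega}$, we have for $(t,x)\in[0,T]\times\overline{\Omega^\pm_{\omega,\tilde\delta}}$
\[
V^\pm_{\Phi,\partial\Omega}[\mu](t,x)=\int_0^t\int_{\partial\Omega} S_n\bigl(t-\tau,\Phi(x)-\Phi(y)\bigr)\,\mu(\tau,y)\,\tilde\sigma_n[\Phi_{|\partial\Omega}](y)\,\d\sigma_y\,\d\tau .
\]
The restriction map $\Phi\mapsto\Phi_{|\partial\Omega}$ is linear and continuous. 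Also $\tilde\sigma_n$ is real analytic by Lemma \ref{lemma change of variable}, and multiplication by it is a bilinear continuous map on $C_0^{\frac\alpha2;\alpha}$. Hence it suffices to prove that $\Phi\mapsto\mathcal V_\Phi$ is $C^\infty$, where $\mathcal V_\Phi[\mu](t,x):=\int_0^t\int_{\partial\Omega}S_n(t-\tau,\Phi(x)-\Phi(y))\mu(\tau,y)\,\d\sigma_y\d\tau$, as a map into $\mathcal L\bigl(C_0^{\frac\alpha2;\alpha}([0,T]\times\partial\Omega),C_0^{\frac{1+\alpha}2;1+\alpha}([0,T]\times\overline{\Omega^\pm_{\omega,\tilde\delta}})\bigr)$.

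For this I would introduce a scale of parabolic kernel classes, modelled on those of Dalla Riva and Lanza de Cristoforis. Fix an exponent $s\ge0$. A kernel $K(t,x,y)$ on $]0,T]\times\overline{\Omega^\pm_{\omega,\tilde\delta}}\times\partial\Omega$ belongs to the class $\mathcal K_s$ if it obeys a Gaussian bound of the form $|K|\le C\,t^{-s}e^{-c|x-y|^2/t}$, together with matching H\"older-type bounds in $x$, in $y$ and in $t$. The first step is an abstract mapping lemma. For the exponent $s=n/2$ (the order of $S_n$), the operator with kernel $K$ maps $C_0^{\frac\alpha2;\alpha}$ continuously into $C_0^{\frac{1+\alpha}2;1+\alpha}$ whenever its spatial gradient splits as a ``single-layer-type'' part plus a ``double-layer-type'' part. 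The double-layer part is treated via the jump relations of Theorem \ref{thmsl}\,(iii) and the tangential-derivative identities on $\partial\Omega$. Moreover, the operator norm should be bounded by the class norm.

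The second step is to differentiate in $\Phi$. The $k$-th differential of $\Phi\mapsto S_n(t-\tau,\Phi(x)-\Phi(y))$ in the directions $h_1,\dots,h_k$ equals $D^k_zS_n(t-\tau,\Phi(x)-\Phi(y))[h_1(x)-h_1(y),\dots,h_k(x)-h_k(y)]$. Since the $h_j$ are Lipschitz and $\Phi$ is bi-Lipschitz on the compact tube (because $\Phi\in\mathcal A_{\overline{\Omega_{\omega,\tilde\delta}}}$), we get $|h_j(x)-h_j(y)|\lesssim|\Phi(x)-\Phi(y)|$. Combined with the parabolic homogeneity $|D^k_zS_n(t,z)|\,|z|^k\lesssim t^{-n/2}e^{-c|z|^2/t}$, this shows that every derivative is a kernel of the same class as $S_n$ itself, depending multilinearly and continuously on $(h_1,\dots,h_k)$. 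I would then prove that $\Phi\mapsto S_n(\cdot,\Phi(x)-\Phi(y))$ is $C^\infty$ as a map into the kernel class. I would do this by writing Taylor remainders in integral form and checking the class estimates uniformly for $\Phi$ in a small ball, where $\Phi$ stays uniformly bi-Lipschitz and $\det D\Phi$ stays away from zero. Composing with the continuous linear map from kernels to operators given by step one then yields the $C^\infty$ regularity of $\Phi\mapsto V^\pm_{\Phi,\partial\Omega}$.

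The main obstacle is the $C^{1+\alpha}$ spatial regularity up to the boundary of the tube, uniformly in the kernel class. We have $\nabla_x\mathcal V_\Phi[\mu]=D\Phi(x)^\top\int\!\!\int\nabla S_n(t-\tau,\Phi(x)-\Phi(y))\mu\,\d\sigma\d\tau$. This is a strongly singular, double-layer-type operator, and $D\Phi$ is only $C^{0,\alpha}$. I would handle it by decomposing $\nabla S_n(t,\Phi(x)-\Phi(y))$ into its components along $D\Phi(y)$ applied to tangent vectors and along the pulled-back normal $\nu_\Phi\circ\Phi$. The normal component is controlled by the analyticity of $\nu_\phi\circ\phi$ in Lemma \ref{lemma change of variable}\,(ii). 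The tangential components I would integrate by parts on $\partial\Omega$, which moves the derivative onto $\mu$ only in the weak form allowed by the parabolic H\"older seminorm. The remaining task is to verify that each resulting term again satisfies class estimates that depend smoothly on $\Phi$, with the H\"older estimate in $t$ of exponent $\frac{1+\alpha}{2}$ obtained from the Gaussian time-decay.
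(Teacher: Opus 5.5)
The paper gives no proof of this statement; it imports it from Dalla Riva and Luzzini \cite[Thm.\,5.3]{DaLu23}. Your reduction via Lemma \ref{lemma change of variable}, including absorbing $\tilde\sigma_n[\Phi_{|\partial\Omega}]$ into the density, is fine. For each \emph{fixed} $\Phi$, boundedness of $V^\pm_{\Phi,\partial\Omega}$ already follows from Theorem \ref{thmsl}\,(ii) on $\Omega[\Phi_{|\partial\Omega}]$ composed with $\Phi$. So the whole content of the theorem is the $C^\infty$ dependence on $\Phi$, and there your argument has a genuine gap.

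\textbf{Size bounds do not control the operator norm.} In your second step you put every derivative kernel $D^k_zS_n(t,\Phi(x)-\Phi(y))[h_1(x)-h_1(y),\dots]$ ``in the same class as $S_n$'' using only Gaussian size bounds. Your first step is then supposed to bound operator norms by that class norm. Size and H\"older bounds cannot do this. Take $K(t,z)=t^{-\frac{n+1}{2}}z_n e^{-|z|^2/(4t)}$. It satisfies $|K|\le Ct^{-\frac n2}e^{-|z|^2/(8t)}$ and $|\nabla_zK|\le Ct^{-\frac{n+1}{2}}e^{-|z|^2/(8t)}$. Yet on the model boundary $\{y_n=0\}$ with density $\mu(\tau,y)=\tau$, the $x_n$-derivative of the associated potential at $(x',s)$ blows up like $\log(1/|s|)$ as $s\to0$. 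So that operator does not even map into $C^1$ up to the boundary. Boundedness is a matter of cancellation, which is why you needed the proviso that the gradient ``splits into single-layer-type plus double-layer-type parts''. That proviso is never checked for the derivative kernels or for the Taylor remainders. Nor is it built into a norm in which $\Phi\mapsto S_n(t,\Phi(x)-\Phi(y))$ is shown to be $C^\infty$.

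\textbf{The proviso does not hold in the form you state.} Already for $k=1$, the gradient $\nabla_x$ of $\nabla S_n(t,\Phi(x)-\Phi(y))\cdot(h(x)-h(y))$ contains $D\Phi(x)^\top D^2S_n(t,\Phi(x)-\Phi(y))\,(h(x)-h(y))$. This term has the singularity of $\nabla S_n$, but it is neither weakly singular nor of the form $\nabla S_n\cdot\nu_\Phi$. What saves it is that $h(x)-h(y)=Dh(y)(x-y)+O(|x-y|^{1+\alpha})$, so its leading part is odd in $x-y$. This uses the $C^{1,\alpha}$ regularity of $h$, which your estimate $|h(x)-h(y)|\lesssim|\Phi(x)-\Phi(y)|$ throws away; a Lipschitz bound alone is compatible with the bad kernel above.

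\textbf{The ``main obstacle'' paragraph.} It treats only the base kernel $\nabla S_n(t,\Phi(x)-\Phi(y))$, whose fixed-$\Phi$ boundedness is already given by Theorem \ref{thmsl}. It then defers exactly the step that matters (``verify that each resulting term again satisfies class estimates that depend smoothly on $\Phi$''). Also, integrating tangential derivatives by parts ``onto $\mu$'' is not available for $\mu\in C_0^{\frac\alpha2;\alpha}$. The workable device is to subtract the value of $\mu$ at a nearby boundary point and use that tangential derivatives integrate to zero over the closed surface $\partial\Omega$.

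\textbf{What would close the gap.} You would need:
\begin{itemize}
\item a kernel class whose norm records this parity structure, with $O(|x-y|^{1+\alpha})$ remainders;
\item a mapping theorem into $C_0^{\frac{1+\alpha}{2};1+\alpha}$, uniform up to $\partial\Omega$ from both sides and including the time-H\"older estimate of order $\frac{1+\alpha}{2}$;
\item a proof that $\Phi\mapsto S_n(t,\Phi(x)-\Phi(y))$ is $C^\infty$ into that class.
\end{itemize}
That is precisely the substance of the result the paper takes from \cite{DaLu23}.
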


\begin{theorem}\label{thm dependence V phi W phi}
    Let $\Omega, \omega,\delta_{\Omega}$ be as in Lemma \ref{lemma Omega tub}. Then the following statements hold. 
    \begin{itemize}
        \item[(i)] The map that takes $\phi \in \mathcal{A}_{\partial \Omega}$ to 
        \begin{equation*}
            V_{\phi,\partial\Omega} \in \mathcal{L} \left( C^{\frac{\alpha}{2}, \alpha}_0([0,T] \times \partial\Omega), C^{\frac{1+\alpha}{2}, 1+\alpha}_0([0,T] \times \partial\Omega) \right)
        \end{equation*}
        is of class $C^\infty$.  

        \item[(ii)] The map that takes $\phi \in \mathcal{A}_{\partial \Omega}$ to 
        \begin{equation*}
            W^\ast_{\phi,\partial\Omega} \in \mathcal{L} \left( C^{\frac{\alpha}{2}, \alpha}_0([0,T] \times \partial\Omega), C^{\frac{\alpha}{2}, \alpha}_0([0,T] \times \partial\Omega) \right)
        \end{equation*}
        is of class $C^\infty$.   
    \end{itemize}
\end{theorem}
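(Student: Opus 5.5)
The plan is to reduce both statements to Theorem \ref{thm dependence V Phi W Phi} by means of the extension operator of Lemma \ref{lemma extension operator}. Smoothness is a local property, so it suffices to fix $\tilde\phi\in\mathcal{A}_{\partial\Omega}$ and prove $C^\infty$ dependence on a neighbourhood of $\tilde\phi$. By Lemma \ref{lemma extension operator}, there exist $\tilde\delta\in(0,\delta_\Omega)$, an open neighbourhood $\tilde{\mathcal W}$ of $\tilde\phi$ in $\mathcal{A}_{\partial\Omega}$, and a real analytic operator $\mathbf E$. The operator $\mathbf E$ maps $\tilde{\mathcal W}$ into $\mathcal{A}^*_{\overline{\Omega_{\omega,\tilde\delta}}}$ and satisfies $\mathbf E[\phi]_{|\partial\Omega}=\phi$. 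By Theorem \ref{thm dependence V Phi W Phi}, the map
\[
\phi\mapsto V^{+}_{\mathbf E[\phi],\partial\Omega}\in\mathcal L\bigl(C_0^{\frac\alpha2;\alpha}([0,T]\times\partial\Omega),C_0^{\frac{1+\alpha}2;1+\alpha}([0,T]\times\overline{\Omega^+_{\omega,\tilde\delta}})\bigr)
\]
is the composition of a $C^\infty$ map with a real analytic one. It is therefore $C^\infty$ on $\tilde{\mathcal W}$.

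\emph{Statement (i).} By Theorem \ref{thmsl}(i), $v_\Omega$ is continuous across the support $\phi(\partial\Omega)$. Since $\mathbf E[\phi]=\phi$ on $\partial\Omega\subseteq\overline{\Omega^+_{\omega,\tilde\delta}}$, we get
\[
V_{\phi,\partial\Omega}[\mu]=\bigl(V^+_{\mathbf E[\phi],\partial\Omega}[\mu]\bigr)_{|[0,T]\times\partial\Omega}.
\]
Let $R$ denote the restriction operator to $[0,T]\times\partial\Omega$. It is linear and bounded from $C_0^{\frac{1+\alpha}2;1+\alpha}([0,T]\times\overline{\Omega^+_{\omega,\tilde\delta}})$ to $C_0^{\frac{1+\alpha}2;1+\alpha}([0,T]\times\partial\Omega)$, because $\partial\Omega$ is a $C^{1,\alpha}$ part of the boundary of $\Omega^+_{\omega,\tilde\delta}$. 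Composing with $R$ on the left is a bounded linear operation on $\mathcal L$-spaces, so $\phi\mapsto V_{\phi,\partial\Omega}$ is $C^\infty$.

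\emph{Statement (ii).} I will use the jump relation of Theorem \ref{thmsl}(iii) on the $C^{1,\alpha}$ domain $\Omega[\phi]$, applied to the density $\mu\circ(\phi^T)^{(-1)}$. This density lies in $C_0^{\frac\alpha2;\alpha}([0,T]\times\phi(\partial\Omega))$ because $\phi$ is a $C^{1,\alpha}$ diffeomorphism onto its image. The relation gives
\[
W^\ast_{\phi,\partial\Omega}[\mu]=\bigl(\nabla v^+_{\Omega[\phi]}[\mu\circ(\phi^T)^{(-1)}]\circ\phi^T\bigr)\cdot(\nu_\phi\circ\phi)-\tfrac12\mu .
\]
Write $\Phi=\mathbf E[\phi]$. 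By the chain rule, on $[0,T]\times\overline{\Omega^+_{\omega,\tilde\delta}}$,
\[
(\nabla v^+\circ\Phi^T)=(D\Phi)^{-\top}\,\nabla\bigl(V^+_{\Phi,\partial\Omega}[\mu]\bigr).
\]
Here $D\Phi$ is invertible, since $\Phi\in\mathcal{A}_{\overline{\Omega_{\omega,\tilde\delta}}}$. The map $\phi\mapsto(D\mathbf E[\phi])^{-\top}\in C^{0,\alpha}(\overline{\Omega_{\omega,\tilde\delta}},\R^{n\times n})$ is real analytic, because matrix inversion is analytic on invertible $C^{0,\alpha}$-valued matrices.

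I then compose the following maps.
\begin{itemize}
\item The bounded linear map $u\mapsto\nabla u$ from $C_0^{\frac{1+\alpha}2;1+\alpha}$ to $C_0^{\frac\alpha2;\alpha}$.
\item Restriction to $[0,T]\times\partial\Omega$.
\item The bounded bilinear multiplication $C^{0,\alpha}(\partial\Omega)\times C_0^{\frac\alpha2;\alpha}([0,T]\times\partial\Omega)\to C_0^{\frac\alpha2;\alpha}([0,T]\times\partial\Omega)$.
\item The real analytic dependence of $\nu_\phi\circ\phi$ on $\phi$ given by Lemma \ref{lemma change of variable}(ii).
\end{itemize}
These show that $\phi\mapsto W^\ast_{\phi,\partial\Omega}$ is $C^\infty$ into $\mathcal L(C_0^{\frac\alpha2;\alpha},C_0^{\frac\alpha2;\alpha})$. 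Each bilinear operation induces a bounded bilinear operation between the corresponding $\mathcal L$-spaces, and the constant term $-\frac12 I$ is harmless.

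I expect the main obstacle to be justifying these identities rigorously. First, the jump relation must hold for the pushed-forward density on $\phi(\partial\Omega)$ with the correct surface measure; this follows from the definition, since $v_\Omega[\mu\circ(\phi^T)^{(-1)}]$ integrates over $\phi(\partial\Omega)$ with respect to $\d\sigma$. Second, all the products must preserve the vanishing at $t=0$ and the $\frac\alpha2$-time Hölder regularity. Third, $\Phi(\Omega^+_{\omega,\tilde\delta})\subseteq\Omega[\phi]$, so that the interior trace $v^+$ is indeed the one being pulled back; this is exactly the defining property of $\mathcal{A}^*_{\overline{\Omega_{\omega,\tilde\delta}}}$ in Lemma \ref{lemma Omega tub}(iii).
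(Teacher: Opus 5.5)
Your proposal is correct. The paper gives no proof of its own and simply imports the statement from \cite[Thm.\,5.4]{DaLu23}, and your derivation from Theorem \ref{thm dependence V Phi W Phi} is essentially the route by which that boundary-operator result is obtained from the tubular-neighbourhood one: you localise at an arbitrary $\tilde\phi$ via the extension operator $\mathbf{E}$ of Lemma \ref{lemma extension operator}, take the trace on $[0,T]\times\partial\Omega$ for (i), and for (ii) combine the interior jump relation with $\nabla v^+_{\Omega[\phi]}[\mu\circ(\phi^T)^{(-1)}]\circ\Phi^T=(D\Phi)^{-\top}\nabla V^+_{\Phi,\partial\Omega}[\mu]$ and the analyticity of $\nu_\phi\circ\phi$, with all the auxiliary facts you invoke (bounded traces and multipliers, analytic inversion in $C^{0,\alpha}$, locality of $C^\infty$ regularity) holding as stated.
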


\section{Shape analysis of the energy: abstract result}\label{sec: abstract result}

Let $\Omega^o$ and $\Omega^i$ be as in \eqref{introsetconditions}. To shorten our notation, we just set 
\[
\Omega^{i,\pm}_{\omega,\tilde{\delta}} := \left( \Omega^{i}\right)^\pm_{\omega,\tilde{\delta}},
\]
where the right-hand side is given by \eqref{eq: Omega omega delta} for $\Omega = \Omega^i$ (see Figure \ref{fig: domains Omega^o,i}). 

\begin{figure}[ht!]
    \centering
    \begin{tikzpicture}

    \fill[gray!30]
      plot[smooth cycle, tension=0.8]
      coordinates {(0,0) (4,0.5) (5,2.5) (3.5,4) (1,3.5) (-0.5,2)};
    \draw[thick, black]
      plot[smooth cycle, tension=0.8]
      coordinates {(0,0) (4,0.5) (5,2.5) (3.5,4) (1,3.5) (-0.5,2)};

    \def\inner{
      (1,1) (3,1.2) (3.5,2.2) (2.5,3) (1.2,2.5)
    }
    \def\innerdelta{
      (1.2,1.2) (2.8,1.35) (3.3,2.15) (2.45,2.85) (1.3,2.35)
    }
    \def\outerdelta{
      (0.8,0.8) (3.25,1.1) (3.65,2.3) (2.7,3.15) (1.1,2.7)
    }

    \fill[gray!60, even odd rule]
      plot[smooth cycle, tension=0.8]
      coordinates \inner 
      plot[smooth cycle, tension=0.8]
      coordinates \outerdelta; 

    \fill[white]
      plot[smooth cycle, tension=0.8]
      coordinates \inner;

    \fill[gray!95, even odd rule]
      plot[smooth cycle, tension=0.8]
      coordinates \inner
      plot[smooth cycle, tension=0.8]
      coordinates \innerdelta;

    \draw[thick, black]
      plot[smooth cycle, tension=0.8]
      coordinates \inner;

    \node at (4.2,3) {\tiny$\Omega^o$};
    \node at (1.5,1.6) {\tiny$\Omega^i$};
    \node at (3.6,1) {\tiny$\Omega^{i,-}_{\omega,\delta}$};
    \node at (2.3,2.6) {\tiny$\Omega^{i,+}_{\omega,\delta}$};

    \end{tikzpicture}
    \caption{The neighbourhoods $\Omega^{i,\pm}_{\omega,\tilde{\delta}}$ of the domain $\Omega^i$.}
    \label{fig: domains Omega^o,i}
\end{figure} 

In order to prove the abstract result on the dependence of the energy upon the perturbation parameter we need a technical lemma, which provides the existence of a regular diffeomorphism that maps $\overline{\Omega^o}\setminus\Omega^i$ onto $\overline{\Omega^o} \setminus \Omega^i[\phi]$ which depends real analytically on $\phi$: it is built upon a suitable extension to the whole domain $\overline{\Omega^o}$ of the extension $\mathbf{E}[\phi]$ provided by Lemma \ref{lemma extension operator}.

\begin{lemma}\label{lemma diffeo}
    Let $\omega,\delta_{\Omega^i}$ be as in Lemma \ref{lemma Omega tub} for $\Omega = \Omega^i$ and for $\tilde{\phi}_0 = \phi_0 =\mathrm{id}_{\partial\Omega^i}$. Then, there exist an open neighborhood $\mathcal{W}^\ast_0$ of $\phi_0$ in $\mathcal{A}^{\Omega^o}_{\partial\Omega^i}$, positive numbers
    \[
    0<\rho_1<\rho_2<\tilde\delta<\delta_{\Omega^i},
    \]
    and a real analytic extension operator $\Psi$ from $\mathcal{W}^\ast_0$ to $C^{1,\alpha}(\overline{\Omega^o}, \R^n)$ which takes $\phi$ to $\Psi[\phi]$, such that the following properties hold:
    \begin{itemize}
        \item[(i)] $\Psi[\phi]$ is a $C^{1,\alpha}$-diffeomorphism of $\overline{\Omega^o}$ onto itself, for all $\phi \in \mathcal{W}^\ast_0$.
        \item[(ii)] $\Psi[\phi] = \mathbf{E}[\phi]$ on $\overline{\Omega^i_{\omega,\rho_1}}$ and $\Psi[\phi] = \mathrm{id}_{\overline{\Omega^o}}$ on $\overline{\Omega^o} \setminus \Omega^i_{\omega,\rho_2}$, for all $\phi \in \mathcal{W}^\ast_0$.
        \item[(iii)] $\Psi[\phi](\Omega^i) = \Omega^i[\phi]$ and $\Psi[\phi](\Omega^o\setminus\overline{\Omega^i}) = \Omega^o\setminus\overline{\Omega^i[\phi]}$, for all $\phi \in \mathcal{W}^\ast_0$.
    \end{itemize}
\end{lemma}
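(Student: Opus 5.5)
The construction is a cut-off interpolation between the identity and the extension operator. Apply Lemma \ref{lemma extension operator} with $\Omega=\Omega^i$ and $\tilde\phi=\phi_0$, and use Remark \ref{remark Phi_0}. This gives $\tilde\delta\in(0,\delta_{\Omega^i})$, a neighborhood $\tilde{\mathcal W}$ of $\phi_0$, and a real analytic operator $\mathbf E$ with $\mathbf E[\phi_0]=\mathrm{id}_{\overline{\Omega^i_{\omega,\tilde\delta}}}$ and $\mathbf E[\phi](\overline{\Omega^i_{\omega,\tilde\delta}})\subset\Omega^o$. Fix $0<\rho_1<\rho_2<\tilde\delta$ and choose a cut-off $\chi\in C^\infty_c(\R^n)$ with $0\le\chi\le1$, $\chi=1$ on $\overline{\Omega^i_{\omega,\rho_1}}$, and $\operatorname{supp}\chi\subset\Omega^i_{\omega,\rho_2}$. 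Such a $\chi$ exists because $\overline{\Omega^i_{\omega,\rho_1}}$ is a compact subset of the open set $\Omega^i_{\omega,\rho_2}$, which follows from the tubular structure in Lemma \ref{lemma Omega tub}. Then define
\begin{equation*}
\Psi[\phi](x):=x+\chi(x)\bigl(\mathbf E[\phi](x)-x\bigr)\quad x\in\overline{\Omega^i_{\omega,\tilde\delta}},\qquad \Psi[\phi](x):=x \quad\text{otherwise}.
\end{equation*}
$\Psi[\phi]$ is of class $C^{1,\alpha}$ on $\overline{\Omega^o}$ because the two definitions agree near $\partial\Omega^i_{\omega,\tilde\delta}$.

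The map $\phi\mapsto\Psi[\phi]$ is the composition of the real analytic $\mathbf E$, restriction, multiplication by the fixed smooth function $\chi$, extension by zero, and addition of the constant $\mathrm{id}$. All of these except $\mathbf E$ are affine continuous, so $\Psi$ is real analytic into $C^{1,\alpha}(\overline{\Omega^o},\R^n)$. Property (ii) holds by construction, and $\Psi[\phi_0]=\mathrm{id}_{\overline{\Omega^o}}$.

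For (i), the key estimate is
\begin{equation*}
\|\Psi[\phi]-\mathrm{id}\|_{C^{1}}\le C\|\mathbf E[\phi]-\mathbf E[\phi_0]\|_{C^{1,\alpha}},
\end{equation*}
which is small near $\phi_0$ by continuity of $\mathbf E$. Shrink the neighborhood so that $\|D\Psi[\phi]-I_n\|_\infty<1/2$. This gives local invertibility of $D\Psi[\phi]$.

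Injectivity needs care, because $\Omega^o$ need not be convex, so the usual Lipschitz argument on segments does not apply directly. I would work locally: the set where $\Psi[\phi]\neq\mathrm{id}$ lies in the compact set $K=\operatorname{supp}\chi$, which has positive distance $d$ to $\partial\Omega^o$. For $\|\Psi[\phi]-\mathrm{id}\|_\infty<d/4$, suppose $\Psi[\phi](x)=\Psi[\phi](y)$. Then $|x-y|<d/2$. A Lipschitz argument on the segment $[x,y]$, which stays inside the $d$-neighborhood of $K\subset\Omega^o$ whenever one endpoint lies near $K$, then gives $|x-y|\le\frac12|x-y|$, so $x=y$. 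If both points lie far from $K$, then $\Psi[\phi]$ is the identity there and injectivity is trivial.

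Surjectivity onto $\overline{\Omega^o}$ follows from a degree or invariance of domain argument. $\Psi[\phi]$ is an injective local homeomorphism that equals the identity near $\partial\Omega^o$, so $\Psi[\phi](\Omega^o)$ is open. It is also closed in $\Omega^o$, since $\Psi[\phi](\overline{\Omega^o})$ is compact and the boundary is fixed. Connectedness of $\Omega^o$ then gives $\Psi[\phi](\Omega^o)=\Omega^o$. The inverse is $C^{1,\alpha}$ by the inverse function theorem, using the $\alpha$-Hölder continuity of $D\Psi[\phi]$ and of matrix inversion.

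For (iii), the restriction of $\Psi[\phi]$ to $\partial\Omega^i$ is $\mathbf E[\phi]|_{\partial\Omega^i}=\phi$, so $\Psi[\phi]$ maps $\partial\Omega^i$ bijectively onto $\phi(\partial\Omega^i)$. Since $\Psi[\phi]$ is a homeomorphism of $\Omega^o$, it maps the components of $\Omega^o\setminus\partial\Omega^i$ onto the components of $\Omega^o\setminus\phi(\partial\Omega^i)$. There are exactly two on each side, namely $\Omega^i$ and $\Omega^o\setminus\overline{\Omega^i}$ on the reference side, and $\Omega^i[\phi]$ and $\Omega^o\setminus\overline{\Omega^i[\phi]}$ on the perturbed side. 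Connectedness of $\Omega^o\setminus\overline{\Omega^i}$ uses the connected exteriors in \eqref{introsetconditions}. To decide which component goes where, I would use that $\Psi[\phi]=\mathrm{id}$ near $\partial\Omega^o$, so the outer annulus must go to the component touching $\partial\Omega^o$. Equivalently, Lemma \ref{lemma Omega tub}(iii) gives $\mathbf E[\phi](\Omega^{i,+}_{\omega,\tilde\delta})\subseteq\Omega^i[\phi]$. This yields (iii).

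Finally, $\mathcal W^\ast_0$ is the intersection of $\tilde{\mathcal W}\cap\mathcal A^{\Omega^o}_{\partial\Omega^i}$ with the smallness neighborhood from the estimate above. I expect the main obstacle to be the global injectivity and surjectivity argument in (i) for non-convex $\Omega^o$.
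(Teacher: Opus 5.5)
Your proof is correct, and it matches the paper's construction step for step except at global injectivity, the point you flagged, where your route is genuinely different. The shared steps are:
\begin{itemize}
\item the cut-off interpolation $x+\chi(x)\bigl(\mathbf E[\phi](x)-x\bigr)$;
\item real analyticity, by composing $\mathbf E$ with an affine continuous map;
\item invertibility of $D\Psi[\phi]$ from $\|D\Psi[\phi]-I_n\|_\infty<1/2$;
\item surjectivity, by the open--closed argument in the connected set $\Omega^o$.
\end{itemize}

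The paper proves injectivity through a global bi-Lipschitz estimate. It integrates $D\Psi[\phi]-I_n$ along curves in $\overline{\Omega^o}$. This relies on the quasiconvexity bound $d_{\Omega^o}(x,z)\le C_{\Omega^o}|x-z|$ for the geodesic distance, which the paper takes as a known property of bounded connected $C^{1,\alpha}$ domains. It also needs the smallness threshold $c<\min\{1/2,1/C_{\Omega^o}\}$.

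You instead localize. Since $\Psi[\phi]-\mathrm{id}$ vanishes off $K=\operatorname{supp}\chi$, a collision $\Psi[\phi](x)=\Psi[\phi](y)$ with $x\neq y$ forces one point into $K$. The bound $\|\Psi[\phi]-\mathrm{id}\|_\infty<d/4$ then forces $|x-y|<d/2$. So the straight segment $[x,y]$ lies in the $d$-neighbourhood of $K$, hence in $\Omega^o$, and the mean value inequality gives $|x-y|\le\frac12|x-y|$.

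Your argument needs only the convexity of small balls and $d=\operatorname{dist}(K,\partial\Omega^o)>0$, so it avoids quasiconvexity of $\Omega^o$ altogether. The same $C^0$-smallness also gives $\Psi[\phi](\Omega^o)\subseteq\Omega^o$ directly. The paper's openness step uses this inclusion implicitly, and there it must be recovered by a connectedness argument.

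In exchange, the paper's route yields explicit bi-Lipschitz constants $1-cC_{\Omega^o}$ and $(1+c)C_{\Omega^o}$, uniform in $\phi\in\mathcal W^\ast_0$. Yours recovers such a bound only after an easy extra case split into near and far pairs, with a slightly stronger smallness condition.

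Two smaller differences are also sound:
\begin{itemize}
\item Choosing $\chi\in C^\infty_c$ with $\operatorname{supp}\chi\subset\Omega^i_{\omega,\rho_2}$ makes the $C^{1,\alpha}$ gluing automatic.
\item In (iii) you match the two components via $\Psi[\phi]=\mathrm{id}$ near $\partial\Omega^o$, whereas the paper uses that $\Psi[\phi](\Omega^i)$ is bounded and the bounded component of $\R^n\setminus\phi(\partial\Omega^i)$ is unique.
\end{itemize}
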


\begin{proof}
    By Lemma \ref{lemma extension operator} there exist $\tilde{\delta}\in(0,\delta_{\Omega^i})$, an open neighborhood $\tilde{\mathcal{W}}_0$ of $\phi_0$ in $\mathcal{A}^{\Omega^o}_{\partial\Omega^i}$, and a real analytic extension operator $\mathbf{E}$ from $\tilde{\mathcal{W}}_0$ to $\mathcal{A}^\ast_{\overline{\Omega^i_{\omega,\tilde{\delta}}}}$ such that $\mathbf{E}[\phi]_{|\partial\Omega^i}=\phi$ for every $\phi\in\tilde{\mathcal{W}}_0$ and $\mathbf{E}[\phi_0]=\mathrm{id}_{\overline{\Omega^i_{\omega,\tilde{\delta}}}}$ on $\overline{\Omega^i_{\omega,\tilde{\delta}}}$, see also Remark \ref{remark Phi_0}. We also assume, after possibly shrinking $\tilde{\mathcal{W}}_0$, that
    \[
        \mathbf{E}[\phi](\overline{\Omega^i_{\omega,\tilde{\delta}}})\subset \Omega^o
        \qquad \forall \phi\in\tilde{\mathcal{W}}_0.
    \]
    Fix $0<\rho_1<\rho_2<\tilde\delta$ and $\chi\in C^{1,\alpha}(\overline{\Omega^o})$ such that
    \[
    \chi=1 \text{ on } \overline{\Omega^i_{\omega,\rho_1}},
    \qquad
    \chi=0 \text{ on } \overline{\Omega^o}\setminus \Omega^i_{\omega,\rho_2},
    \qquad
    0\leq\chi\leq 1 \text{ on } \overline{\Omega^o}.
    \]
    For $\phi\in\tilde{\mathcal{W}}_0$, we define
    \begin{equation*}
        \Psi[\phi](x) :=
        \begin{cases}
        x & \text{for } x\in \overline{\Omega^o}\setminus\Omega^i_{\omega,\tilde\delta},\\
        x+\chi(x)\bigl(\mathbf{E}[\phi](x)-x\bigr) & \text{for } x\in \overline{\Omega^i_{\omega,\tilde\delta}}.
        \end{cases}
    \end{equation*}
    Then
    \begin{equation}\label{eq Psi phi 2}
        \Psi[\phi](x)=
        \begin{cases}
        x & \text{for } x\in \overline{\Omega^o}\setminus\Omega^i_{\omega,\rho_2},\\
        x+\chi(x)\bigl(\mathbf{E}[\phi](x)-x\bigr) & \text{for } x\in \overline{\Omega^i_{\omega,\rho_2}}\setminus \Omega^i_{\omega,\rho_1}\\
        \mathbf{E}[\phi](x), & \text{for } x\in \overline{\Omega^i_{\omega,\rho_1}}.
        \end{cases}
    \end{equation}
    We claim that there exists an open neighborhood $\mathcal{W}^\ast_0$ of $\tilde{\phi}_0$ in $\mathcal{A}^{\Omega^o}_{\partial\Omega^i}$ such that the previous map defines a diffeomorphism of $\overline{\Omega^o}$ into itself and $\Psi$ is the real analytic extension operator. For each fixed $\phi \in \tilde{\mathcal{W}}_0$ and for each $x\in \overline{\Omega^o}$ the Jacobian matrix $D\Psi[\phi](x) \in \R^{n\times n}$ is given by
    \begin{equation*}
        D \Psi[\phi](x)=
        \begin{cases}
         I_n & x \in \overline{\Omega^o} \setminus \Omega^i_{\omega,\rho_2},
         \\
         I_n +\chi(x)\bigl(D \mathbf{E}[\phi](x) - I_n \bigr) + \nabla \chi(x) \otimes \bigl(\mathbf{E}[\phi](x) - x \bigr)  & x \in \overline{\Omega^i_{\omega,\rho_2}} \setminus \Omega^i_{\omega,\rho_1},
         \\
         D \mathbf{E}[\phi](x) & x \in \overline{\Omega^i_{\omega,\rho_1}},
        \end{cases}
    \end{equation*}
    where $I_n$ denotes the identity matrix in $\R^{n\times n}$ and $\otimes$ denotes the tensor product of $\R^n$.
    
    In particular, $\Psi[\phi]\in C^{1,\alpha}(\overline{\Omega^o},\R^n)$. In fact, the continuity of $\Psi[\phi]$ and of $D\Psi[\phi]$ across the interfaces follows from the choice of $\chi$ and from the identities $\chi=0$ on $\partial\Omega^i_{\omega,\rho_2}$ and $\chi=1$ on $\partial\Omega^i_{\omega,\rho_1}$.

    Moreover, by the real analyticity of the operator $\mathbf{E}$ from $\tilde{\mathcal{W}}_0$ to $\mathcal{A}^\ast_{\overline{\Omega^i_{\omega,\tilde{\delta}}}}$, and since $\mathbf{E}[\phi_0] = \mathrm{id}_{\overline{\Omega^i_{\omega,\tilde{\delta}}}}$, then the continuous dependence with respect to the uniform convergence of both $\mathbf{E}[\phi]$ and $D\mathbf{E}[\phi]$ yields that
    \begin{equation*}
        \| D\Psi[\phi] - I_n \|_{C^{0}(\overline{\Omega^o}, \R^{n\times n})} < \frac{1}{2} \quad \forall \phi \in \mathcal{W}^\ast_0,
    \end{equation*}
    for an open neighborhood $\mathcal{W}^\ast_0$ of ${\phi}_0$ in $\mathcal{A}^{\Omega^o}_{\partial\Omega^i}$. Then, $D\Psi[\phi](x)$ is invertible for every $x\in\overline{\Omega^o}$, since
    \[
        |D\Psi[\phi](x)\xi|\geq |\xi|-|(D\Psi[\phi](x)-I_n)\xi|\geq \frac12|\xi|.
    \]
    Thus $\Psi[\phi]$ is a local $C^{1,\alpha}$-diffeomorphism.

    We now prove global injectivity. In order to do that, we actually prove that $\Psi[\phi]$ is bi-Lipschitz. Since $\overline{\Omega^o}$ is compact, connected and of class $C^{1,\alpha}$, there exists a constant $C_{\Omega^o} \geq 1$ such that 
    \[
    d_{\Omega^o}(x,z) \leq C_{\Omega^o}|x-z| \quad \text{for all } x,z\in\overline{\Omega^o},
    \] 
    where $d_{\Omega^o}(x,z)$ is the geodesic distance in $\overline{\Omega^o}$ between $x,z \in \overline{\Omega^o}$, defined by
    \[
    d_{\Omega^o}(x,z) := \inf \{ l(\gamma) \colon \gamma \in C^1([0,1]; \overline{\Omega^o}), \gamma(0) = x, \gamma(1) = z \}, \quad l(\gamma):= \int_{0}^{1} |\gamma'(s)| \d s.
    \]
    By further shrinking $\mathcal{W}^\ast_0$ if necessary, we can assume that 
    \begin{equation}\label{ineq D Psi[phi] - I  < c}
        \| D\Psi[\phi] - I_n \|_{C^{0}(\overline{\Omega^o}, \R^{n\times n})} < c \quad \forall \phi \in \mathcal{W}^\ast_0,
    \end{equation}
    where $c \in (0,1)$ is a suitable constant such that $ c < \min \left\{\frac{1}{2}, \frac{1}{C_{\Omega^o}} \right\} $. By triangle inequality,
    \[
    \|D \Psi[\phi] \|_{C^{0}(\overline{\Omega^o}, \R^{n\times n})} \leq \| D\Psi[\phi] - I_n \|_{C^{0}(\overline{\Omega^o}, \R^{n\times n})} + \|I_n \|_{C^{0}(\overline{\Omega^o}, \R^{n\times n})} \leq 1+c.
    \]
    
    A direct application of the Fundamental Theorem of Calculus along a geodesic $\gamma$ connecting $x,z \in \overline{\Omega^o}$ provides
    \begin{equation*}
    \begin{aligned}
      &\Psi[\phi](x)-\Psi[\phi](z) = \int_{0}^{1} D \Psi[\phi](\gamma(s)) \gamma'(s) \d s ,
      \\
      &\left( \Psi[\phi] - \mathrm{id}_{\overline{\Omega^o}} \right)(x) - \left( \Psi[\phi] - \mathrm{id}_{\overline{\Omega^o}} \right)(z)  = \int_{0}^{1} \left( D \Psi[\phi]- I_n \right)(\gamma(s)) \gamma'(s) \d s .
    \end{aligned}
    \end{equation*}
    Hence, for the upper bound, we conclude that 
    \begin{equation*}
        |\Psi[\phi](x)-\Psi[\phi](z)|\leq (1+c) C_{\Omega^o} |x-z| \quad \forall x,z\in\overline{\Omega^o}.
    \end{equation*}
    On the other hand
    \begin{equation*}
        \left| \left( \Psi[\phi] - \mathrm{id}_{\overline{\Omega^o}} \right)(x) - \left( \Psi[\phi] - \mathrm{id}_{\overline{\Omega^o}} \right)(z) \right| \leq c \, C_{\Omega^o} |x-z| \quad \forall x,z\in\overline{\Omega^o},
    \end{equation*}
    and, consequently,
    \begin{equation*}
    \begin{aligned}
        |\Psi[\phi](x)-\Psi[\phi](z)| &= \left| x-z + \left( \Psi[\phi] - \mathrm{id}_{\overline{\Omega^o}} \right)(x) - \left( \Psi[\phi] - \mathrm{id}_{\overline{\Omega^o}} \right)(z) \right|
        \\
        &\geq
        |x-z| - \left| \left( \Psi[\phi] - \mathrm{id}_{\overline{\Omega^o}} \right)(x) - \left( \Psi[\phi] - \mathrm{id}_{\overline{\Omega^o}} \right)(z) \right|
        \\
        &\geq 
        \left( 1 - c  \, C_{\Omega^o} \right) |x-z| \qquad \forall x,z\in\overline{\Omega^o},
    \end{aligned}
    \end{equation*}
    where, by \eqref{ineq D Psi[phi] - I  < c}, the coefficient in the last line is strictly positive.
    
    Hence, $\Psi[\phi]$ is bi-Lipschitz and injective. It remains to verify surjectivity onto $\overline{\Omega^o}$. Since $\Psi[\phi]$ is a local diffeomorphism in $\Omega^o$, the set $\Psi[\phi](\Omega^o)$ is open in $\Omega^o$. Moreover, $\Psi[\phi](\overline{\Omega^o})$ is compact and hence closed in $\R^n$, while injectivity together with $\Psi[\phi](\partial\Omega^o)=\partial\Omega^o$ gives $\Psi[\phi](\Omega^o)=\Psi[\phi](\overline{\Omega^o})\cap\Omega^o$. Thus $\Psi[\phi](\Omega^o)$ is also closed in the connected set $\Omega^o$. Since it is nonempty, $\Psi[\phi](\Omega^o)=\Omega^o$, and consequently $\Psi[\phi](\overline{\Omega^o})=\overline{\Omega^o}$. Therefore $\Psi[\phi]$ is a global $C^{1,\alpha}$-diffeomorphism. In particular, standard regularity of inverses of $C^{1,\alpha}$-diffeomorphisms yields that the global inverse is also of class $C^{1,\alpha}$.
    
    Moreover, by definition, $\Psi[\phi]( \partial \Omega^o) = \partial\Omega^o$ and $\Psi[\phi]( \partial \Omega^i) = \phi(\partial\Omega^i)$. Furthermore, since $\Psi[\phi]$ is a homeomorphism, it maps the open connected subset $\Omega^i$ onto an open connected bounded subset of $\mathbb R^n$. Since the bounded connected component of $\mathbb R^n\setminus \phi(\partial\Omega^i)$ is unique by the Jordan-Leray separation Theorem and by definition it is $\Omega^i[\phi]$, we conclude that $\Psi[\phi](\Omega^i) = \Omega^i[\phi]$. Since $\Psi[\phi]$ maps $\overline{\Omega^o}$ onto itself, it follows that $
    \Psi[\phi](\Omega^o\setminus\overline{\Omega^i})=\Omega^o\setminus\overline{\Omega^i[\phi]}$.
    
    Finally, the map from $\mathcal{W}^\ast_0$ to $C^{1,\alpha}(\overline{\Omega^o}, \mathbb{R}^n)$ that takes $\phi$ to $\Psi[\phi]$ is real analytic since it is obtained from the real analytic map $\mathbf{E}$ by composition with the bounded linear operator from $C^{1,\alpha}(\overline{\Omega^o}, \mathbb{R}^n)$ into itself that takes a map $\Phi \in C^{1,\alpha}(\overline{\Omega^o}, \mathbb{R}^n)$ to the function of the variable $x \in \overline{\Omega^o}$ given by $x + \chi(x)(\Phi(x) - x)$, hence proving the lemma.
\end{proof}

We finally state the elementary fixed-domain integration result that will be used in the proof of our main theorem below. 

\begin{lemma}\label{lemma integral regularity}
    Let $\Omega$ be an open bounded subset of $\R^n$ of class $C^{1,\alpha}$. The map
    \[
    \mathcal{I} \colon C_0^{\frac{\alpha}{2}; \alpha}([0, T] \times \overline{\Omega}) \times C^{0, \alpha}(\overline{\Omega}) \longrightarrow C^{0}([0,T])
    \]
    that takes a pair $(v,\rho)$ to the function
    \[
    \mathcal{I}[v,\rho](t)=\int_\Omega v(t,x)^2\rho(x) \d x
    \]
    is of class $C^\infty$.
\end{lemma}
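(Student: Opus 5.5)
The idea is that $\mathcal{I}$ is a continuous symmetric bilinear map in $v$ composed with a linear map in $\rho$. That is, $\mathcal{I}[v,\rho] = B(v,v,\rho)$, where
\[
B \colon C_0^{\frac{\alpha}{2};\alpha}([0,T]\times\overline{\Omega})^2 \times C^{0,\alpha}(\overline{\Omega}) \to C^0([0,T]), \qquad B[v_1,v_2,\rho](t) := \int_\Omega v_1(t,x)v_2(t,x)\rho(x)\,\mathrm{d}x .
\]
The map $B$ is trilinear. A bounded (hence continuous) multilinear map between Banach spaces is of class $C^\infty$; in fact it is real analytic, since it is a polynomial. Its restriction to the diagonal $(v,v,\rho)$ is the composition of $B$ with the bounded linear map $(v,\rho)\mapsto(v,v,\rho)$. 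Hence $\mathcal{I}$ is $C^\infty$ as soon as $B$ is shown to be well defined and bounded.

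The plan therefore has two steps.

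First, well-definedness. For fixed $v_1,v_2,\rho$ the integrand $(t,x)\mapsto v_1v_2\rho$ is continuous and bounded on $[0,T]\times\overline{\Omega}$, and $\Omega$ has finite measure because it is bounded. Continuity of $t\mapsto B[v_1,v_2,\rho](t)$ then follows from uniform continuity of the integrand on the compact set $[0,T]\times\overline{\Omega}$, or from dominated convergence. Alternatively, the H\"older seminorm in $t$ gives the explicit estimate
\[
|B(t_1)-B(t_2)| \le C|t_1-t_2|^{\alpha/2}.
\]

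Second, boundedness:
\[
\sup_{t\in[0,T]}|B[v_1,v_2,\rho](t)| \le |\Omega|\,\|v_1\|_{C^{\frac{\alpha}{2};\alpha}}\|v_2\|_{C^{\frac{\alpha}{2};\alpha}}\|\rho\|_{C^{0,\alpha}(\overline{\Omega})},
\]
using that the sup norms are dominated by the Schauder norms. This yields continuity of $B$ with respect to the product topology.

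Finally, the differentials can be written down explicitly if desired:
\[
d\mathcal{I}[v,\rho](h,k) = 2B[v,h,\rho] + B[v,v,k],
\]
the second differential is again expressed through $B$, and the third differential is constant. All differentials of order four and higher vanish, which makes the $C^\infty$ claim transparent.

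The main obstacle is merely bookkeeping: making sure the target space is $C^0([0,T])$ with the sup norm and that continuity in $t$ holds. Both are immediate from uniform continuity of the integrand, so no real difficulty is expected. The regularity assumptions $C^{1,\alpha}$ on $\Omega$ and the H\"older regularity of $\rho$ are not needed beyond boundedness and measurability.
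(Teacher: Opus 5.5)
Your proof is correct: writing $\mathcal{I}$ as the bounded trilinear map $B$ composed with the bounded linear diagonal map $(v,\rho)\mapsto(v,v,\rho)$, together with the estimate $\|B[v_1,v_2,\rho]\|_{C^0([0,T])}\le|\Omega|\,\|v_1\|\,\|v_2\|\,\|\rho\|$, shows that $\mathcal{I}$ is a continuous cubic polynomial, hence $C^\infty$ (indeed real analytic). The paper states this lemma without proof as an elementary fact, and your argument supplies exactly the standard justification it relies on.
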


We are now ready to state and prove the main result.

\begin{proof}[Proof of Theorem \ref{thm energy dependence abstract}]
For every fixed $\phi \in \widehat{\mathcal{W}}_0$, Dominated Convergence Theorem and assumption (i) imply $e_\phi \in C^0([0,T])$. The standard energy derivation, justified by the approximation argument in \cite[Lem.\,5\,\&\,Prop.\,2]{Lu20} and by the Divergence Theorem, gives for $t\in(0,T)$
    \begin{equation*}
    \begin{aligned}
        \frac{\d{}}{\d t}e_\phi(t) &= \int_{\Omega^o \setminus \overline{\Omega^i[\phi]}} \partial_t u_\phi(t,y) \, u_\phi(t,y) \d y = \int_{\Omega^o \setminus \overline{\Omega^i[\phi]}} \Delta u_\phi(t,y) \, u_\phi(t,y) \d y 
        \\
        &= - \int_{\Omega^o \setminus \overline{\Omega^i[\phi]}} |\nabla u_\phi(t,y)|^2 \d y + \int_{\partial\Omega^o} 
        \nabla u_\phi(t,y)\cdot \nu_{\Omega^o}(y) \, u_{\phi}(t,y) \d \sigma_y 
        \\
        & \quad - \int_{\phi(\partial\Omega^i)} 
        \nabla u_\phi(t,\tilde{y}) \cdot \nu_{\Omega^i[\phi]}(\tilde{y})  \, u_{\phi}(t, \tilde{y}) \d \sigma_{\tilde{y}},
    \end{aligned}    
    \end{equation*}
    where we used the equation \eqref{princeqpertu family} solved by $u_\phi$ on $\Omega^o \setminus \overline{\Omega^i[\phi]}$ for every $\phi \in \widehat{\mathcal{W}}_0$. Lemma \ref{lemma change of variable} yields \eqref{eq: derivative of e general} for $t\in(0,T)$. Furthermore, \eqref{eq: derivative of e general} and the regularity of $u_\phi$ imply that $\frac{\d{}}{\d t}e_\phi$ has a unique continuous extension to $[0,T]$, hence $e_\phi \in C^1([0,T])$.

    We now prove the smoothness of the map $e_\phi$ upon the parameter $\phi$. To better explain our argument, we first establish smoothness considering $e_\phi$ a $C^0([0,T])$-valued map. Then, we prove smoothness upon the parameter $\phi$ of $\frac{\d{}}{\d t}e_\phi$ as a $C^0([0,T])$-valued map.

    We invoke Lemma \ref{lemma diffeo} and retain the notation introduced there. For $\phi \in \widehat{\mathcal{W}}_0$, we define
    \[
    \widehat\Omega:=\Omega^o\setminus\overline{\Omega^i},
    \qquad
    \widehat\Omega[\phi]:=\Omega^o\setminus\overline{\Omega^i[\phi]}.
    \]
    For every $\phi\in \widehat{\mathcal{W}}_0$ let us define $U_\phi$ the pulled back function on the fixed domain $\widehat\Omega$  given by \eqref{eq: U_phi := u_phi Psi phi}. By Lemma \ref{lemma diffeo}, the map $\Psi[\phi]$ is a diffeomorphism from $\widehat\Omega$ into $\widehat\Omega[\phi]$. Therefore, the standard Change of Variables Theorem yields for every $t \in [0,T]$
    \begin{equation}\label{eq: energy pull back}
        e_\phi(t)= \frac{1}{2} \int_{\widehat\Omega} U_\phi(t,x)^2\,J[\phi](x) \d x,
     \qquad
     \text{ with } J[\phi](x):=\det D\Psi[\phi](x) \text{ for } x \in \widehat\Omega.
    \end{equation}
    Notice that $J[\phi]>0$ in $\widehat\Omega$ for every $\phi \in \widehat{\mathcal{W}}_0$. Moreover, by the real analyticity of the map from $\mathcal{W}^*_0$ to $C^{1,\alpha}(\overline{\Omega^o},\R^n)$ which takes $\phi$ to $\Psi[\phi]$ and since the determinant is a polynomial function with respect to the entries of the Jacobian matrix $D \Psi[\phi]$, we conclude that
    \begin{equation}\label{eq J real anal}
            \text{the map from $\mathcal{W}^*_0$ to $C^{0,\alpha}(\overline{\Omega^o})$ which takes $\phi$ to $J[\phi]$ is real analytic.}
    \end{equation}
    Combining \eqref{eq: energy pull back}, assumption (ii), \eqref{eq J real anal} and Lemma \ref{lemma integral regularity}, we conclude that the map from $\widehat{\mathcal{W}}_0$ to $C^0([0,T])$ which takes $\phi$ to $e_\phi$ is of class $C^\infty$.

    In this final step, we describe how to obtain the statement in $C^1([0,T])$. Set
    \begin{equation*}
    \begin{split}
        R_\phi(t)
        :=
        - \int_{\Omega^o \setminus \overline{\Omega^i[\phi]}} |\nabla u_\phi(t,y)|^2 \d y 
        + \int_{\partial\Omega^o} \nabla u_\phi(t,y)\cdot \nu_{\Omega^o}(y) \, u_{\phi}(t,y) \d \sigma_y &
        \\
         \quad
        - \int_{\partial\Omega^i} \nabla u_\phi(t,\phi(y))\cdot \nu_{\Omega^i[\phi]}(\phi(y)) \, u_\phi(t,\phi(y)) \, \tilde\sigma_n[\phi](y) \d\sigma_y & \quad \text{for all } t\in [0,T].
    \end{split}        
    \end{equation*}
    We now prove that the map from $\widehat{\mathcal W}_0$ to $C^0([0,T])$ which takes $\phi$ to $R_\phi$ is of class $C^\infty$. 

    By Change of Variables Theorem, by Lemma \ref{lemma diffeo} and by \eqref{eq: U_phi := u_phi Psi phi}, we obtain 
    \[
        \int_{\Omega^o\setminus\overline{\Omega^i[\phi]}}|\nabla_y u_\phi(t,y)|^2 \d y
        =
        \int_{\widehat\Omega}\left|D\Psi[\phi](x)^{-\top}\nabla_x U_\phi(t,x)\right|^2  J[\phi](x) \d x,
    \]
    where we have used the chain rule
    \[
        \nabla_y u_\phi(t,\Psi[\phi](x))
        = D\Psi[\phi](x)^{-\top}\nabla_x U_\phi(t,x) \quad \text{for all } (t,x) \in [0,T] \times \widehat\Omega.
    \]
    By assumption (ii) and by the linear continuity of the gradient operator, the map from $\widehat{\mathcal W}_0$ to $C_0^{\frac\alpha2;\alpha}([0,T]\times\overline{\widehat\Omega})$ which takes $\phi$ to $\nabla_x U_\phi$ is of class $C^\infty$. Combining that with Lemma \ref{lemma diffeo}, one obtains that the map from $\widehat{\mathcal{W}}_0$ to $C_0^{\frac{\alpha}{2}; \alpha}([0,T] \times \overline{\widehat\Omega})$ which takes $\phi$ to the function
    \[
    [0,T] \times \overline{\widehat\Omega} \ni (t,x) \longmapsto  \left|D\Psi[\phi](x)^{-\top}\nabla_x U_\phi(t,x)\right|^2 \in \R
    \]
    is of class $C^\infty$. Hence, by \eqref{eq J real anal} and by Lemma \ref{lemma integral regularity}, one obtains that the map from $\widehat{\mathcal{W}}_0$ to $C^0([0,T])$ which takes $\phi$ to the function of the variable $t$ given by
    \[
    \int_{\widehat{\Omega}} \left|D\Psi[\phi](x)^{-\top}\nabla_x{U}_\phi(t,x)\right|^2J[\phi](x) \d x 
    \]
    is of class $C^\infty$.

    We now treat the two boundary terms. Since $\Psi[\phi] = \mathrm{id}_{\overline{\Omega^o}}$ on $\overline{\Omega^o} \setminus \Omega^i_{\omega,\rho_2}$, one has
    \[
        u_{\phi}(t,y) = U_{\phi}(t,y), \quad \nabla_y u_\phi(t,y) = \nabla_x U_\phi(t,y)
        \quad \forall (t,y) \in [0,T] \times \overline{\Omega^o} \setminus \Omega^i_{\omega,\rho_2}.
    \]
    By assumption (ii), by the linear continuity of the gradient operator, and since the trace operator from $C_0^{\frac\alpha2;\alpha}([0,T]\times\overline{\widehat\Omega})$ to $C_0^{\frac\alpha2;\alpha}([0,T]\times\partial\Omega^o)$ is linear and continuous, we conclude that the map from $\widehat{\mathcal{W}}_0$  to $C_0^{\frac\alpha2;\alpha}([0,T]\times\partial\Omega^o)$ which takes $\phi$ to the function of variables $(t,y)$ given by $\nabla u_\phi(t,y)\cdot\nu_{\Omega^o}(y) \, u_{\phi}(t,y)$ is of class $C^\infty$. Hence, the map from $\widehat{\mathcal{W}}_0$ to $C^0([0,T])$ which takes $\phi$ to the function of the variable $t$ given by
    \[
    \int_{\partial\Omega^o} \nabla u_\phi(t,y)\cdot\nu_{\Omega^o}(y) \, u_{\phi}(t,y) \d\sigma_y
    \]
    is of class $C^\infty$.

    Finally, using the fact that $\Psi[\phi]=\mathbf{E}[\phi]$ on $\overline{\Omega^i_{\omega,\rho_1}}$ we have 
    \[
        u_{\phi}(t,\mathbf{E}[\phi](y)) = U_{\phi}(t,y),\quad \nabla_y u_\phi(t,\mathbf{E}[\phi](y))
        =D\Psi[\phi](y)^{-\top}\nabla_x U_\phi(t,y)
        \quad \forall (t,y) \in [0,T] \times \overline{\Omega^i_{\omega,\rho_1}}.
    \]
    Then, by $\mathbf{E}[\phi]_{|\partial\Omega^i}=\phi$, by Lemmas \ref{lemma change of variable} \& \ref{lemma diffeo}, by assumption (ii), by the linear continuity of the gradient operator, and since the trace operator from $C_0^{\frac\alpha2;\alpha}([0,T]\times\overline{\widehat\Omega})$ to $C_0^{\frac\alpha2;\alpha}([0,T]\times\partial\Omega^i)$ is linear and continuous, we conclude that the map from $\widehat{\mathcal{W}}_0$ to $C^0([0,T])$ which takes $\phi$ to the function of the variable $t$ given by
    \[
    \int_{\partial\Omega^i}\nabla u_\phi(t,\phi(y))\cdot\nu_{\Omega^i[\phi]}(\phi(y)) \, u_\phi(t,\phi(y)) \, \tilde\sigma_n[\phi](y) \d\sigma_y
    \]
    is of class $C^\infty$.

    We have shown that $\phi\mapsto R_\phi$ is $C^\infty$ from $\widehat{\mathcal{W}}_0$ to $C^0([0,T])$. Since $e_\phi'=R_\phi$ on $[0,T]$ and $e_\phi(0)=0$, we have
    \[
        e_\phi(t)=\int_0^t R_\phi(s) \d s \qquad \forall t\in[0,T].
    \]
    Therefore, the proof is complete.
\end{proof}

\section{Application to boundary value problems}\label{sec: application}
This section is devoted to showing how Theorem \ref{thm energy dependence abstract} can be applied to the solutions of different parabolic boundary value problems. To do that, we first introduce the two problems and we recall some results by \cite{DaLuMoMu24.2} and by \cite{DaLuMu25} on the existence and shape analysis of the solutions of these perturbed problems. We start by recalling a representation result from \cite[Lem.\,1]{DaLuMoMu24.2} (cf. Theorem \ref{thmsl}\,(iv)).

\begin{lemma}\label{lemma rappr}
    Let $\Omega$ be a bounded open connected subset of $\mathbb{R}^n$ of class $C^{1,\alpha}$, such that $\Omega^-$ is connected and $\overline{\Omega}\subseteq\Omega^o$. Then, the map from $C_0^{\frac{\alpha}{2}; \alpha}([0,T] \times \partial\Omega^o) \times C_0^{\frac{\alpha}{2}; \alpha}([0,T] \times \partial \Omega)$ to the space 
    \begin{equation*}
    \mathcal{X}_{\mathrm{heat}} := \left\{u\in C_{0}^{\frac{1+\alpha}{2}; 1+\alpha}([0,T] \times (\overline{\Omega^o} \setminus \Omega)) \colon \partial_t u  - \Delta u =0 \text{ in } ]0,T] \times \Omega^o \setminus \overline{\Omega} \right\}.
    \end{equation*}
    that takes a pair $(\xi,\gamma)$ to the function $u_{\Omega^o,\Omega}[\xi,\gamma]$ defined by
    \begin{equation}\label{eq: representation single layer}
    u_{\Omega^o,\Omega}[\xi,\gamma] := \left(v^+_{\Omega^o} [\xi] + v^-_{\Omega}[\gamma] \right)_{| [0,T] \times (\overline{\Omega^o} \setminus \Omega)},
    \end{equation}
    is bijective.
\end{lemma}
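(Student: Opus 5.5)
The plan is to reduce bijectivity to the invertibility of a $2\times 2$ system of boundary integral operators built from the single layer heat potentials on the two disjoint boundaries $\partial\Omega^o$ and $\partial\Omega$.

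\emph{Well-definedness.} By Theorem \ref{thmsl}\,(ii), $v^+_{\Omega^o}[\xi]$ belongs to $C_{0}^{\frac{1+\alpha}{2}; 1+\alpha}([0,T]\times\overline{\Omega^o})$. Applying the second part of (ii) with a ball containing $\overline{\Omega^o}$, the restriction of $v^-_{\Omega}[\gamma]$ to $[0,T]\times(\overline{\Omega^o}\setminus\Omega)$ lies in the same type of space. By Theorem \ref{thmsl}\,(i), both potentials solve the heat equation away from their supports, in particular in $]0,T]\times(\Omega^o\setminus\overline{\Omega})$. Hence $u_{\Omega^o,\Omega}[\xi,\gamma]\in\mathcal{X}_{\mathrm{heat}}$, and the map is linear.

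\emph{Reduction to a boundary system.} Let $\mathcal{T}$ be the map sending $(\xi,\gamma)$ to the pair of traces of $u_{\Omega^o,\Omega}[\xi,\gamma]$ on $[0,T]\times\partial\Omega^o$ and on $[0,T]\times\partial\Omega$. In matrix form,
\[
\mathcal{T}=\begin{pmatrix} V_{\partial\Omega^o} & K_{21}\\ K_{12} & V_{\partial\Omega}\end{pmatrix},
\]
where $K_{12}[\xi]=v_{\Omega^o}[\xi]_{|[0,T]\times\partial\Omega}$ and $K_{21}[\gamma]=v_{\Omega}[\gamma]_{|[0,T]\times\partial\Omega^o}$. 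It acts from $C_0^{\frac{\alpha}{2};\alpha}([0,T]\times\partial\Omega^o)\times C_0^{\frac{\alpha}{2};\alpha}([0,T]\times\partial\Omega)$ to the corresponding product of $C_0^{\frac{1+\alpha}{2};1+\alpha}$ spaces.

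\begin{itemize}
\item By Theorem \ref{thmsl}\,(iv), the diagonal part is an isomorphism.
\item Since $\operatorname{dist}(\partial\Omega,\partial\Omega^o)>0$, the off-diagonal kernels $S_n(t-\tau,x-y)$ with $x,y$ on different boundaries are smooth, including up to $t-\tau=0$, where they vanish to infinite order. So $K_{12}$ and $K_{21}$ map $C^0$ densities boundedly into spaces of higher regularity, for instance $C^{1}$ in $t$ and $C^2$ in $x$ with zero initial value. By Ascoli--Arzel\`a they are therefore compact into $C_0^{\frac{1+\alpha}{2};1+\alpha}$.
\end{itemize}

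Thus $\mathcal{T}$ is Fredholm of index zero, and it suffices to prove that it is injective.

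\emph{Injectivity.} This is the main step. Suppose $\mathcal{T}[\xi,\gamma]=0$ and set $w:=v_{\Omega^o}[\xi]+v_{\Omega}[\gamma]$ on $[0,T]\times\R^n$; it is continuous by Theorem \ref{thmsl}\,(i). I will show that $w$ vanishes in each of the three regions and then read off the densities.
\begin{itemize}
\item In the annulus, $w$ has zero Dirichlet data on both boundaries and zero initial datum. The parabolic maximum principle gives $w=0$ on $[0,T]\times(\overline{\Omega^o}\setminus\Omega)$.
\item Inside $\Omega$, $w$ is caloric, vanishes on $[0,T]\times\partial\Omega$ by continuity of the single layer across $\partial\Omega$, and vanishes at $t=0$. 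So $w=0$ there.
\item Outside $\Omega^o$, $w$ is caloric, bounded, decays at infinity, vanishes on $\partial\Omega^o$ and at $t=0$. The maximum principle on large balls, using the Gaussian decay of the potentials, gives $w=0$.
\end{itemize}

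Now apply the jump relations of Theorem \ref{thmsl}\,(iii). The term $v_{\Omega^o}[\xi]$ is smooth across $\partial\Omega$, so the jump of the normal derivative of $w$ across $\partial\Omega$ equals $\gamma$. Since $w\equiv0$ on both sides, $\gamma=0$, and in the same way $\xi=0$. Hence $\mathcal{T}$ is an isomorphism.

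\emph{Conclusion.}
\begin{itemize}
\item Injectivity of the representation map: if $u_{\Omega^o,\Omega}[\xi,\gamma]=0$, then $\mathcal{T}[\xi,\gamma]=0$, so $(\xi,\gamma)=0$.
\item Surjectivity: given $u\in\mathcal{X}_{\mathrm{heat}}$, its boundary traces lie in the target of $\mathcal{T}$ and have zero initial value. Choose $(\xi,\gamma)=\mathcal{T}^{-1}[\text{traces of }u]$. Then $u-u_{\Omega^o,\Omega}[\xi,\gamma]$ is caloric with zero lateral and initial data, so it vanishes by the maximum principle.
\end{itemize}

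I expect the main technical point to be the rigorous justification of the exterior uniqueness and of the compactness of $K_{12}$ and $K_{21}$ in the zero-initial-value Schauder spaces. For the latter, I would first try to show that the off-diagonal operators are bounded into $C^1_0$ in time and $C^2$ in space, then use the compact embedding into $C_0^{\frac{1+\alpha}{2};1+\alpha}$.
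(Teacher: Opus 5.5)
The paper gives no proof of its own here: it imports the result from \cite[Lem.\,1]{DaLuMoMu24.2} and points to the invertibility of $V_{\partial\Omega}$ in Theorem \ref{thmsl}\,(iv) as the key input. Your argument (isomorphic diagonal blocks, compact smoothing off-diagonal blocks, Fredholm alternative, injectivity via Dirichlet uniqueness in the three regions plus the jump relations, then surjectivity by matching traces) is the standard proof of that lemma, and it is correct. The one phrase to fix is ``$C^2$ in $x$'', which is meaningless on the $C^{1,\alpha}$ surface $\partial\Omega$: run Ascoli--Arzel\`a for $v_{\Omega^o}[\xi]$ on a compact neighbourhood of $\partial\Omega$, where all its derivatives are bounded by a constant times $\|\xi\|_\infty$, and only then restrict to $\partial\Omega$ through $C^{1,\alpha}$ charts.
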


\begin{remark}
    Lemma \ref{lemma rappr} provides a representation for caloric functions on annular domains by means of sums of single layer heat potentials. It is well known that one can choose other representations, for example, sums of double layer heat potentials. As mentioned above, in this section, we establish the smoothness of the domain-to-energy map for the solution of two boundary value problems (cf. Theorems \ref{thm smooth energy robin} and \ref{thm smooth energy dir} below). The proofs are based on the general result provided by Theorem \ref{thm energy dependence abstract} and on the smooth dependence of $\phi$-pullback of layer heat potentials proven by Dalla Riva and Luzzini \cite{DaLu23}. Consequently, the smoothness of the domain-to-energy map is in fact independent of the representation chosen for the solutions.
\end{remark}

\subsection{Nonlinear mixed Robin-type perturbed problem}
Let $\Omega^o$ and $\Omega^i$ be as in \eqref{introsetconditions}.  We will consider the following assumptions on the data. 
\begin{equation}\label{introfunconditions 2} 
\begin{split}
    &\text{Let } g^o \in C_{0}^{\frac{\alpha}{2}; \alpha}([0,T] \times \partial\Omega^o), \, g^i  \in C^0([0,T]\times \overline{\Omega^o} \times\R) \text{ with } g^i(0,x,0)=0 \text{ for all } x\in \overline{\Omega^o}.
    \\
    &\text{Assume that the map } [0,T] \times \partial\Omega^i \ni (t,x) \mapsto g^i(t,\phi(x), u(t,x)) \text{ belongs to } C^{\frac{\alpha}{2}; \alpha}([0,T] \times \partial\Omega^i) 
    \\
    &\text{for all } u \in  C^{\frac{1+\alpha}{2}; 1+\alpha}([0,T] \times \partial\Omega^i) \text{ and } \phi \in \mathcal{A}^{\Omega^o}_{\partial\Omega^i}.
    \\
    &\text{Assume that } \mathcal{N}_{g^i} \text{ is of class } C^\infty \text{ from } \mathcal{A}^{\Omega^o}_{\partial\Omega^i} \times C^{\frac{1+\alpha}{2}; 1+\alpha}([0,T] \times \partial\Omega^i) \text{ to } C^{\frac{\alpha}{2}; \alpha}([0,T] \times \partial\Omega^i).
\end{split} 
\end{equation}
Here and in the sequel, we denote by $\mathcal{N}_{g^i}$ the ``Nemytskii superposition operator'', defined by
\begin{equation*}
    \mathcal{N}_{g^i}(\phi, u) (t,x) = g^i(t,\phi(x), u(t,x)) \quad \text{for all } (t,x) \in [0,T] \times \partial\Omega^i,
\end{equation*}
for all $\phi \in \mathcal{A}^{\Omega^o}_{\partial\Omega^i}$ and $u$ a function from $[0,T] \times \partial\Omega^i$ to $\R$.
We consider the problem                
\begin{equation}\label{princeqpertu 2}
\begin{cases}
    \partial_t u - \Delta u = 0 & \quad\text{in } ]0,T] \times (\Omega^o \setminus \overline{\Omega^i[\phi]}), 
    \\
    \frac{\partial}{\partial\nu_{\Omega^o}} u(t,x) = g^o(t,x)& \quad \forall (t,x)\in [0,T] \times \partial \Omega^o, 
    \\
    \frac{\partial}{\partial\nu_{\Omega^i[\phi]}} u (t,x) = g^i(t,x,u(t,x)) & \quad \forall (t,x)\in  [0,T] \times \phi(\partial\Omega^i),
    \\
    u(0,\cdot)=0 & \quad \text{in } \overline{\Omega^o} \setminus \Omega^i[\phi],
\end{cases}
\end{equation}
for $\phi \in \mathcal{A}^{\Omega^o}_{\partial\Omega^i}$. Problem \eqref{princeqpertu 2} is the perturbed version of the problem in the fixed annulus $\Omega^o \setminus \overline{\Omega^i}$, namely,
\begin{equation}\label{princeq 2}
\begin{cases}
    \partial_t u - \Delta u = 0 & \quad\text{in } ]0,T] \times (\Omega^o \setminus \overline{\Omega^i}), 
    \\
    \frac{\partial}{\partial\nu_{\Omega^o}} u(t,x) = g^o(t,x)& \quad \forall (t,x)\in [0,T] \times \partial \Omega^o, 
    \\
    \frac{\partial}{\partial\nu_{\Omega^i}} u (t,x) = g^i(t,x,u(t,x)) & \quad \forall (t,x)\in  [0,T] \times \partial \Omega^i,
    \\
    u(0,\cdot)=0 & \quad \text{in } \overline{\Omega^o} \setminus \Omega^i.
    \end{cases}
\end{equation}
To shorten our notation, we define 
\[
\mathcal{X}_0^{\frac{\alpha}{2}; \alpha} :=  C_0^{\frac{\alpha}{2}; \alpha}([0,T] \times \partial\Omega^o) \times C_0^{\frac{\alpha}{2}; \alpha}([0,T] \times \partial \Omega^i).
\]
Let $\mathcal{M}:=(\mathcal{M}_1,\mathcal{M}_2)$ be the map from $\mathcal{A}^{\Omega^o}_{\partial\Omega^i}\times\mathcal{X}_0^{\frac\alpha2;\alpha}$ to $\mathcal{X}_0^{\frac\alpha2;\alpha}$ defined by
\begin{equation}\label{eq M Robin}
\begin{aligned}
\mathcal{M}_1[\phi,\xi,\gamma]
&:=\left(\frac12I+W_{\partial\Omega^o}^*\right)[\xi]
+\nu_{\Omega^o}\cdot\nabla v^-_{\Omega^i[\phi]}
\left[\gamma\circ(\phi^T)^{(-1)}\right]_{|[0,T]\times\partial\Omega^o}
-g^o
&&\text{on }[0,T]\times\partial\Omega^o,
\\
\mathcal{M}_2[\phi,\xi,\gamma]
&:=\left(-\frac12I+W_{\phi(\partial\Omega^i)}^*\right)
\left[\gamma\circ(\phi^T)^{(-1)}\right]\circ\phi^T
\\
&\quad +(\nu_{\Omega^i[\phi]}\circ\phi)\cdot
\left(\nabla v^+_{\Omega^o}[\xi]_{|[0,T]\times\phi(\partial\Omega^i)}\circ\phi^T\right)
\\
&\quad -\mathcal{N}_{g^i} \left( \phi, v^+_{\Omega^o}[\xi]_{|[0,T]\times\phi(\partial\Omega^i)}\circ\phi^T + V_{\phi(\partial\Omega^i)} \left[\gamma\circ(\phi^T)^{(-1)}\right]\circ\phi^T \right)
&&\text{on }[0,T]\times\partial\Omega^i,
\end{aligned}
\end{equation}
for all $(\phi,\xi,\gamma)\in\mathcal{A}^{\Omega^o}_{\partial\Omega^i}\times\mathcal{X}_0^{\frac\alpha2;\alpha}$.

In the following theorem, we summarize the existence result for the perturbed problem \eqref{princeqpertu 2} under assumption \eqref{introfunconditions 2} from \cite[Sect.\,3]{DaLuMoMu24.2}, where the analysis has been carried out. We mention that, as done in \cite{DaLuMoMu24.2}, we will assume the existence of a solution 
\[
\mathbf{v}_0 \in C_{0}^{\frac{1+\alpha}{2}; 1+\alpha}([0,T] \times (\overline{\Omega^o} \setminus \Omega^i))
\]
of problem \eqref{princeq 2}. For sufficient conditions on $g^i$ ensuring the existence of such solution $\mathbf{v}_0$, we refer to the argument in \cite{DaMoMu19} (see also \cite{Mo24} and \cite{Friedman2008}). We just mention that in \cite{DaLuMoMu24.2} the proofs of points (ii) and (iii) below are based on Theorem \ref{thm dependence V phi W phi} (ii) and assumption \eqref{introfunconditions 2}.

\begin{theorem}\label{sec 4: thm recall}
Let $\mathcal{M}$ be given by \eqref{eq M Robin}. Then the following holds.
\begin{itemize}
    \item[(i)] Let $(\phi, \xi,\gamma) \in \mathcal{A}^{\Omega^o}_{\partial\Omega^i} \times \mathcal{X}_0^{\frac{\alpha}{2}; \alpha}$. Then the function
    \begin{equation*}
        \mathbf{v}_{\Omega^o,\Omega^i[\phi]}\left[\xi,\gamma \circ (\phi^T)^{(-1)} \right] := \left( v^+_{\Omega^o} [\xi] + v^-_{\Omega^i[\phi]} \left[\gamma\circ (\phi^T)^{(-1)}\right] \right)_{| [0,T] \times (\overline{\Omega^o} \setminus {\Omega^i[\phi]})}
    \end{equation*}
    (cf.\, \eqref{eq: representation single layer}) is a solution in $C_{0}^{\frac{1+\alpha}{2}; 1+\alpha}([0,T] \times (\overline{\Omega^o} \setminus \Omega^i[\phi]))$ of problem \eqref{princeqpertu 2} if and only if 
    \begin{equation*}
        \mathcal{M}[\phi,\xi,\gamma] = (0,0).
    \end{equation*}
    In particular, there exists a unique pair $(\xi_0,\gamma_0) \in \mathcal{X}_0^{\frac{\alpha}{2}; \alpha}$ such that 
    \[
    \mathbf{v}_0 =  \mathbf{v}_{\Omega^o,\Omega^i[\phi_0]}\left[\xi_0,\gamma_0 \circ (\phi_0^T)^{(-1)} \right] = \left( v^+_{\Omega^o} [\xi_0] + v^-_{\Omega^i} [\gamma_0] \right)_{| [0,T] \times (\overline{\Omega^o} \setminus {\Omega^i})}
    \]
    is the unique solution in $C_{0}^{\frac{1+\alpha}{2}; 1+\alpha}([0,T] \times (\overline{\Omega^o} \setminus \Omega^i))$ of problem \eqref{princeq 2} and 
    \[
    \mathcal{M}[\phi_0,\xi_0,\gamma_0] = (0,0).
    \]

    \item[(ii)] The map $\mathcal{M}$ from $\mathcal{A}^{\Omega^o}_{\partial\Omega^i} \times \mathcal{X}_0^{\frac{\alpha}{2}; \alpha}$ to $\mathcal{X}_0^{\frac{\alpha}{2}; \alpha}$ is of class $C^{\infty}$.

    \item[(iii)] The partial differential of $\mathcal{M}$ with respect to $(\xi,\gamma)$ evaluated at the point $(\phi_0,\xi_0,\gamma_0)$, which we denote by $\partial_{(\xi,\gamma)}\mathcal{M}[\phi_0,\xi_0,\gamma_0]$ is a homeomorphism from $\mathcal{X}_0^{\frac{\alpha}{2}; \alpha}$ into itself.

    \item[(iv)] There exist two open neighborhoods $\mathcal{Q}_0$ of $\phi_0$ in $\mathcal{A}^{\Omega^o}_{\partial\Omega^i}$ and $\mathcal{H}_0$ of $(\xi_0,\gamma_0)$ in $\mathcal{X}_0^{\frac{\alpha}{2}; \alpha}$ and a $C^{\infty}$ map 
    \[
    \Xi := (\Xi_1,\Xi_2): \mathcal{Q}_0 \to \mathcal{H}_0
    \]
    such that the set of zeros of $\mathcal{M}$ in $\mathcal{Q}_0 \times \mathcal{H}_0$ coincides with the graph of the function $\Xi$. In particular,
    \begin{equation*}
    \mathcal{M}[\phi, \Xi_1[\phi],\Xi_2[\phi]] = 0  \quad \forall\phi \in \mathcal{Q}_0, \qquad  \Xi[\phi_0]= (\Xi_1[\phi_0],\Xi_2[\phi_0])= (\xi_0,\gamma_0).
    \end{equation*}

    \item[(v)] For each $\phi \in \mathcal{Q}_0$, the function $\mathbf{v}_\phi \in C_{0}^{\frac{1+\alpha}{2}; 1+\alpha}([0,T] \times (\overline{\Omega^o} \setminus \Omega^i[\phi] ))$ given by
    \begin{equation*}
    \begin{aligned}
        \mathbf{v}_{\phi} :=& \,\mathbf{v}_{\Omega^o,\Omega^i[\phi]}\left[\Xi_1[\phi],\Xi_2[\phi]\circ (\phi^T)^{(-1)}\right]
        \\
        =&\left( v^+_{\Omega^o} [\Xi_1[\phi]] + v^-_{\Omega^i[\phi]}
        \left[\Xi_2[\phi]\circ (\phi^T)^{(-1)}\right] \right)_{| [0,T] \times (\overline{\Omega^o} \setminus {\Omega^i[\phi]})}
    \end{aligned}
    \end{equation*}
    is a solution of \eqref{princeqpertu 2} and $\mathbf{v}_{\phi_0}= \mathbf{v}_0$.
    \end{itemize}
\end{theorem}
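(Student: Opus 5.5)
The plan is to follow the Functional Analytic Approach: translate problem \eqref{princeqpertu 2} into the nonlinear system $\mathcal{M}=(0,0)$ of boundary integral equations on the fixed sets $\partial\Omega^o$ and $\partial\Omega^i$, then apply the Implicit Function Theorem around $(\phi_0,\xi_0,\gamma_0)$. Statements (iv) and (v) will then follow formally from (i)--(iii).

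\emph{Step (i).} Fix $\phi\in\mathcal{A}^{\Omega^o}_{\partial\Omega^i}$ and apply Lemma \ref{lemma rappr} with $\Omega=\Omega^i[\phi]$. It shows that $\mathbf{v}_{\Omega^o,\Omega^i[\phi]}[\xi,\gamma\circ(\phi^T)^{(-1)}]$ belongs to $C_0^{\frac{1+\alpha}{2};1+\alpha}$, vanishes at $t=0$ and is caloric in the perturbed annulus. Only the two Neumann conditions then need checking.
\begin{itemize}
\item On $\partial\Omega^o$, the jump relations of Theorem \ref{thmsl}\,(iii) for $v^+_{\Omega^o}[\xi]$ give the term $(\frac12 I+W^*_{\partial\Omega^o})[\xi]$ with the paper's sign convention. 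The normal derivative of $v^-_{\Omega^i[\phi]}$ on $\partial\Omega^o$ involves no jump, since $\partial\Omega^o$ lies away from $\phi(\partial\Omega^i)$.
\item On $\phi(\partial\Omega^i)$, the jump relation for $v^-_{\Omega^i[\phi]}$ gives $(-\frac12 I+W^*)$. The trace of $v^+_{\Omega^o}[\xi]$ is smooth there, and the Robin datum reads $g^i(t,\phi(x),u)$. Composing with $\phi^T$ yields exactly $\mathcal{M}_2=0$.
\end{itemize}
The pair $(\xi_0,\gamma_0)$ exists and is unique because the map of Lemma \ref{lemma rappr} is bijective. We apply it to the given solution $\mathbf{v}_0$; uniqueness of $\mathbf{v}_0$ is part of the standing hypothesis borrowed from \cite{DaLuMoMu24.2}.

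\emph{Step (ii).} Treat $\mathcal{M}$ term by term, writing it as a sum of compositions of $C^\infty$ maps.
\begin{itemize}
\item $(\phi,\gamma)\mapsto W^*_{\phi(\partial\Omega^i)}[\gamma\circ(\phi^T)^{(-1)}]\circ\phi^T=W^*_{\phi,\partial\Omega^i}[\gamma]$ is $C^\infty$ by Theorem \ref{thm dependence V phi W phi}\,(ii).
\item $(\phi,\gamma)\mapsto V_{\phi,\partial\Omega^i}[\gamma]$ is $C^\infty$ by Theorem \ref{thm dependence V phi W phi}\,(i), using that evaluation is bilinear and continuous.
\item The terms that are non-singular because the outer and inner boundaries are separated: $v^+_{\Omega^o}[\xi]$ and $\nabla v^+_{\Omega^o}[\xi]$ restricted to $\phi(\partial\Omega^i)$ and composed with $\phi^T$, and $\nu_{\Omega^o}\cdot\nabla v^-_{\Omega^i[\phi]}[\cdot]$ on $\partial\Omega^o$. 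After the change of variables of Lemma \ref{lemma change of variable}, these become integral operators with kernels $S_n(t-\tau,x-\phi(y))$ (or their gradients) and with $x$, $\phi(y)$ uniformly apart for $\phi$ near $\phi_0$. Such kernels are $C^\infty$ in $(x,\phi(y))$, so smoothness in $\phi$ follows by differentiating under the integral. Alternatively, one can use Theorem \ref{thm dependence V Phi W Phi} together with interior regularity.
\item The normal $\nu_{\Omega^i[\phi]}\circ\phi$ and the density $\tilde\sigma_n[\phi]$ are real analytic by Lemma \ref{lemma change of variable}.
\item The Nemytskii term is $C^\infty$ by assumption \eqref{introfunconditions 2}.
\end{itemize}

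\emph{Step (iii), the main obstacle.} Compute the differential
\[
\partial_{(\xi,\gamma)}\mathcal{M}[\phi_0,\xi_0,\gamma_0](\bar\xi,\bar\gamma) =\Bigl(\bigl(\tfrac12 I+W^*_{\partial\Omega^o}\bigr)[\bar\xi]+K_{12}[\bar\gamma],\ \bigl(-\tfrac12 I+W^*_{\partial\Omega^i}\bigr)[\bar\gamma]+K_{21}[\bar\xi]-\partial_u\mathcal{N}_{g^i}\bigl(\phi_0,\mathbf{v}_0{}_{|\partial\Omega^i}\bigr)\bigl[v^+_{\Omega^o}[\bar\xi]+V_{\partial\Omega^i}[\bar\gamma]\bigr]\Bigr).
\]
The cross terms $K_{12}$, $K_{21}$ have smooth kernels and are compact, and the operators $W^*$ are compact by Theorem \ref{thmsl}\,(v). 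The delicate term is $\partial_u\mathcal{N}_{g^i}\circ V_{\partial\Omega^i}$. My first attempt would be to show it is compact from $C_0^{\frac\alpha2;\alpha}$ into itself: $V_{\partial\Omega^i}$ gains one order, and one would then use compactness of the embedding of $C^{\frac{1+\alpha}{2};1+\alpha}$ into a slightly weaker parabolic Hölder space. If that fails, I would use the Volterra structure instead: all operators vanish at $t=0$ and have small norm on short time intervals, so a Neumann series and a time-stepping argument invert the operator.

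With compactness, the differential is a Fredholm operator of index zero, so it suffices to prove injectivity. A kernel element $(\bar\xi,\bar\gamma)$ produces, through Lemma \ref{lemma rappr}, a caloric function with zero initial data, homogeneous Neumann condition on $\partial\Omega^o$ and a linear Robin condition $\partial_\nu w=\partial_u g^i\,w$ on $\partial\Omega^i$. An energy estimate with Gronwall's inequality forces $w=0$, and the bijectivity in Lemma \ref{lemma rappr} then gives $\bar\xi=\bar\gamma=0$. The Open Mapping Theorem then yields the homeomorphism property.

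\emph{Steps (iv)--(v).} Apply the Implicit Function Theorem for $C^\infty$ maps between Banach spaces to $\mathcal{M}$ at $(\phi_0,\xi_0,\gamma_0)$, using (ii) and (iii), to obtain $\mathcal{Q}_0$, $\mathcal{H}_0$ and the map $\Xi$. Then (i) shows that $\mathbf{v}_\phi$ solves \eqref{princeqpertu 2}. Finally $\mathbf{v}_{\phi_0}=\mathbf{v}_0$ because $\Xi[\phi_0]=(\xi_0,\gamma_0)$.
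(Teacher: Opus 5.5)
Your strategy is the one the paper relies on. The paper gives no proof of its own here and defers to \cite{DaLuMoMu24.2}. There, \eqref{princeqpertu 2} is recast as $\mathcal M=(0,0)$ through Lemma~\ref{lemma rappr} and the jump relations. Smoothness of $\mathcal M$ comes from Theorem~\ref{thm dependence V phi W phi} and the assumption \eqref{introfunconditions 2} on $\mathcal N_{g^i}$, and (iv)--(v) follow from the Implicit Function Theorem.

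\textbf{A gap in (i).} One claim of the statement is left unproved. You say uniqueness of $\mathbf v_0$ is part of the standing hypothesis, but it is not: the paper only assumes \emph{existence} of a solution $\mathbf v_0$ of \eqref{princeq 2}. The statement, by contrast, asserts that $\mathbf v_0$ is the unique solution in $C_0^{\frac{1+\alpha}{2};1+\alpha}([0,T]\times(\overline{\Omega^o}\setminus\Omega^i))$. The fix uses tools you already have. Let $u_1,u_2$ be two solutions and set $w:=u_1-u_2$. The mean value theorem for the $C^\infty$ map $\mathcal N_{g^i}(\phi_0,\cdot)$ gives $g^i(t,x,u_1)-g^i(t,x,u_2)=B(t,x)\,w(t,x)$ on $[0,T]\times\partial\Omega^i$, with $B$ bounded. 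So $w$ is caloric, has zero initial datum, satisfies $\partial_{\nu_{\Omega^o}}w=0$ on $\partial\Omega^o$ and $\partial_{\nu_{\Omega^i}}w=B\,w$ on $\partial\Omega^i$. Your energy--Gronwall argument from (iii) then gives $w=0$.

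\textbf{Two further points.} First, the compactness step in (iii), which you leave hedged, needs one observation. Composing with the compact embedding $C^{\frac{1+\alpha}{2};1+\alpha}\hookrightarrow C^{\frac{\alpha}{2};\alpha}$ on $[0,T]\times\partial\Omega^i$ only helps if $\partial_u\mathcal N_{g^i}(\phi_0,(\mathbf v_0)_{|[0,T]\times\partial\Omega^i})$ extends continuously to the weaker space. It does, because it is multiplication by $b:=\partial_u\mathcal N_{g^i}(\phi_0,(\mathbf v_0)_{|[0,T]\times\partial\Omega^i})[1]\in C^{\frac{\alpha}{2};\alpha}([0,T]\times\partial\Omega^i)$. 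To see this, test the pointwise difference quotients with $h\equiv 1$; this is admissible since \eqref{introfunconditions 2} is stated on the full space, not on the $C_0$ subspace.
\begin{itemize}
\item With this identification your first attempt works: the differential is $\mathrm{diag}(\frac12 I,-\frac12 I)$ plus a compact operator, and the Volterra fallback is unnecessary.
\item The same identification also justifies the linear Robin condition $\partial_\nu w=b\,w$ that you use for injectivity.
\end{itemize}
Second, ``differentiating under the integral'' for the non-singular terms in (ii) is only a heuristic. It should be made rigorous with superposition-operator lemmas such as \cite[Lems.\,A.1--A.3]{DaLu23}, as the paper does in Step~2 of the proof of Theorem~\ref{thm smooth energy robin}.
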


We now recall that suitable restriction of the family of functions $\mathbf{v}_\phi$ depends smoothly on $\phi$. We provide the following adaptation of \cite[Thm.\,5]{DaLuMoMu24.2}, where we consider a domain $\Omega_{\mathtt{int}}$ contained in $\overline{\Omega^o} \setminus \overline{\Omega^i}$, possibly touching only the external boundary $\partial\Omega^o$ (see Figure \ref{fig: perturbed domains and omegaint}). Although we allow $\Omega_\mathtt{int} \cap \partial\Omega^o \neq \emptyset$, the proof is essentially the same, due to the linear continuity of the map $v^+_{\Omega^o}[\cdot]$ up to the boundary provided by Theorem \ref{thmsl}\,(iv) and the regularity of the map $\Xi_1[\cdot]$ provided by Theorem \ref{sec 4: thm recall}. 

\begin{figure}[ht!]
    \centering
    \begin{tikzpicture}

    \fill[gray!30]
      plot[smooth cycle, tension=0.8]
      coordinates {(0,0) (4,0.5) (5,2.5) (3.5,4) (1,3.5) (-0.5,2)};
    \draw[thick, black]
      plot[smooth cycle, tension=0.8]
      coordinates {(0,0) (4,0.5) (5,2.5) (3.5,4) (1,3.5) (-0.5,2)};
    
    \def\inner{
      (1,1) (3,1.2) (3.5,2.2) (2.5,3) (1.2,2.5)
    }
    
    \fill[white]
      plot[smooth cycle, tension=0.8]
      coordinates \inner;
    
    \draw[thick, black]
      plot[smooth cycle, tension=0.8]
      coordinates \inner;

    \def\omegaint{
    (4.2,2.6) (4.6,2.3) (4.7,1.9) (4.4,1.5) (4.0,1.7) (3.9,2.2)
    }

    \fill[gray!60]
      plot[smooth cycle, tension=0.8]
      coordinates \omegaint;

    \draw[thick, dashed]
      plot[smooth cycle, tension=0.8]
      coordinates \omegaint;
    
    \node at (3.8,3.3) {\tiny$\Omega^o$};
    \node at (2.3,1.9) {\tiny$\Omega^i$};
    \node at (0.9,0.75) {\tiny$\partial\Omega^i$};
    \node at (4.3,2.1) {\tiny$\Omega_{\mathtt{int}}$};
    
    \end{tikzpicture}
    \caption{The domains $\Omega^o$, $\Omega^i$, and $\Omega_{\mathtt{int}}$.}
    \label{fig: perturbed domains and omegaint}
\end{figure}

\begin{theorem}\label{sec 4: thm smootheness}
    Let $\mathcal{Q}_0$, $\Xi = (\Xi_1,\Xi_2)$ and $\mathbf{v}_\phi$ be as in Theorem \ref{sec 4: thm recall}. Let $\Omega_\mathtt{int}$ be a bounded open subset of $\Omega^o\setminus \overline{\Omega^i}$ of class $C^{1,\alpha}$ such that
    \[
    \overline{\Omega_\mathtt{int}} \subseteq  \overline{\Omega^o} \setminus \overline{\Omega^i}.
    \]
    Let $\mathcal{Q}_\mathtt{int} \subseteq \mathcal{Q}_0$ be an open neighborhood of $\phi_0$ such that  
    \begin{equation*}
        \overline{\Omega_\mathtt{int}} \subseteq \overline{\Omega^o} \setminus \overline{\Omega^i[\phi]} \quad \text{for all }\phi \in \mathcal{Q}_\mathtt{int}.
    \end{equation*}
    Then the map from $\mathcal{Q}_\mathtt{int}$ to $C_{0}^{\frac{1+\alpha}{2}; 1+\alpha}([0,T] \times \overline{\Omega_\mathtt{int}})$ that takes $\phi$ to $\left(\mathbf{v}_{\phi}\right)_{|[0,T]\times\overline{\Omega_\mathtt{int}}}$ is of class $C^\infty$.
\end{theorem}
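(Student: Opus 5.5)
The plan is to write the restriction of $\mathbf{v}_\phi$ to $[0,T]\times\overline{\Omega_\mathtt{int}}$ as the sum of two pieces, one that does not see $\phi$ except through the density, and one whose support moves with $\phi$, and to treat them separately. By Theorem \ref{sec 4: thm recall}\,(v), for every $\phi\in\mathcal{Q}_\mathtt{int}$ we have
\[
\left(\mathbf{v}_\phi\right)_{|[0,T]\times\overline{\Omega_\mathtt{int}}} = v^+_{\Omega^o}[\Xi_1[\phi]]_{|[0,T]\times\overline{\Omega_\mathtt{int}}} + v^-_{\Omega^i[\phi]}\left[\Xi_2[\phi]\circ(\phi^T)^{(-1)}\right]_{|[0,T]\times\overline{\Omega_\mathtt{int}}} .
\]

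The first term is straightforward. By Theorem \ref{thmsl}\,(ii), the map $\xi\mapsto v^+_{\Omega^o}[\xi]$ is linear and continuous from $C_0^{\frac\alpha2;\alpha}([0,T]\times\partial\Omega^o)$ to $C_0^{\frac{1+\alpha}{2};1+\alpha}([0,T]\times\overline{\Omega^o})$. Restriction to $\overline{\Omega_\mathtt{int}}\subseteq\overline{\Omega^o}$ is also linear and continuous. The map $\Xi_1$ is $C^\infty$ by Theorem \ref{sec 4: thm recall}\,(iv). Composing these gives a $C^\infty$ map in $\phi$, and this holds even when $\overline{\Omega_\mathtt{int}}$ touches $\partial\Omega^o$, because the estimate for $v^+_{\Omega^o}$ is up to the boundary.

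The second term is the main obstacle, since both the density and the support $\phi(\partial\Omega^i)$ depend on $\phi$. Here $\overline{\Omega_\mathtt{int}}$ stays at positive distance from $\phi(\partial\Omega^i)$, uniformly for $\phi$ near $\phi_0$. The step I would try first is to show that the map
\[
(\phi,\gamma)\mapsto v^-_{\Omega^i[\phi]}[\gamma\circ(\phi^T)^{(-1)}]_{|[0,T]\times\overline{\Omega_\mathtt{int}}}
\]
is $C^\infty$, and then compose it with $\Xi_2$. After the change of variables of Lemma \ref{lemma change of variable}, this function equals
\[
\int_0^t\int_{\partial\Omega^i} S_n(t-\tau,x-\phi(y))\,\gamma(\tau,y)\,\tilde\sigma_n[\phi](y)\,\d\sigma_y\,\d\tau .
\]
Since $x-\phi(y)$ avoids a neighbourhood of $0$, the kernel $(t,x,y)\mapsto S_n(t,x-\phi(y))$ is smooth with all derivatives bounded, and flat as $t\to0^+$. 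This is the standard non-singular setting of \cite{DaLu23}: one shows that $\phi\mapsto S_n(\cdot,\cdot-\phi(\cdot))$ is $C^\infty$ into a space of kernels with bounded $t$- and $x$-derivatives, by composition of a smooth function with the real analytic map $\phi\mapsto\phi$ in $C^0$. Then the bilinear integral operator is continuous into $C_0^{\frac{1+\alpha}{2};1+\alpha}([0,T]\times\overline{\Omega_\mathtt{int}})$, which gives the result. The factor $\tilde\sigma_n[\phi]$ is real analytic in $\phi$ by Lemma \ref{lemma change of variable}.

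If one prefers not to rederive this kernel regularity, there is an alternative route. Take $\Omega'$ of class $C^{1,\alpha}$ with $\overline{\Omega_\mathtt{int}}\subseteq\Omega'$ and $\overline{\Omega'}\cap\overline{\Omega^i[\phi]}=\emptyset$, and note that the map is the restriction of a caloric function on a fixed set away from the cavity; its smoothness can then be obtained by repeating the argument of \cite[Thm.\,5]{DaLuMoMu24.2}. In either route, the only change needed to allow $\Omega_\mathtt{int}\cap\partial\Omega^o\neq\emptyset$ is in the first term, which was already handled.
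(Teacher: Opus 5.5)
Your proposal is correct and follows essentially the paper's route. The paper likewise reduces to the argument of \cite[Thm.\,5]{DaLuMoMu24.2}, noting that the only new point when $\overline{\Omega_\mathtt{int}}$ touches $\partial\Omega^o$ is the term $v^+_{\Omega^o}[\Xi_1[\phi]]$, which is handled by the up-to-the-boundary continuity of $\xi\mapsto v^+_{\Omega^o}[\xi]$ and the smoothness of $\Xi_1$; the cavity term is the nonsingular-kernel computation (change of variables with $\tilde\sigma_n[\phi]$ plus the superposition lemmas of \cite{DaLu23}) that the paper writes out in the intermediate-region step of Theorem \ref{thm smooth energy robin}.

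Two small points: you are right to cite Theorem \ref{thmsl}\,(ii) for the first term (the paper's reference to item (iv) there is a slip), and the positive-distance bound should be taken locally around each $\phi\in\mathcal{Q}_\mathtt{int}$, not only near $\phi_0$, which follows by the same compactness argument.
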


We are now ready to state and prove the smoothness upon the perturbation parameter of the energy for the solution of the nonlinear mixed Robin-type problem \eqref{princeqpertu 2}.

\begin{theorem}\label{thm smooth energy robin}
    Let $\mathcal{Q}_0$, $\Xi=(\Xi_1,\Xi_2)$ and $\mathbf{v}_\phi$ be as in Theorem \ref{sec 4: thm recall}. Then there exists an open neighborhood $\widehat{\mathcal W}_0$ of $\phi_0$ in $\mathcal A_{\partial\Omega^i}^{\Omega^o}$ such that the map from $\widehat{\mathcal{W}}_0$ to $C^1([0,T])$, which takes $\phi$ to $\mathbf{e}_\phi$ given by
    \begin{equation*}
    \mathbf{e}_\phi(t) := \frac{1}{2} \int_{\Omega^o \setminus \overline{\Omega^i[\phi]}} (\mathbf{v}_\phi(t,y))^2 \d y \quad \forall t \in [0,T]
    \end{equation*}
    is of class $C^\infty$. In particular, for every $t\in(0,T)$,
    \begin{equation}\label{eq: derivative of e 2}
    \begin{aligned}
        \frac{\d{}}{\d t}\mathbf{e}_\phi(t)
        &=-\int_{\Omega^o\setminus\overline{\Omega^i[\phi]}}|\nabla \mathbf{v}_\phi(t,y)|^2 \d y
        +\int_{\partial\Omega^o} \mathbf{v}_\phi(t,y)g^o(t,y) \d\sigma_y
        \\
        &\quad -\int_{\partial\Omega^i} \mathbf{v}_\phi(t,\phi(y)) \, g^i(t,\phi(y),\mathbf{v}_\phi(t,\phi(y))) \, \tilde\sigma_n[\phi](y) \d\sigma_y,
    \end{aligned}
    \end{equation}
    and the right-hand side extends continuously to $[0,T]$.
\end{theorem}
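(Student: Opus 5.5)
The plan is to apply Theorem \ref{thm energy dependence abstract} to the family $u_\phi := \mathbf{v}_\phi$. Formula \eqref{eq: derivative of e 2} will then follow from \eqref{eq: derivative of e general} by inserting the boundary conditions of \eqref{princeqpertu 2}:
\[
\nabla\mathbf{v}_\phi\cdot\nu_{\Omega^o}=g^o \quad\text{on } \partial\Omega^o, \qquad \nabla\mathbf{v}_\phi(t,\phi(y))\cdot\nu_{\Omega^i[\phi]}(\phi(y))=g^i\bigl(t,\phi(y),\mathbf{v}_\phi(t,\phi(y))\bigr).
\]
We take $\widehat{\mathcal W}_0:=\mathcal W^\ast_0\cap\mathcal Q_0$, possibly shrunk further below. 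Assumption (i) of Theorem \ref{thm energy dependence abstract} then holds directly by Theorem \ref{sec 4: thm recall}\,(v). The whole task reduces to assumption (ii): we must show that $\phi\mapsto U_\phi=\mathbf{v}_\phi\circ\Psi[\phi]^T$ is $C^\infty$ into $C_0^{\frac{1+\alpha}{2};1+\alpha}([0,T]\times(\overline{\Omega^o}\setminus\Omega^i))$.

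To verify (ii) we split the annulus into near, intermediate and far regions relative to the cavity, as suggested in the abstract. With $\rho_1<\rho_2<\tilde\delta$ from Lemma \ref{lemma diffeo}, fix $\rho_1<\rho_1'<\rho_2'$ with $\rho_2<\rho_2'<\tilde\delta$, and cover $\overline{\Omega^o}\setminus\Omega^i$ by three closed sets with overlapping interiors:
\[
N:=\overline{\Omega^{i,-}_{\omega,\rho_1'}}, \qquad M:=\overline{\Omega^{i,-}_{\omega,\rho_2'}}\setminus\Omega^{i}_{\omega,\rho_1/2}, \qquad F:=\overline{\Omega^o}\setminus\Omega^i_{\omega,\rho_2}.
\]
Smoothness of a function in the parabolic Schauder space over the whole annulus follows from smoothness of its restrictions to finitely many overlapping closed pieces. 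We can assemble the pieces with a $C^{1,\alpha}$ partition of unity subordinate to the overlaps; multiplication by a fixed $C^{1,\alpha}$ spatial cutoff is linear and continuous on $C_0^{\frac{1+\alpha}{2};1+\alpha}$.

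On the far region $F$ we have $\Psi[\phi]=\mathrm{id}$, so $U_\phi=\mathbf v_\phi$ there. Choosing $\Omega_{\mathtt{int}}$ as a $C^{1,\alpha}$ set containing $F$ and staying away from $\overline{\Omega^i}$, and shrinking the neighborhood so that $\overline{\Omega_{\mathtt{int}}}\cap\overline{\Omega^i[\phi]}=\emptyset$, Theorem \ref{sec 4: thm smootheness} gives the required smoothness.

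On the near region $N$, where $\Psi[\phi]=\mathbf E[\phi]$ (after shrinking $\rho_1'$ to $\rho_1$, or rather choosing $N$ inside $\overline{\Omega^{i,-}_{\omega,\rho_1}}$ and letting $M$ start inside it), we write
\[
U_\phi=v^+_{\Omega^o}[\Xi_1[\phi]]\circ\mathbf E[\phi]^T+V^-_{\mathbf E[\phi],\partial\Omega^i}[\Xi_2[\phi]].
\]
The second term is $C^\infty$ by Theorem \ref{thm dependence V Phi W Phi}, combined with the real analyticity of $\mathbf E$, the smoothness of $\Xi_2$, and bilinearity of evaluation. For the first term, $v^+_{\Omega^o}[\Xi_1[\phi]]$ is $C^\infty$ in $\phi$ with values in the Schauder space on a fixed compact subset $K\subset\Omega^o$ away from $\partial\Omega^o$, and there it is $C^\infty$ in $x$ (caloric, with heat-kernel smoothness). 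Composition $(w,\Phi)\mapsto w\circ\Phi^T$ is $C^\infty$ when $w$ ranges in a space of functions with extra spatial derivatives, for example $C^{\frac{1+\alpha}{2};2+\alpha}$-type bounds on $K$, which the off-boundary potential $v^+_{\Omega^o}[\xi]$ satisfies linearly and continuously in $\xi$.

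On the intermediate region $M$, $\Psi[\phi]$ is a general $C^{1,\alpha}$ map with image in a compact subset of $\Omega^o\setminus\overline{\Omega^i[\phi]}$ staying at positive distance from $\phi(\partial\Omega^i)$ and from $\partial\Omega^o$. Both potentials are therefore smooth there, and the same composition argument applies, now also for $v^-_{\Omega^i[\phi]}[\Xi_2[\phi]\circ(\phi^T)^{(-1)}]$. Its kernel $S_n(t-\tau,\Psi[\phi](x)-\phi(y))$ is nonsingular for $x\in M$, $y\in\partial\Omega^i$, so differentiating in $\phi$ under the integral after the change of variables of Lemma \ref{lemma change of variable} gives smoothness.

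I expect the main obstacle to be these composition steps: justifying that $\phi\mapsto w_\phi\circ\Psi[\phi]^T$ is $C^\infty$ into $C^{\frac{1+\alpha}{2};1+\alpha}$, since composition is not smooth on Schauder spaces in general. I would handle it by writing the nonsingular-kernel integrals explicitly over $[0,T]\times\partial\Omega^i$ (respectively $\partial\Omega^o$) and differentiating the smooth kernel in the parameter. Alternatively, on $N$, I would avoid composition entirely by enlarging the near region to a compact set away from $\partial\Omega^o$, where the $v^+_{\Omega^o}$ term is smooth in space.
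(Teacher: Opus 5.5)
Your proposal is correct and follows the paper's proof essentially step by step. You reduce to assumption (ii) of Theorem \ref{thm energy dependence abstract}, handle the near region through Theorem \ref{thm dependence V Phi W Phi} together with the fact that $v^+_{\Omega^o}$ is evaluated away from $\partial\Omega^o$, handle the intermediate region through the explicit nonsingular representation \eqref{eq: V phi on omega med}, handle the far region through Theorem \ref{sec 4: thm smootheness}, and then insert the boundary conditions into \eqref{eq: derivative of e general}.

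The deviations are minor. You glue overlapping pieces with a $C^{1,\alpha}$ partition of unity, which is a slightly cleaner device than the paper's non-overlapping decomposition with interface matching. If you use the composition variant on the near region, $2+\alpha$ spatial regularity only gives $C^1$ dependence; to get $C^\infty$ you need bounds on $v^+_{\Omega^o}[\xi]$ in spatial $C^{k}$ norms for every $k$, which are available away from $\partial\Omega^o$. The paper instead uses the nonsingular-kernel lemmas of \cite{DaLu23}, which is your stated fallback.
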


\begin{proof}
    The result follows by Theorem \ref{thm energy dependence abstract}, once we have verified the required assumptions. The family of functions $\{\mathbf{v}_\phi\}_{\phi \in \mathcal{Q}_0}$ provided by Theorem \ref{sec 4: thm recall} satisfies assumption (i) of Theorem \ref{thm energy dependence abstract}. Hence, we just verify the existence of a neighborhood $\widehat{\mathcal W}_0$ of $\phi_0$ in $\mathcal A_{\partial\Omega^i}^{\Omega^o}$ such that for every $\phi\in \widehat{\mathcal{W}}_0$ the pulled back solution on the fixed domain $\widehat\Omega$ given by
    \begin{equation}\label{eq: V_phi v_phi Psi phi}
    \mathbf{V}_\phi(t,x):= \left( \mathbf{v}_\phi \circ \Psi[\phi]^T \right) (t,x) 
    \quad \text{with }
    \Psi[\phi]^T(t,x):=(t,\Psi[\phi](x)) \text{ for } (t,x) \in \widehat\Omega
    \end{equation}  
    satisfies assumption (ii) of Theorem \ref{thm energy dependence abstract}.

    To this aim, we just decompose the domain $\widehat\Omega$ into three regions: the near, the intermediate and the far from the cavity region. 
    
    \smallskip
    \textit{Step 1: near region}. Let $\Omega_{\mathtt{near}}:= \Omega^{i,-}_{\omega,\rho_1}$. By Lemma \ref{lemma diffeo}\,(ii) (cf.\,\eqref{eq Psi phi 2}), we know that $\Psi[\phi] = \mathbf{E}[\phi]$ on $\overline{\Omega_{\mathtt{near}}}$ for every $\phi \in \mathcal{W}^\ast_0$. Hence, by Theorem \ref{sec 4: thm recall}\,(v) along with \eqref{eq: V_phi v_phi Psi phi}, we deduce that
    \begin{equation*}
    \begin{split}
    \mathbf{V}_\phi(t,x) &=  v^+_{\Omega^o} [\Xi_1[\phi]](t, \mathbf{E}[\phi](x)) + v^-_{\Omega^i[\phi]}
    \left[\Xi_2[\phi]\circ (\phi^T)^{(-1)}\right](t, \mathbf{E}[\phi](x))
    \\
    & = v^+_{\Omega^o} [\Xi_1[\phi]](t, \mathbf{E}[\phi](x)) + V^-_{\mathbf{E}[\phi], \partial \Omega^i}
    [\Xi_2[\phi]](t,x) \quad \forall (t,x) \in [0,T] \times \overline{\Omega_{\mathtt{near}}},
    \end{split}
    \end{equation*}
    for every $\phi \in \widehat{\mathcal{W}}_0 := \mathcal{W}^\ast_0 \cap \mathcal{Q}_0$. The map that takes $\phi$ to the function of the variables $(t,x) \in [0,T] \times \overline{\Omega_{\mathtt{near}}}$ given by
    \[
    v^+_{\Omega^o} [\Xi_1[\phi]](t, \mathbf{E}[\phi](x)) = \int_{0}^{t} \int_{\partial \Omega^o} S_n(t-\tau, \mathbf{E}[\phi](x)-y) \, \Xi_1[\phi](\tau, y) \d \sigma_y \d \tau 
    \]
    is of class $C^\infty$ from $\widehat{\mathcal{W}}_0$ to $C_0^{\frac{1+\alpha}{2}; 1+\alpha}([0,T] \times \overline{\Omega_{\mathtt{near}}})$, by Lemma \ref{lemma extension operator}\,(ii), by \cite[Lems.\,A.1,\,A.2\,\&\,A.3]{DaLu23} and by Theorem \ref{sec 4: thm recall}\,(iv) on the regularity of the map $\Xi_1[\cdot]$.

    Moreover, by Lemma \ref{lemma extension operator}\,(ii), Theorem \ref{sec 4: thm recall}\,(iv) and Theorem \ref{thm dependence V Phi W Phi}, we deduce that the map from $\widehat{\mathcal{W}}_0$ to $ C_0^{\frac{1+\alpha}{2}; 1+\alpha}([0, T] \times \overline{\Omega_{\mathtt{near}}})$ that takes $\phi$ to the function $V^-_{\mathbf{E}[\phi], \partial \Omega^i}[\Xi_2[\phi]]$ is of class $C^\infty$.

    Hence, the map from $\widehat{\mathcal{W}}_0$ to $ C_0^{\frac{1+\alpha}{2}; 1+\alpha}([0, T] \times \overline{\Omega_{\mathtt{near}}})$ that takes $\phi$ to the function $(\mathbf{V}_\phi)_{|[0, T] \times \overline{\Omega_{\mathtt{near}}}}$ is of class $C^{\infty}$.

    \smallskip
    \textit{Step 2: intermediate region}. Consider now the domain $\Omega_{\mathtt{med}}:= \Omega^{i,-}_{\omega,\rho_2} \setminus \overline{\Omega^{i,-}_{\omega,\rho_1}}$. The set $\Omega_{\mathtt{med}}$ is strictly contained in $\widehat\Omega$ and has positive distance from both $\partial\Omega^o$ and $\partial\Omega^i$. By Lemma \ref{lemma diffeo}, we know that the map $\Psi$ is of class $C^\infty$ from $\mathcal{W}^\ast_0$ to $C^{1,\alpha}(\overline{\Omega^o},\R^n)$ and $\Psi[\phi_0] = \mathrm{id}_{\overline{\Omega^o}}$. Thus, there exists an open neighborhood $\widehat{\mathcal{W}}_0 \subseteq \mathcal{W}^\ast_0 \cap \mathcal{Q}_0$, such that 
    \begin{equation}\label{eq: cond omega med}
    \overline{\Psi[\phi](\Omega_{\mathtt{med}})} \subset \Omega^o \setminus \overline{\Omega^i[\phi]} \quad \text{for all } \phi \in \widehat{\mathcal{W}}_0,
    \end{equation}
    i.e. $\Psi[\phi](\Omega_{\mathtt{med}})$ has positive distance both from $\partial\Omega^o$ and $\phi(\partial\Omega^i)$ for every $\phi \in \widehat{\mathcal{W}}_0$. Assume \eqref{eq: cond omega med}. Then, by Theorem \ref{sec 4: thm recall}\,(v) and \eqref{eq: V_phi v_phi Psi phi}, for all $(t,x) \in [0,T]\times \Omega_{\mathtt{med}}$ we have 
    \begin{equation}\label{eq: V phi on omega med}
    \begin{split}
    \mathbf{V}_\phi(t,x) = & \int_0^t \int_{\partial\Omega^o} S_n (t-\tau,\Psi[\phi](x) - y) \,\Xi_1[\phi](\tau,y) \d \sigma_y \d \tau
    \\
    & + \int_0^t \int_{\partial\Omega^i} S_n (t-\tau,\Psi[\phi](x) - \phi(y) ) \, \Xi_2[\phi](\tau,y)\, \tilde{\sigma}_n[\phi](y) \d \sigma_y \d \tau.
    \end{split}
    \end{equation}

    Now, we observe that, by the property \eqref{eq: cond omega med} and by Lemma \ref{lemma diffeo}, the maps that associate a diffeomorphism $\phi$ to the functions
    \begin{equation*}
        \overline{\Omega_\mathtt{med}} \times \partial\Omega^o \ni (x,y) \longmapsto \Psi[\phi](x) - y \in \R^n \setminus\{0\}
    \end{equation*}
    and
    \begin{equation*}
        \overline{\Omega_\mathtt{med}} \times \partial\Omega^i \ni (x,y) \longmapsto \Psi[\phi](x) -\phi(y) \in \R^n \setminus\{0\}
    \end{equation*}
    are both of class $C^{\infty}$ from $\widehat{\mathcal{W}}_0$ to $C^{1,\alpha}(\overline{\Omega_\mathtt{med}} \times \partial\Omega^o, \R^n \setminus\{0\})$ and to $C^{1,\alpha}(\overline{\Omega_\mathtt{med}} \times \partial\Omega^i, \R^n \setminus\{0\})$, respectively. 
    Then, by \cite[Lems.\,A.1\,\&\,A.3]{DaLu23} regarding the regularity of the superposition operators, we deduce that the maps that take a $\phi$ to the functions
    \begin{equation*}
        [0,T] \times \overline{\Omega_\mathtt{med}} \times \partial\Omega^o \ni (t,x,y) \longmapsto S_n (t,\Psi[\phi](x) - y) \in \R 
    \end{equation*}
    and
    \begin{equation*}
        [0,T] \times \overline{\Omega_\mathtt{med}} \times \partial\Omega^i \ni (t,x,y) \longmapsto S_n (t,\Psi[\phi](x) - \phi(y)) \in \R 
    \end{equation*}
    are of class $C^{\infty}$ from $\widehat{\mathcal{W}}_0$ to $C_0^{\frac{1+\alpha}{2}; 1+\alpha}([0,T] \times (\overline{\Omega_\mathtt{med}} \times \partial\Omega^o))$ and to $C_0^{\frac{1+\alpha}{2}; 1+\alpha}([0,T] \times (\overline{\Omega_\mathtt{med}} \times \partial\Omega^i))$, respectively.

    Finally, by \cite[Lem.\,A.2]{DaLu23}, by the regularity of the maps $\Xi_1[\cdot]$ and $\Xi_2[\cdot]$ provided by Theorem \ref{sec 4: thm recall}\,(iv), and by Lemma \ref{lemma change of variable}, we deduce that the right-hand side of \eqref{eq: V phi on omega med} (i.e., $(\mathbf{V}_\phi)_{|[0, T] \times \overline{\Omega_{\mathtt{med}}}}$) defines a $C^\infty$ map from $\widehat{\mathcal{W}}_0$ to $C_0^{\frac{1+\alpha}{2}; 1+\alpha}([0,T] \times \overline{\Omega_\mathtt{med}})$.

    \smallskip
    \textit{Step 3: far region}. Consider finally the domain $\Omega_{\mathtt{far}} := \widehat\Omega \setminus\overline{\Omega^{i,-}_{\omega,\rho_2}}$. Since $\Psi[\phi] = \mathrm{id}_{\overline{\Omega^o}}$ on $\overline{\Omega_{\mathtt{far}}}$ (cf.\,\eqref{eq Psi phi 2}), we have $\mathbf{V}_\phi = \mathbf{v}_\phi$ on $\overline{\Omega_{\mathtt{far}}}$ (see \eqref{eq: V_phi v_phi Psi phi}). Moreover, we have that $\overline{\Omega_{\mathtt{far}}} \subseteq \overline{\Omega^o}\setminus\overline{\Omega^i}$, hence there exists an open neighborhood $\widehat{\mathcal{W}}_0 \subseteq \mathcal{W}^\ast_0 \cap \mathcal{Q}_0$ such that 
    \[
    \overline{\Omega_{\mathtt{far}}} \subseteq \overline{\Omega^o} \setminus \overline{\Omega^i[\phi]}  \quad \text{for all } \phi \in \widehat{\mathcal{W}}_0.
    \]
    Hence, by Theorem \ref{sec 4: thm smootheness} with $\Omega_{\mathtt{int}} = \Omega_{\mathtt{far}}$ we know that the map from $\widehat{\mathcal{W}}_0$ to $C_{0}^{\frac{1+\alpha}{2}; 1+\alpha}([0,T] \times \overline{\Omega_\mathtt{far}})$ that takes $\phi$ to $(\mathbf{V}_\phi)_{|[0,T] \times \overline{\Omega_\mathtt{far}}}$ is of class $C^\infty$.

    \smallskip
    After shrinking $\widehat{\mathcal{W}}_0$ once more if necessary, we may assume that all three preceding constructions hold on the same neighborhood. The near, intermediate, and far regions form a finite $C^{1,\alpha}$ decomposition of $\overline{\widehat\Omega}$, and the three functions considered above are restrictions of the same pull-back $\mathbf{V}_\phi$. Hence their traces and first spatial derivatives agree on the common interfaces. By the standard finite-patching estimate for parabolic Schauder norms, the map $\phi \mapsto \mathbf{V}_\phi$ is therefore of class $C^\infty$ from $\widehat{\mathcal{W}}_0$ to $C_0^{\frac{1+\alpha}{2};1+\alpha}([0,T]\times\overline{\widehat\Omega})$.

    Hence, the family of functions $\{\mathbf{v}_\phi\}_{\phi \in \widehat{\mathcal{W}}_0}$ satisfies the assumptions of Theorem \ref{thm energy dependence abstract}. In particular, \eqref{eq: derivative of e 2} follows from \eqref{eq: derivative of e general} using the boundary conditions in \eqref{princeqpertu 2}. Thus the proof is complete. 
    
\end{proof}

\subsection{Linear Dirichlet perturbed problem}
Let $\Omega^o$ and $\Omega^i$ be as in \eqref{introsetconditions}. We fix two functions
\begin{equation}\label{introfunconditions} 
f^o \in C_{0}^{\frac{1+\alpha}{2}; 1+\alpha}([0,T] \times \partial\Omega^o), \, f^i \in C_{0}^{\frac{1+\alpha}{2}; 1+\alpha}([0,T] \times \partial\Omega^i).
\end{equation}
For $\phi \in \mathcal{A}^{\Omega^o}_{\partial\Omega^i}$, we consider the following problem
\begin{equation}\label{princeqpertu}
\begin{cases}
    \partial_t u - \Delta u = 0 & \quad\text{in } ]0,T] \times (\Omega^o \setminus \overline{\Omega^i[\phi]}), 
    \\
    u(t,x) = f^o(t,x)& \quad \forall (t,x)\in [0,T] \times \partial \Omega^o, 
    \\
    u(t,x) = f^i(t,\phi^{-1}(x)) & \quad \forall (t,x)\in  [0,T] \times \phi(\partial\Omega^i),
    \\
    u(0,\cdot)=0 & \quad \text{in } \overline{\Omega^o} \setminus \Omega^i[\phi],
    \end{cases}
\end{equation}
which is the perturbed version of 
\begin{equation*}
    \begin{cases}
    \partial_t u - \Delta u = 0 & \quad\text{in } ]0,T] \times (\Omega^o \setminus \overline{\Omega^i}), 
    \\
    u (t,x) = f^o(t,x) & \quad \forall (t,x) \in [0,T] \times \partial \Omega^o,
    \\
    u (t,x) = f^i(t,x) & \quad \forall (t,x) \in [0,T] \times \partial \Omega^i,
    \\
    u (0,\cdot)=0 & \quad \text{in } \overline{\Omega^o} \setminus \Omega^i.
    \end{cases}
\end{equation*}
To shorten the notation, set
\[
\mathcal{X}_0^{\frac{1+\alpha}{2};1+\alpha}:= C_0^{\frac{1+\alpha}{2};1+\alpha}([0,T]\times\partial\Omega^o) \times C_0^{\frac{1+\alpha}{2};1+\alpha}([0,T]\times\partial\Omega^i).
\]
Let $\mathcal{M} := (\mathcal{M}_1,\mathcal{M}_2)$ be the map from $\mathcal{A}^{\Omega^o}_{\partial\Omega^i}$ to $\mathcal{L} \left(\mathcal{X}_0^{\frac{\alpha}{2}; \alpha}, \mathcal{X}_0^{\frac{1+\alpha}{2}; 1+\alpha} \right)$ defined by
\begin{equation}\label{eq M Dirichlet}
\begin{aligned}
\mathcal{M}_1[\phi](\mu,\eta) & := V_{\partial\Omega^o}[\mu] + v^-_{\Omega^i[\phi]}
\left[\eta \circ (\phi^T)^{(-1)}\right]_{|[0,T] \times \partial\Omega^o} && \text{on } [0,T] \times \partial \Omega^o,
\\
\mathcal{M}_2[\phi](\mu,\eta) & :=
V_{\phi(\partial\Omega^i)}\left[\eta \circ (\phi^T)^{(-1)}\right] \circ \phi^T + v^+_{\Omega^o}[\mu]_{|[0,T] \times \phi(\partial\Omega^i)}\circ \phi^T &&\text{on } [0,T] \times \partial \Omega^i,
\end{aligned}
\end{equation}
for all $(\phi,\mu, \eta) \in \mathcal{A}^{\Omega^o}_{\partial\Omega^i} \times \mathcal{X}_0^{\frac{\alpha}{2}; \alpha}$. We now recall the following results provided in \cite{DaLuMu25}, assuming \eqref{introfunconditions}.
In particular, the results recalled here are simplified, since we are just interested in the dependence of the solution on the perturbation parameter $\phi$ (and not also on the Dirichlet data, as in \cite{DaLuMu25}).
We just mention that Theorem \ref{thm dependence V phi W phi} (i) is central in \cite{DaLuMu25} for the proofs of points (ii) and (iii).

\begin{theorem}\label{sec 5: thm recall dir}
Let $\mathcal{M}$ be given by \eqref{eq M Dirichlet}. Then the following holds.
\begin{itemize}
    \item[(i)] Let $(\phi,\mu, \eta) \in \mathcal{A}^{\Omega^o}_{\partial\Omega^i} \times \mathcal{X}_0^{\frac{\alpha}{2}; \alpha}$. Then, the function 
    \[
    u_{\Omega^o,\Omega^i[\phi]}\left[\mu,\eta\circ (\phi^T)^{(-1)}\right] := \left( v^+_{\Omega^o} [\mu] + v^-_{\Omega^i[\phi]} \left[\eta\circ (\phi^T)^{(-1)}\right] \right)_{| [0,T] \times (\overline{\Omega^o} \setminus {\Omega^i[\phi]})} 
    \]
    (cf. \eqref{eq: representation single layer}) is a solution in $C_{0}^{\frac{1+\alpha}{2}; 1+\alpha}([0,T] \times (\overline{\Omega^o} \setminus \Omega^i[\phi]))$ of problem \eqref{princeqpertu} if and only if 
    \begin{equation*}
    \mathcal{M}[\phi](\mu, \eta) = (f^o,f^i).
    \end{equation*}
    Moreover, for every $\phi \in \mathcal{A}^{\Omega^o}_{\partial\Omega^i}$, there exists a unique pair $(\Lambda_1[\phi],\Lambda_2[\phi]) \in \mathcal{X}_0^{\frac{\alpha}{2}; \alpha}$ such that 
    the function $\mathbf{u}_\phi \in C_{0}^{\frac{1+\alpha}{2}; 1+\alpha}([0,T] \times (\overline{\Omega^o} \setminus \Omega^i[\phi] ))$ given by
    \begin{equation*}
    \begin{split}
        \mathbf{u}_{\phi} :=& \,u_{\Omega^o,\Omega^i[\phi]}\left[\Lambda_1[\phi],\Lambda_2[\phi]\circ (\phi^T)^{(-1)}\right] 
        \\
        =&\left( v^+_{\Omega^o} [\Lambda_1[\phi]] + v^-_{\Omega^i[\phi]}
        \left[\Lambda_2[\phi]\circ (\phi^T)^{(-1)}\right] \right)_{| [0,T] \times (\overline{\Omega^o} \setminus {\Omega^i[\phi]})}
    \end{split}
    \end{equation*}
    is a solution of \eqref{princeqpertu}.

    \item[(ii)] The map $\mathcal{M}$ from $\mathcal{A}^{\Omega^o}_{\partial\Omega^i}$ to $\mathcal{L} \left( \mathcal{X}_0^{\frac{\alpha}{2}; \alpha}, \mathcal{X}_0^{\frac{1+\alpha}{2}; 1+\alpha} \right)$ is of class $C^{\infty}$. In particular, for every $\phi \in \mathcal{A}^{\Omega^o}_{\partial\Omega^i}$, $\mathcal{M}[\phi]$ is linear, continuous and invertible.

    \item[(iii)] The map $\Lambda$ from $\mathcal{A}^{\Omega^o}_{\partial\Omega^i}$ to $\mathcal{X}_0^{\frac{\alpha}{2}; \alpha}$ which takes $\phi$ to $\Lambda[\phi] = (\Lambda_1[\phi],\Lambda_2[\phi])$ is of class $C^{\infty}$.
    \end{itemize}
\end{theorem}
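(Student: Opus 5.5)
The plan is to reduce everything to the two results already recalled: the representation Lemma \ref{lemma rappr} and the regularity Theorem \ref{thm dependence V phi W phi}\,(i). Then (iii) follows from (ii) by inverting operators.

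\emph{Step 1: statement (i).} Fix $\phi\in\mathcal{A}^{\Omega^o}_{\partial\Omega^i}$ and $(\mu,\eta)\in\mathcal{X}_0^{\frac\alpha2;\alpha}$.
\begin{itemize}
\item By Theorem \ref{thmsl}\,(i)--(ii), applied to $\Omega^o$ and to $\Omega^i[\phi]$ with density $\eta\circ(\phi^T)^{(-1)}$, the function $u_{\Omega^o,\Omega^i[\phi]}[\mu,\eta\circ(\phi^T)^{(-1)}]$ lies in $C_0^{\frac{1+\alpha}{2};1+\alpha}$ and is caloric in the perturbed annulus. Note that $\eta\circ(\phi^T)^{(-1)}$ is in $C_0^{\frac\alpha2;\alpha}([0,T]\times\phi(\partial\Omega^i))$ because $\phi$ is a $C^{1,\alpha}$ diffeomorphism onto its image.
\item Single layer potentials are continuous across the boundary. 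So the traces on $\partial\Omega^o$ and on $\phi(\partial\Omega^i)$ are exactly the two components of \eqref{eq M Dirichlet}, the latter pulled back by $\phi^T$. Hence the boundary conditions of \eqref{princeqpertu} are equivalent to $\mathcal{M}[\phi](\mu,\eta)=(f^o,f^i)$.
\item The initial condition holds automatically.
\item For existence and uniqueness of the pair, the maximum principle gives uniqueness of $C_0^{\frac{1+\alpha}{2};1+\alpha}$ solutions of the Dirichlet problem. Lemma \ref{lemma rappr} with $\Omega=\Omega^i[\phi]$ makes the representation bijective. So uniqueness of $(\Lambda_1[\phi],\Lambda_2[\phi])$ reduces to injectivity of $\mathcal{M}[\phi]$, and existence to its surjectivity. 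Both are handled in Step 2.
\end{itemize}

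\emph{Step 2: statement (ii).} Write $\mathcal{M}[\phi]=D[\phi]+K[\phi]$.
\begin{itemize}
\item The diagonal part is $D[\phi](\mu,\eta)=(V_{\partial\Omega^o}[\mu],\,V_{\phi,\partial\Omega^i}[\eta])$. The first entry does not depend on $\phi$. The second is $C^\infty$ in $\phi$ with values in $\mathcal{L}$ by Theorem \ref{thm dependence V phi W phi}\,(i).
\item The off-diagonal part $K[\phi]$ consists of $v^-_{\Omega^i[\phi]}[\eta\circ(\phi^T)^{(-1)}]_{|\partial\Omega^o}$ and $v^+_{\Omega^o}[\mu]\circ\phi^T$ on $\partial\Omega^i$.
\item After the change of variables of Lemma \ref{lemma change of variable}, these become integral operators with kernels $S_n(t-\tau,x-\phi(y))\tilde\sigma_n[\phi](y)$ on $\partial\Omega^o\times\partial\Omega^i$, and $S_n(t-\tau,\phi(x)-y)$ on $\partial\Omega^i\times\partial\Omega^o$.
\item Since $\phi(\partial\Omega^i)\subset\Omega^o$ has positive distance from $\partial\Omega^o$, locally uniformly in $\phi$, these kernels are nonsingular. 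As in the intermediate-region argument of Theorem \ref{thm smooth energy robin}, the superposition results \cite[Lems.\,A.1--A.3]{DaLu23} show that the kernels depend smoothly on $\phi$ in a Schauder space of kernels, smooth in $x$.
\item The pairing kernel$\times$density$\mapsto$function is bilinear and continuous. Hence $\phi\mapsto K[\phi]$ is $C^\infty$ into $\mathcal{L}(\mathcal{X}_0^{\frac\alpha2;\alpha},\mathcal{X}_0^{\frac{1+\alpha}{2};1+\alpha})$.
\end{itemize}

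For invertibility of $\mathcal{M}[\phi]$:
\begin{itemize}
\item $D[\phi]$ is an isomorphism. This uses Theorem \ref{thmsl}\,(iv) on $\partial\Omega^o$ and on $\phi(\partial\Omega^i)$, conjugated by the composition isomorphisms $\eta\mapsto\eta\circ(\phi^T)^{(-1)}$ and $w\mapsto w\circ\phi^T$ between the relevant Schauder spaces.
\item $K[\phi]$ is compact, because its smooth kernel maps bounded sets into sets bounded in a higher-order Schauder norm, and the embedding into $C_0^{\frac{1+\alpha}{2};1+\alpha}$ is compact by Arzel\`a--Ascoli.
\item So $\mathcal{M}[\phi]$ is Fredholm of index zero, and it suffices to show injectivity.
\item If $\mathcal{M}[\phi](\mu,\eta)=0$, the represented function solves \eqref{princeqpertu} with zero data. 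The maximum principle then gives that it vanishes identically, and Lemma \ref{lemma rappr} yields $\mu=\eta=0$.
\end{itemize}

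\emph{Step 3: statement (iii).} Write $\Lambda[\phi]=\mathcal{M}[\phi]^{-1}(f^o,f^i)$. Inversion is real analytic on the open set of invertible operators in $\mathcal{L}$, so composing with the $C^\infty$ map of (ii) gives $\Lambda\in C^\infty$.

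\emph{Main obstacle.} I expect the hardest point to be the operator-valued smoothness of the off-diagonal terms in Step 2, together with the compactness claim. One must check that the nonsingular-kernel estimates give genuine operator-norm control, uniformly for $\phi$ in a neighbourhood, rather than only pointwise-in-density smoothness. I would first try to get this by bounding the kernel in a $C^2$-type norm in $x$ (uniformly in $t-\tau$, using the positive distance) and then applying \cite[Lem.\,A.2]{DaLu23} to the bilinear pairing.
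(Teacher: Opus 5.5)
Your proposal is correct and follows the route the paper indicates. The paper does not prove this theorem itself: it imports it from \cite{DaLuMu25} and names Theorem \ref{thm dependence V phi W phi}\,(i) as the key input for (ii)--(iii), which is exactly the role it plays for your diagonal block. Your off-diagonal treatment mirrors Step 2 of the proof of Theorem \ref{thm smooth energy robin}, and your Fredholm, maximum-principle and Lemma \ref{lemma rappr} argument soundly supplies the invertibility that the paper leaves to the reference.

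One detail needs tidying in the compactness of $K[\phi]$. Since $\partial\Omega^o$, $\partial\Omega^i$ and $\phi$ are only $C^{1,\alpha}$, there is no higher-order Schauder space on the boundaries to embed compactly from. Instead, prove compactness on fixed compact neighbourhoods in $\R^n$ that stay at positive distance from the other boundary. There the off-diagonal potentials are bounded from $L^\infty$ into $C^3$, hence compact into $C^2$ by Arzel\`a--Ascoli. Only afterwards restrict to $\partial\Omega^o$ or compose with $\phi^T$, both of which are bounded operators into $C_0^{\frac{1+\alpha}{2};1+\alpha}$.
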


Then, analogously to Theorem \ref{sec 4: thm smootheness}, we have the following adaptation of \cite[Thm. 3.7]{DaLuMu25}.

\begin{theorem}\label{sec 5: thm smootheness dir}
    Let $\mathbf{u}_\phi$ be as in Theorem \ref{sec 5: thm recall dir}. Let $\Omega_\mathtt{int}$ be a bounded open subset of $\Omega^o\setminus \overline{\Omega^i}$ of class $C^{1,\alpha}$ such that
    \[
    \overline{\Omega_\mathtt{int}} \subseteq  \overline{\Omega^o} \setminus \overline{\Omega^i}.
    \]
    Let $\mathcal{Q}_\mathtt{int} \subseteq \mathcal{A}^{\Omega^o}_{\partial\Omega^i}$ be an open neighborhood of $\phi_0$ such that  
    \begin{equation*}
        \overline{\Omega_\mathtt{int}} \subseteq \overline{\Omega^o} \setminus \overline{\Omega^i[\phi]} \quad \text{for all }\phi \in \mathcal{Q}_\mathtt{int}.
    \end{equation*}
    Then, the map from $\mathcal{Q}_\mathtt{int}$ to $C_{0}^{\frac{1+\alpha}{2}; 1+\alpha}([0,T] \times \overline{\Omega_\mathtt{int}})$ that takes $\phi$ to $\left(\mathbf{u}_{\phi}\right)_{|[0,T]\times\overline{\Omega_\mathtt{int}}}$ is of class $C^\infty$.
\end{theorem}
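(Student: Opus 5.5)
The proof follows that of \cite[Thm.\,3.7]{DaLuMu25} and mirrors Theorem \ref{sec 4: thm smootheness}. By Theorem \ref{sec 5: thm recall dir}\,(i), for every $\phi \in \mathcal{Q}_\mathtt{int}$ and $(t,x) \in [0,T]\times\overline{\Omega_\mathtt{int}}$ we have
\[
\mathbf{u}_\phi(t,x) = v^+_{\Omega^o}[\Lambda_1[\phi]](t,x) + \int_0^t\int_{\partial\Omega^i} S_n(t-\tau, x-\phi(y))\,\Lambda_2[\phi](\tau,y)\,\tilde\sigma_n[\phi](y) \d\sigma_y \d\tau,
\]
where the second term comes from the change of variables in Lemma \ref{lemma change of variable}\,(i). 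We treat the two terms separately.

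\emph{First term.} The map $\mu \mapsto v^+_{\Omega^o}[\mu]_{|[0,T]\times\overline{\Omega_\mathtt{int}}}$ is linear and continuous from $C_0^{\frac{\alpha}{2};\alpha}([0,T]\times\partial\Omega^o)$ to $C_0^{\frac{1+\alpha}{2};1+\alpha}([0,T]\times\overline{\Omega_\mathtt{int}})$. This follows from Theorem \ref{thmsl}\,(ii) followed by restriction, and it still holds when $\overline{\Omega_\mathtt{int}}$ touches $\partial\Omega^o$. By Theorem \ref{sec 5: thm recall dir}\,(iii), $\phi\mapsto\Lambda_1[\phi]$ is $C^\infty$. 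Composing, the first term is $C^\infty$ in $\phi$.

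\emph{Second term.} Here the kernel is nonsingular because of the separation hypothesis. Fix $\phi_1 \in \mathcal{Q}_\mathtt{int}$. Since $\overline{\Omega_\mathtt{int}}$ and $\phi_1(\partial\Omega^i)$ are disjoint compact sets, the distance between them is positive. By continuity in the $C^{1,\alpha}$ (hence $C^0$) norm, on a neighborhood of $\phi_1$ we get a uniform bound $|x-\phi(y)|\geq d>0$ for $x\in\overline{\Omega_\mathtt{int}}$ and $y\in\partial\Omega^i$. Smoothness is local, so it suffices to work on such neighborhoods. On them we argue in three steps.
\begin{itemize}
\item[(a)] The map $\phi\mapsto\bigl((x,y)\mapsto x-\phi(y)\bigr)$ is affine and continuous into $C^{1,\alpha}(\overline{\Omega_\mathtt{int}}\times\partial\Omega^i,\R^n\setminus\overline{B(0,d)})$.
\item[(b)] By the superposition lemmas \cite[Lems.\,A.1\,\&\,A.3]{DaLu23}, the map $\phi\mapsto S_n(\cdot,x-\phi(y))$ is $C^\infty$ into the parabolic Schauder space on $[0,T]\times\overline{\Omega_\mathtt{int}}\times\partial\Omega^i$. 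This works because $S_n$ is smooth away from $x=0$ and vanishes to infinite order as $t\to0^+$ there, which is exactly the separation used in Step 2 of the proof of Theorem \ref{thm smooth energy robin}.
\item[(c)] \cite[Lem.\,A.2]{DaLu23} makes the time-convolution and boundary integration a continuous bilinear operation with the density $\Lambda_2[\phi]\tilde\sigma_n[\phi]$. This density is $C^\infty$ in $\phi$ by Theorem \ref{sec 5: thm recall dir}\,(iii) and Lemma \ref{lemma change of variable}\,(i), using that multiplication in $C^{0,\alpha}$ is continuous. Hence the second term is $C^\infty$ into $C_0^{\frac{1+\alpha}{2};1+\alpha}([0,T]\times\overline{\Omega_\mathtt{int}})$.
\end{itemize}

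The main obstacle is step (b), together with the regularity it feeds into step (c). One must check that composing the heat kernel with a $\phi$-dependent, uniformly nonvanishing spatial argument gives a smooth superposition operator into a space carrying the parabolic $\frac{1+\alpha}{2}$ time Hölder regularity up to $t=0$, and with vanishing initial trace. I would handle this as in \cite{DaLu23}: $S_n$ and all its derivatives are uniformly bounded and smooth on $[0,T]\times\{|z|\geq d\}$, which reduces the claim to composition of a $C^\infty$ function with $C^{1,\alpha}$ maps. Adding the two terms proves the theorem.
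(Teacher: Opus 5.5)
Your proposal is correct and follows essentially the route the paper indicates by deferring to \cite{DaLuMu25} and to Theorem \ref{sec 4: thm smootheness}: you split $\mathbf{u}_\phi$ into two terms and treat each as the paper does. The term $v^+_{\Omega^o}[\Lambda_1[\phi]]$ is handled by continuity up to $\partial\Omega^o$ composed with the smooth map $\Lambda_1$ (Theorem \ref{thmsl}\,(ii) is the right item here; the paper's pointer to (iv) appears to be a slip), and the inner single layer term is handled through the nonsingular kernel and \cite[Lems.\,A.1--A.3]{DaLu23}, exactly as in Step 2 of the proof of Theorem \ref{thm smooth energy robin}, with your localization around an arbitrary $\phi_1\in\mathcal{Q}_\mathtt{int}$ supplying the uniform separation $|x-\phi(y)|\geq d$ that the paper leaves implicit.
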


Thus, we are now ready to state the smoothness upon the perturbation parameter of the energy for the solution of the linear Dirichlet perturbed problem \eqref{princeqpertu}.

\begin{theorem}\label{thm smooth energy dir}
    Let $\mathbf{u}_\phi$ be as in Theorem \ref{sec 5: thm recall dir}. Then there exists an open neighborhood $\widehat{\mathcal W}_0$ of $\phi_0$ in $\mathcal A_{\partial\Omega^i}^{\Omega^o}$ such that the map from $\widehat{\mathcal{W}}_0$ to $C^1([0,T])$, which takes $\phi$ to $\mathbf{e}_\phi$ given by
    \begin{equation*}
    \mathbf{e}_\phi(t) := \frac{1}{2} \int_{\Omega^o \setminus \overline{\Omega^i[\phi]}} (\mathbf{u}_\phi(t,y))^2 \d y \quad \forall t \in [0,T]
    \end{equation*}
    is of class $C^\infty$. In particular, for every $t\in(0,T)$,
    \begin{equation}\label{eq: derivative of e dir}
    \begin{aligned}
        \frac{\d{}}{\d t}\mathbf{e}_\phi(t) &= -\int_{\Omega^o \setminus \overline{\Omega^i[\phi]}} |\nabla \mathbf{u}_\phi(t,y)|^2 \d y + \int_{\partial\Omega^o}
        \nabla \mathbf{u}_\phi(t,y)\cdot \nu_{\Omega^o}(y)\,f^o(t,y) \d\sigma_y
        \\
        &\quad -\int_{\partial\Omega^i} \nabla \mathbf{u}_\phi(t,\phi(y))\cdot \nu_{\Omega^i[\phi]}(\phi(y))\,f^i(t,y) \, \tilde\sigma_n[\phi](y) \d\sigma_y,
    \end{aligned}
    \end{equation}
    and the right-hand side extends continuously to $[0,T]$.
\end{theorem}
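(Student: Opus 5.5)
The plan mirrors the proof of Theorem \ref{thm smooth energy robin}: we apply Theorem \ref{thm energy dependence abstract} to the family $\{\mathbf{u}_\phi\}$ given by Theorem \ref{sec 5: thm recall dir}. Assumption (i) is immediate, since by Theorem \ref{sec 5: thm recall dir}\,(i) each $\mathbf{u}_\phi$ belongs to $C_0^{\frac{1+\alpha}{2};1+\alpha}([0,T]\times(\overline{\Omega^o}\setminus\Omega^i[\phi]))$ and is caloric in the perturbed annulus. The work is in verifying assumption (ii). Define the pull-back $\mathbf{U}_\phi:=\mathbf{u}_\phi\circ\Psi[\phi]^T$ on $\widehat\Omega$ using the diffeomorphism of Lemma \ref{lemma diffeo}. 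As before, split $\overline{\widehat\Omega}$ into near, intermediate and far regions: $\Omega_{\mathtt{near}}=\Omega^{i,-}_{\omega,\rho_1}$, $\Omega_{\mathtt{med}}=\Omega^{i,-}_{\omega,\rho_2}\setminus\overline{\Omega^{i,-}_{\omega,\rho_1}}$, and $\Omega_{\mathtt{far}}=\widehat\Omega\setminus\overline{\Omega^{i,-}_{\omega,\rho_2}}$. In each region the representation of $\mathbf{u}_\phi$ is the same, with densities $\Lambda_1[\phi],\Lambda_2[\phi]$ in place of $\Xi_1[\phi],\Xi_2[\phi]$.

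In the near region, $\Psi[\phi]=\mathbf{E}[\phi]$, so
\[
\mathbf{U}_\phi(t,x)=v^+_{\Omega^o}[\Lambda_1[\phi]](t,\mathbf{E}[\phi](x))+V^-_{\mathbf{E}[\phi],\partial\Omega^i}[\Lambda_2[\phi]](t,x).
\]
The first term is smooth in $\phi$ by \cite[Lems.\,A.1--A.3]{DaLu23}, Lemma \ref{lemma extension operator}\,(ii) and Theorem \ref{sec 5: thm recall dir}\,(iii). This term involves no singularity, because $\mathbf{E}[\phi](\overline{\Omega^i_{\omega,\tilde\delta}})$ stays at positive distance from $\partial\Omega^o$ by Remark \ref{remark Phi_0}. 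The second term is smooth by Theorem \ref{thm dependence V Phi W Phi} combined with the smoothness of $\Lambda_2$, since evaluating a $C^\infty$ operator-valued map at a $C^\infty$ argument is bilinear and hence smooth. In the intermediate region we shrink the neighborhood so that \eqref{eq: cond omega med} holds. Then the kernels $S_n(t-\tau,\Psi[\phi](x)-y)$ and $S_n(t-\tau,\Psi[\phi](x)-\phi(y))$ are nonsingular, and the argument of Step 2 in the Robin case applies verbatim, using Lemma \ref{lemma change of variable} for $\tilde\sigma_n[\phi]$. In the far region $\Psi[\phi]$ is the identity, so Theorem \ref{sec 5: thm smootheness dir} with $\Omega_{\mathtt{int}}=\Omega_{\mathtt{far}}$ gives smoothness directly, after shrinking so that $\overline{\Omega_{\mathtt{far}}}\subseteq\overline{\Omega^o}\setminus\overline{\Omega^i[\phi]}$.

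We then take $\widehat{\mathcal W}_0$ as the intersection of the finitely many neighborhoods above with $\mathcal{W}^\ast_0$. The three restrictions come from the same function $\mathbf{U}_\phi$, so by the finite-patching estimate for parabolic Schauder norms the map $\phi\mapsto\mathbf{U}_\phi\in C_0^{\frac{1+\alpha}{2};1+\alpha}([0,T]\times\overline{\widehat\Omega})$ is $C^\infty$. This verifies assumption (ii), and Theorem \ref{thm energy dependence abstract} gives the $C^\infty$ dependence of $\mathbf{e}_\phi$ in $C^1([0,T])$.

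Finally, formula \eqref{eq: derivative of e dir} follows from \eqref{eq: derivative of e general} by substituting the Dirichlet conditions of \eqref{princeqpertu}. On $\partial\Omega^o$ we have $\mathbf{u}_\phi=f^o$. On $\phi(\partial\Omega^i)$ we have $\mathbf{u}_\phi(t,\phi(y))=f^i(t,y)$ for $y\in\partial\Omega^i$, which is exactly the factor in the pulled-back boundary integral. The main obstacle is the near region, namely controlling the composite $v^+_{\Omega^o}[\Lambda_1[\phi]]\circ\mathbf{E}[\phi]^T$ in the $C^{\frac{1+\alpha}{2};1+\alpha}$ norm. I would handle it, as in the Robin case, by noting that the outer potential is real analytic (hence $C^\infty$) in $x$ on a neighborhood of $\overline{\Omega^i_{\omega,\tilde\delta}}$ and applying the superposition lemmas of \cite{DaLu23}.
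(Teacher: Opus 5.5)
Your proposal is correct and follows the paper's route. The paper's proof simply says the argument is analogous to that of Theorem~\ref{thm smooth energy robin}: it applies Theorem~\ref{thm energy dependence abstract}, verifying assumption (ii) through Theorems~\ref{sec 5: thm recall dir} and~\ref{sec 5: thm smootheness dir}, and obtains \eqref{eq: derivative of e dir} from \eqref{eq: derivative of e general} by inserting the Dirichlet data. Your near/intermediate/far verification, with $\Lambda_1[\phi],\Lambda_2[\phi]$ in place of $\Xi_1[\phi],\Xi_2[\phi]$, is exactly that analogy written out; since $\Lambda$ is smooth on all of $\mathcal{A}^{\Omega^o}_{\partial\Omega^i}$, you correctly need no analogue of $\mathcal{Q}_0$, only $\mathcal{W}^\ast_0$ and the region-specific neighborhoods.
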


\begin{proof}
    In view of the same representation provided by Lemma \ref{lemma rappr} used for both problems \eqref{princeqpertu 2} and \eqref{princeqpertu}, the proof of Theorem \ref{thm smooth energy dir} is analogous to the proof of Theorem \ref{thm smooth energy robin}. In fact, it follows by the general Theorem \ref{thm energy dependence abstract} and by using the results contained in Theorems \ref{sec 5: thm recall dir} and \ref{sec 5: thm smootheness dir}. We just mention that \eqref{eq: derivative of e dir} follows by \eqref{eq: derivative of e general} using the boundary conditions satisfied by $\mathbf{u}_\phi$ (cf. \eqref{princeqpertu}).
\end{proof}

\subsection*{Acknowledgments}

The authors are members of the ``Gruppo Nazionale per l'Analisi Matematica, la Probabilit\`a e le loro Applicazioni'' (GNAMPA) of the ``Istituto Nazionale di Alta Matematica'' (INdAM). R.\,Molinarolo is partially supported by GNAMPA of INdAM. R.\,Molinarolo acknowledges the support by the Project ``Giochi a campo medio, trasporto e ottimizzazione in sistemi auto-organizzati e machine learning'', funded by MUR, D.D. 47/2025, PNRR - Missione 4, Componente 2, Investimento 1.2 - funded by European Union NextGenerationEU, CUP B33C25000380001.
Finally, R.\,Molinarolo wishes to thank P.\,Luzzini and P.\,Musolino for fruitful discussions and kind suggestions that improved the quality of the paper. 

\bibliographystyle{plain}
\bibliography{bibliography}

\end{document}